\newcounter{ENUM}
\def\fS{{\mathfrak S}}
\def\ds{\displaystyle}
\newcommand{\bm}[1]{{\boldsymbol{#1}}}
\def\0{\bm{0}}
\def\1{\bm{1}}
\def\ba{\bm{a}}
\def\bb{\bm{b}}
\def\be{\bm{e}}
\def\f{\bm{f}}
\def\bg{\bm{g}}
\def\bi{\bm{i}}
\def\bj{\bm{j}}
\def\bp{\bm{p}}
\def\bq{\bm{q}}
\def\s{\bm{s}}
\def\t{\bm{t}}
\def\v{\bm{v}}
\def\u{\bm{u}}
\def\w{\bm{w}}
\def\x{\bm{x}}
\def\y{\bm{y}}
\def\balpha{\bm{\alpha}}
\def\Br{\operatorname{Br}}
\def\Perm{\operatorname{Perm}}
\def\Ehr{\operatorname{Ehr}}
\def\quot{\operatorname{quot}}
\def\ncone{\operatorname{ncone}}
\def\fcone{\operatorname{fcone}}
\def\lin{\operatorname{lin}}
\def\Z{{\mathbb Z}}
\def\N{{\mathbb N}}
\def\P{{\mathbb P}}
\def\Q{{\mathbb Q}}
\def\R{{\mathbb R}}
\def\C{{\mathbb C}}
\def\G{{\mathbb G}}
\def\cB{{\mathcal B}}
\def\cC{{\mathcal C}}
\def\cF{{\mathcal F}}
\def\cN{{\mathcal N}}
\def\cO{{\mathcal O}}
\def\cP{{\mathcal P}}
\def\cQ{{\mathcal Q}}
\def\cY{{\mathcal Y}}
\def\cZ{{\mathcal Z}}
\def\i{{\mathfrak i}}
\def\Br{\operatorname{Br}}
\def\conv{\mathrm{conv}}
\def\aff{\mathrm{aff}}
\def\lin{\mathrm{lin}}
\def\nvol{\mathrm{nvol}}
\def\vol{\mathrm{Vol}}
\def\relint{\mathrm{relint}}
\def\fcone{\mathrm{fcone}}
\def\tes{\operatorname{Tes}}
\def\Re{\mathfrak Re}
\newtheorem{thm}{Theorem}[subsection]
\newtheorem{thmsec}{Theorem}[section]
\newtheorem{prop}[thm]{Proposition}
\newtheorem{lem}[thm]{Lemma}
\newtheorem{cor}[thm]{Corollary}
\newtheorem{conj}[thm]{Conjecture}
\theoremstyle{definition}
\newtheorem{defn}[thm]{Definition}
\newtheorem{ques}[thm]{Question}
\newtheorem{quessec}[thmsec]{Question}
\newtheorem{ex}[thm]{Example}
\theoremstyle{remark}
\newtheorem{rem}[thm]{Remark}
\numberwithin{equation}{section}
\newcommand{\h}{h^*}
\newcommand\commentout[1]{ }
\renewcommand\subsubsection{\@startsection{subsubsection}{3}%
  \z@{.5\linespacing\@plus.7\linespacing}{-.5em}%
  {\normalfont\bfseries}}\makeatother
\def\multiset#1#2{\ensuremath{\left(\kern-.3em\left(\genfrac{}{}{0pt}{}{#1}{#2}\right)\kern-.3em\right)}}
\def\Poly{\operatorname{Poly}}
\def\PS{ {\mathcal{PS}}}
\def\KP{ {\mathcal{KP}}}
\def\CRY{ {\mathcal{CRY}}}
\def\out{\operatorname{out}}
\def\iin{\operatorname{in}}
\def\codeg{\operatorname{codeg}}
\subjclass[2010]{Primary 52B20; Secondary 05A15, 90C57}
\address{Department of Mathematics, University of California, One Shields Avenue, Davis, California 95616.}
\keywords{Ehrhart polynomial, Ehrhart positivity, polytopes}
\email{fuliu@math.ucdavis.edu}
\begin{document}
\title{On positivity of Ehrhart polynomials}
\author{Fu Liu}
\begin{abstract}
Ehrhart discovered that the function that counts the number of lattice points in dilations of an integral polytope is a polynomial. We call the coefficients of this polynomial Ehrhart coefficients, and say a polytope is Ehrhart positive if all Ehrhart coefficients are positive (which is not true for all integral polytopes). 
The main purpose of this article is to survey interesting families of polytopes that are known to be Ehrhart positive and discuss the reasons from which their Ehrhart positivity follows. We also include examples of polytopes that have negative Ehrhart coefficients and polytopes that are conjectured to be Ehrhart positive, as well as pose a few relevant questions.  

\end{abstract}

\maketitle

\section{Introduction}

A \emph{polyhedron} in the $D$-dimensional Euclidean space $\R^D$ is the solution set of a finite set of linear inequalities:
\[ P = \left\{ \x \in \R^D \ : \ \sum_{j=1}^D a_{i,j} x_j \le b_i \text{ for $i \in I$} \right\},\]
where $a_{i,j} \in \R$, $b_i \in \R$ and $I$ is a finite set of indices.
A \emph{polytope} is a bounded polyhedron.
Equivalently, a \emph{polytope} in $\R^D$ can also be defined as the convex hull of finitely many points in $\R^D.$ 
We assume readers are familiar with basic definitions such as \emph{faces} and \emph{dimensions} of polytopes as presented in \cite{zie}.
In this paper, the letter $d$ usually denotes the dimension of a polytope and $D$ denotes the dimension of the ambient space. For majority of the examples presented here, we either have $d=D$ or $d=D-1.$ 

A \emph{lattice point} or an \emph{integral point} is a point in $\Z^D.$ Counting lattice points 
inside polytopes is a fundamental and useful step in many mathematical analyses. A lot of combinatorial structures can be counted as lattice points of polytopes. For example, matchings on graphs \cite{lovaszplummer}, t-designs \cite{moura},  (semi-)magic squares \cite[Chapter 6]{beckrobins}, and linear extensions of posets \cite{stanley2poset} are all of this form. Counting lattice points not only appears in combinatorial problems, it also appears,  for instance, in the context of representation theory \cite{kirillov, schmidtbincer}, algebraic geometry \cite{fulton}, statistics \cite{diaconisgangolli, FienbergMMS}, and number theory \cite{beck, nijehuiswilf}.

One approach to study the question of computing the number of lattice points 
in a polytope $P$ is to consider a more general counting problem: For any nonnegative integer $t,$ let $tP := \{ t \x \ : \ \x \in P\}$ be the \emph{$t^{th}$ dilation of $P$}, and then consider the function 
\[
i(P, t) := |tP \cap \Z^D|, 
\]
which counts the number of lattice points in $tP.$
We say two polytopes $P$ and $Q$ are \emph{unimodularly equivalent}\footnote{Unimodular equivalence is sometimes called \emph{integral equivalence}, e.g., in \cite{StaPit}.} if there exists an affine transformation from the affine hull $\aff(P)$ of $P$ to the affine hull $\aff(Q)$ of $Q$ that induces a bijection from lattice points in $\aff(P)$ to lattice points in $\aff(Q)$. Such an affine transformation is called a \emph{unimodular transformation}. It is easy to see that if two polytopes $P$ and $Q$ are unimodularly equivalent, then $i(P,t) = i(Q,t).$

An \emph{integral polytope} (or a \emph{lattice polytope}) is a polytope whose vertices are lattice points.
In the 1960's Eug\`{e}ne Ehrhart \cite{Ehrhart} discovered that the function $i(P,t)$ has nice properties when $P$ is an integral polytope.
\begin{thmsec}[Ehrhart]\label{thm:ehrhart}
For any integral $d$-polytope $P,$ the function $i(P,t)$ is always a polynomial (with real coefficients) of degree $d$ in $t.$
\end{thmsec}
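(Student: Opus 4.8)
The plan is to establish the statement in three stages: reduce to the full-dimensional case, prove it for a single lattice simplex by a generating-function computation, and then glue simplices together along a half-open triangulation.

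First I would reduce to the case $d = D$. The affine hull $\aff(P)$ is a $d$-dimensional affine subspace, and since the vertices of $P$ are lattice points, $\aff(P) \cap \Z^D$ is a coset of a rank-$d$ sublattice of $\Z^D$. Choosing a lattice basis for this sublattice produces a unimodular transformation carrying $P$ onto a full-dimensional integral polytope in $\R^d$, and, as recalled in the excerpt, such a transformation preserves $i(P,t)$. So from now on I assume $P \subseteq \R^d$ is $d$-dimensional.

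The heart of the argument is the case $P = \Delta = \conv(v_0, \ldots, v_d)$, a lattice $d$-simplex. I would pass to the \emph{cone over $\Delta$}: place $\Delta$ at height $1$ in $\R^{d+1}$ and set $C = \{ \sum_{i=0}^d \lambda_i (1, v_i) : \lambda_i \ge 0 \}$. Lattice points of $C$ whose first coordinate equals $t$ correspond bijectively to lattice points of $t\Delta$, so $\sum_{t \ge 0} i(\Delta, t)\, z^t = \sum_{m \in C \cap \Z^{d+1}} z^{m_0}$. Since $\Delta$ is a simplex, every lattice point of $C$ is written uniquely as $p + \sum_{i=0}^d n_i (1, v_i)$ with $n_i \in \Z_{\ge 0}$ and $p$ in the half-open fundamental parallelepiped $\Pi = \{ \sum_{i=0}^d \lambda_i (1, v_i) : 0 \le \lambda_i < 1 \}$. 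The set $\Pi$ is bounded, so $\Pi \cap \Z^{d+1}$ is finite; and since each of the $d+1$ generators has first coordinate $1$, every such $p$ has $p_0 \in \{0, 1, \ldots, d\}$. Collecting terms,
\[ \sum_{t \ge 0} i(\Delta, t)\, z^t \;=\; \frac{h_\Delta(z)}{(1-z)^{d+1}}, \qquad h_\Delta(z) := \sum_{p \in \Pi \cap \Z^{d+1}} z^{p_0}, \]
a rational function whose numerator is a polynomial of degree at most $d$. Expanding $1/(1-z)^{d+1} = \sum_{t \ge 0} \binom{t+d}{d} z^t$ and writing $h_\Delta(z) = \sum_{k=0}^d c_k z^k$ shows $i(\Delta, t) = \sum_{k=0}^d c_k \binom{t - k + d}{d}$; each shifted binomial is a polynomial in $t$ of degree $d$ that agrees with the coefficient for \emph{all} $t \ge 0$ (the apparent exceptions $0 \le t < k$ are killed by a vanishing factor), so $i(\Delta, t)$ is genuinely a polynomial in $t$ of degree at most $d$.

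Finally, for a general $d$-polytope $P$ I would fix a triangulation of $P$ using only the vertices of $P$ — hence into lattice simplices — and then convert it into a \emph{partition} of $P$ into half-open simplices: pick a point in general position and, from each simplex of the triangulation, delete the facets that are "visible" from that point, so that $P$ becomes a disjoint union of half-open lattice simplices $\Delta_1^\circ, \ldots, \Delta_N^\circ$. The cone computation above goes through verbatim for a half-open lattice simplex — only the numerator $h$ changes, and it is still a polynomial of degree at most $d$ — so each $i(\Delta_j^\circ, t)$ is a polynomial of degree at most $d$, and therefore so is $i(P,t) = \sum_{j=1}^N i(\Delta_j^\circ, t)$. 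Degree exactly $d$ then follows from the Riemann-sum estimate $i(P,t)/t^d \to \vol(P) > 0$ as $t \to \infty$, which identifies the leading coefficient as $\vol(P)$.

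I expect the main obstacle to be the bookkeeping in this last stage: verifying that a triangulation can be made into a genuine partition by discarding exactly the right facets, so that the lattice-point counts simply add with no inclusion–exclusion correction, and confirming that the half-open cone computation really does yield a numerator of degree at most $d$. By contrast, the substantive structural input — that the cone over a lattice simplex has generating function with denominator $(1-z)^{d+1}$ — is essentially forced once one observes that the fundamental parallelepiped is bounded and sits between heights $0$ and $d$. A little extra care is also needed in the first reduction, to be sure the sublattice $\aff(P) \cap \Z^D$ genuinely has rank $d$.
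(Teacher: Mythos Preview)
Your argument is correct and is essentially the standard generating-function proof via coning and half-open triangulations. The paper does not give its own proof of Theorem~\ref{thm:ehrhart}: it notes in the Remark on the $h^*$-vector that the theorem is equivalent to the Ehrhart series having the form $h^*_P(z)/(1-z)^{d+1}$ with $\deg h^*_P \le d$, and refers to \cite[Chapter 4]{stanleyec1ed2} for the details --- which is precisely the cone/fundamental-parallelepiped computation you carry out for a simplex and then extend by half-open triangulation. The paper later remarks (\S\ref{subsec:mcmullen}) that McMullen's formula \eqref{equ:exterior} furnishes an alternative proof, since applying it to $tP$ immediately exhibits $i(P,t)$ as a polynomial in $t$; that route trades your explicit computation for the (much deeper) existence of the $\alpha$-valuation.

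One small clarification worth recording for your last stage: in the visibility decomposition from a point in general position, no simplex ever has \emph{all} $d+1$ facets removed (the barycentric coordinates of the external point sum to $1$, so at least one is positive and the corresponding facet is invisible). Hence each half-open parallelepiped $\Pi_I$ has $|I| \le d$, and the numerator $h_{\Delta_j^\circ}(z)$ indeed has degree at most $d$, exactly as you assert.
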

Thus, we call $i(P,t)$ the \emph{Ehrhart polynomial} of an integral polytope $P$, and call the coefficients of $i(P,t)$ the \emph{Ehrhart coefficients} of $P.$ Note that Ehrhart's theorem can be extended to rational polytopes with the concept of a quasi-polynomial; however, we will focus on integral polytopes in this article.

There is much work on the Ehrhart coefficients of integral polytopes. In the 1990's, many people studied the problem of counting lattice points inside 
integral (or more generally rational) polytopes \cite{brion-vergne1,cappell-shaneson, kantor-khovanskii, pommersheim, pukhlikov-khovanskii} by using the theory of toric varieties. Although explicit formulas for coefficients of Ehrhart polynomials can be deduced from these results, they are often quite complicated. Only three coefficients of $i(P,t)$ have simple known forms for arbitrary integral polytopes $P$: the leading coefficient is equal to the normalized volume of $P$, the second coefficient is one half of the sum of the normalized volumes of facets, and the constant term is always $1$. 

Although these three coefficients can be described in terms of volumes (recall $1$ is the normalized volume of a point), and thus are positive, it is not true that all the coefficients of $i(P,t)$ are positive. (The first counterexample comes up in dimension $3$ known as the \emph{Reeve tetrahedron}; see \S \ref{subsec:reeve}.) We say a polytope has \emph{Ehrhart positivity} or is \emph{Ehrhart positive} if it has positive Ehrhart coefficients. It is natural to ask the following question:

\begin{quessec}\label{ques:positive}
Which families of integral polytopes have Ehrhart positivity? 
\end{quessec}

This turns out to be a challenging question. Even though multiple families of polytopes have been shown to be Ehrhart positive in the literature, the techniques involved are (almost) all different. In Section \ref{sec:examples}, we will survey families of polytopes with the Ehrhart positivity property, discussing different reasons why they have this property. 
In particular, as a consequence of the techniques discussed in \S \ref{subsec:higher}, one can show that any combinatorial type of rational polytopes can be realized as an integral polytope that is Ehrhart positive (see Theorem \ref{thm:notcomb}). 
This result indicates that Ehrhart positivity is \emph{not} a combinatorial property. Therefore, it is desirable to find more geometric methods to prove Ehrhart positivity. 
In Section \ref{sec:mcmullen}, we introduce such a tool called \emph{McMullen's formula}, which we use to give a refinement of Ehrhart positivity, called \emph{$\alpha$-positivity}. We then use this tool to attack the Ehrhart positivity conjecture of ``generalized permutohedra'', a family of polytopes introduced by Postnikov \cite{post} and report partial progress on this conjecture. 
In Section \ref{sec:negative}, we include negative results on Question \ref{ques:positive}, presenting examples with negative Ehrhart coefficients. In particular, we will discuss progress on a question asked and studied by Hibi, Higashitani, Tsuchiya and Yoshida \cite{HibHigTsuYos} on all possible sign patterns of Ehrhart coefficients (see \S \ref{subsec:possible}). Note that this question can be considered to be a refinement of Question \ref{ques:positive}. 
Finally, in Section \ref{sec:future}, we include various conjectures on Ehrhart positivity, and pose related questions. 

We finish our introduction with the following remark on the coefficients of the \emph{$h^*$-polynomial}, which is closely related to the Ehrhart polynomials.
\subsection*{Remark on $h^*$-vector}
One method of proving Ehrhart's theorem 
(Theorem \ref{thm:ehrhart}) is by considering the \emph{Ehrhart series} of a $d$-dimensional integral polytope $P$:
\[ \Ehr_P(z) :=\sum_{t \ge 0} i(P,t) z^t.\]
It turns out that Theorem \ref{thm:ehrhart} is equivalent to the existence of a polynomial $h^*_P(z)$ of degree at most $d$ such that $h^*_P(1) \neq 0$ and
\[ \Ehr_P(z) = \frac{h^*_P(z)}{(1-z)^{d+1}}.\]
See \cite[Chapter 4]{stanleyec1ed2} for a statement for the above equivalence result and a proof for Ehrhart's theorem. 

We call $h^*_P(z)$ the \emph{$h^*$-polynomial} of $P$, and the vector $(h^*_{0}, h^*_{1}, \dots, h^*_{d})$, where $h^*_i$ is the coefficient of $z^i$ in $h^*_P(z),$ the \emph{$h^*$-vector} of $P.$
One can recover the Ehrhart polynomial of a $d$-dimensional integral polytope $P$ easily from its $h^*$-vector:
\begin{equation}
i(P,t) = \sum_{j=0}^d h^*_j \binom{t + d -j}{d}.  \label{equ:h2e}
\end{equation}
It is a well-known result due to Stanley that the entries in $h^*$-vectors are all nonnegative integers \cite{stanleydecomp} in contrast to the fact that Ehrhart coefficients could be negative. 
As a consequence, positivity is not such an interesting question for $h^*$-polynomials. Nevertheless, active research have been conducted in other directions.

The most natural question probably is: for each $d$, can we give a complete characterization for all possible $h^*$-vector of $d$-dimensional integral polytopes? For $d=2,$ the answer was first provided in 1976 by Scott \cite{Scott} known as \emph{Scott's condition}. However, for $d \ge 3,$ the question is wide open.  
A lot of work has been done in the literature on searching for inequalities and equalities satisfied by $h^*$-vectors. Most of them were discovered by Hibi \cite{hibi1990, hibi1992, hibi, hibi1994} and Stanley \cite{stanleydecomp, stanley1991} in the 1990s using commutative algebra and combinatorial methods. In 2009, Stapledon \cite{stapledon2009} contributes more inequalities using the idea of degree and codegree of a polytope. 
Known equalities on $h^*$-vectors include 
\begin{equation}\label{equ:knownh}
\sum_{i=0}^d h^*_i = d! \vol(P), \quad h^*_0 = 1, \quad h^*_1 = i(P)-(d+1), \quad h^*_d = |\relint(P) \cap \Z^D|.
\end{equation}
Please see \cite{beck-deloera-et2005, stapledon2009} for lists of known inequalities. Recently, instead of focusing on inequalities satisfied by all polytopes, much work has been done on finding inequalities for polytopes under certain constraints. For example, Treutlein \cite{treutlein} shows that the necessary statement of Scott's condition holds for any integral polytope whose $h^*$-polynomial is of degree at most $2,$ i.e., $h_i^* = 0$ for all $i > 2.$ Most recently, Balletti and Higashitani \cite{BalHig} improve the result further to any integral polytope whose $h^*$-polynomial satisfies $h_3^*=0.$

Another question that comes up a lot in the context of $h^*$-vector is the unimodality question. A sequence of real numbers $c_0, c_1, \dots, c_d$ is \emph{unimodal} if there exists $0 \le j \le d$ such that $c_0 \le c_1 \le \cdots \le c_j \ge c_{j+1} \ge \cdots \ge c_d.$ It is well-known that a nonnegative sequence is unimodal if it has ``no internal zeros'' and is ``log-concave'', and furthermore, log-concavity follows from another property called ``real-rootedness''. Please see surveys by Stanley \cite{stanleylogunimodal} and Brenti \cite{brentilogunimodal} on log-concave and unimodal sequences, and a survey by Br\"and\'en \cite{branden-unilogreal} with a more general discussion on unimodality, log-concavity and real-rootedness. Recently, Braun \cite{braun-survey-unimodal} wrote a survey on unimodality problem of the $h^*$-vector of integral polytopes, discussing a wide range of tools (including but not limited to the techniques mentioned in the aforementioned surveys) to attack this problem. 
Finally, we would like to remark that even though Ehrhart coefficients and $h^*$-vectors are related by \eqref{equ:h2e}, there is no general implication between Ehrhart positivity and $h^*$-unimodality \cite{ehrhart-uni-pos}.

\subsection*{Acknowledgements} The author is partially supported by a grant from the Simons Foundation \#426756. The writing was completed when the author was attending the program ``Geometric and Topological Combinatorics'' at the Mathematical Sciences Research Institute in Berkeley, California, during the Fall 2017 semester, and she was partially supported by the NSF grant DMS-1440140.

The author would like to thank Gabriele Balletti, Ben Braun, Federico Castillo, Ron King, Akihiro Higashitani, Sam Hopkins, Thomas McConville, Karola M\'esz\'aros, Alejandro Morales, Benjamin Nill, Andreas Paffenholz, Alex Postnikov, Richard Stanley, Liam Solus, and Akiyoshi Tsuchiya for valuable discussions and helpful suggestions.
The author is particularly grateful to Federico Castillo and Alejandro Morales for their help in putting together some figures and data used in this article.  

Finally, the author is thankful to the two anonymous referees for their careful reading of this article and various insightful comments and suggestions. 

\section{Polytopes with Ehrhart positivity}\label{sec:examples}

In the literature, there are multiple interesting families of polytopes shown to be Ehrhart positive using very different techniques. In this section, we put together a collection of such families, separating them into four categories based on 
the reasons why they are Ehrhart positive.
However, we make no attempt to give a comprehensive account of \emph{all} families with this property. 
We also note that as the leading coefficient of $i(P,t)$ is the volume of $P,$ one can often extract a formula for volume from descriptions for Ehrhart polynomials we give below. However, we will focus only on results on Ehrhart polynomials here, and omit related formulas for volumes.

In this article, we use bold letters to denote both vectors and points in $\R^D.$ For example, $\be_i$ denotes both the $i$th vector in the standard basis and the point $(0, \dots, 0, 1, 0, \dots, 0)$ where $1$ is in the $i$th position.

For convenience, we use $\N$ to denote the set of nonnegative integers, and $\P$ the set of positive integers. 

\subsection{Products of positive linear polynomials} \label{subsec:explicit}

In this part, we present families of polytopes whose Ehrhart polynomials can be described explicitly, which can be shown to have positive coefficients using the following naive lemma.

\begin{lem}\label{lem:product}
Suppose a polynomial $f(t)$ is either 
\begin{enumerate}[(a)]
	\item \label{item:prod} a product of linear polynomials with positive coefficients, or
	\item \label{item:sum} a sum of products of linear polynomials with positive coefficients. \end{enumerate}
Then $f(t)$ has positive coefficients.
\end{lem}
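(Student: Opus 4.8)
The plan is to treat the two cases in order, with part~(\ref{item:sum}) reduced to part~(\ref{item:prod}) at the very end. For part~(\ref{item:prod}), I would argue by induction on the number $n$ of linear factors. Write $f(t) = \prod_{i=1}^{n} (a_i t + b_i)$ with $a_i, b_i > 0$ (allowing also positive constant factors, which only makes the claim easier). The base case $n \le 1$ is immediate. For the inductive step, suppose $g(t) = \sum_{k=0}^{m} c_k t^k$ has all $c_k > 0$; then
\[
(a t + b)\, g(t) \;=\; \sum_{k=0}^{m} c_k a\, t^{k+1} \;+\; \sum_{k=0}^{m} c_k b\, t^{k},
\]
and reading off the coefficient of $t^k$ in the product exhibits it as a sum of the positive quantities $c_{k-1}a$ and/or $c_k b$ (with the convention that out-of-range $c_k$ are simply absent), hence positive. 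This establishes~(\ref{item:prod}).

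For part~(\ref{item:sum}), write $f = \sum_{j=1}^{N} p_j$, where each $p_j$ is a product of linear polynomials with positive coefficients. By part~(\ref{item:prod}), every $p_j$ has all of its coefficients, from the constant term up through the coefficient of $t^{\deg p_j}$, strictly positive. Since addition of polynomials adds coefficients termwise, the coefficient of $t^k$ in $f$ equals $\sum_{j} \bigl[\text{coefficient of } t^k \text{ in } p_j\bigr]$, a sum in which every summand is $\ge 0$. Moreover $\deg f = \max_j \deg p_j$, because no cancellation among top-degree terms can occur between polynomials with positive leading coefficients; hence, for each $k \le \deg f$, some $p_j$ has $\deg p_j \ge k$ and contributes a strictly positive coefficient of $t^k$. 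Therefore all coefficients of $f$ are positive.

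Since the statement is elementary, there is no real obstacle here; the only points deserving a moment's care are deciding exactly what ``linear polynomial with positive coefficients'' is meant to encompass (degree exactly one versus degree at most one), which does not affect either argument, and the bookkeeping in~(\ref{item:sum}) ensuring that no coefficient below the degree is accidentally zero, which is precisely what the no-cancellation observation above guarantees.
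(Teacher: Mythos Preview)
Your argument is correct. In the paper itself this lemma is stated without proof: it is introduced as a ``naive lemma'' and the text immediately moves on to applying it, so there is no paper proof to compare against. Your write-up supplies exactly the elementary details the paper leaves implicit, and the one subtlety in part~(\ref{item:sum})---that for every $k \le \deg f$ at least one summand $p_j$ actually contributes a strictly positive coefficient of $t^k$---is handled correctly by your observation that $\deg f = \max_j \deg p_j$ (no cancellation among positive leading terms) together with the fact from part~(\ref{item:prod}) that each $p_j$ has positive coefficients in \emph{every} degree up to $\deg p_j$.
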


We start with the two simplest families of polytopes: \emph{unit cubes} and \emph{standard simplices}, whose Ehrhart polynomials fit into situation \eqref{item:prod} of Lemma \ref{lem:product}, and thus Ehrhart positivity follows. As these are the first examples of Ehrhart polynomials in this article, and the computations 
are straightforward, we include all the details. For most of the remaining examples we discuss in this paper, we only state the results without providing detailed proofs.

\subsubsection{Unit cubes} \label{cube}
The $d$-dimensional \emph{unit cube}, denoted by ${\square_d}$, is the convex hull of all points in $\R^d$ with coordinates in $\{0,1\},$ i.e.,
\[ \square_d := \conv \{ \x = (x_1, x_2, \dots, x_d) \in \R^d \ : \ x_i = 0 \text{ or } 1 \text{ for $i=1,2, \dots, d$}\}.\]
It is easy to verify that the unit cube is the solution set to the following linear system of inequalities: 
\[ \square_d = \{ \x =(x_1, x_2, \dots, x_d) \in \R^d \ : \ 0 \le x_i \le 1 \text{ for $i=1,2, \dots, d$}\},\]
Then for any $t \in \N,$
\[ t \square_d = \{ \x =(x_1, x_2, \dots, x_d) \in \R^d \ : \ 0 \le x_i \le t \text{ for $i=1,2, \dots, d$}\}.\]
Thus,
\[  t \square_d \cap \Z^d = \{ \x =(x_1, x_2, \dots, x_d) \in \Z^d \ : \ 0 \le x_i \le t \text{ for $i=1,2, \dots, d$}\}.\]
For each $i,$ the number of integers $x_i$ such that $0 \le x_i \le t$ is $t+1.$ Thus,
\[ i(\square_d, t) = (t+1)^d.\] 

\subsubsection{Standard simplices}\label{simplex}
The $d$-dimensional \emph{standard simplex}, denoted by $\Delta_d,$ is the convex hull of all the elements in the standard basis $\be_1, \be_2, \dots, \be_{d+1}$ of $\R^{d+1}:$
\[ \Delta_d := \conv\{ \be_1, \be_2, \dots, \be_{d+1} \}.\]
One checks that $\Delta_d$ can also be defined by the following linear system:
\[ \sum_{j=1}^{d+1} x_j = 1, \text{ and  } x_i \ge 0 \ \text{ for $i = 1, 2, \dots, d+1$}.\]
Hence, for any $t \in \N,$
\small
\[ t \Delta_d = \left\{ \x =(x_1, x_2, \dots, x_{d+1}) \in \R^{d+1} \ : \ \sum_{j=1}^{d+1} x_j = t, \text{ and }  x_i \ge 0 \text{ for $i=1, 2, \dots, d+1$} \right\},\]
\normalsize and
\small \[ t \Delta_d \cap \Z^{d+1} = \left\{ \x =(x_1, x_2, \dots, x_{d+1}) \in \Z^{d+1} \ : \ \sum_{j=1}^{d+1} x_j = t, \text{ and }  x_i \ge 0 \text{ for $i = 1, 2, \dots, d+1$} \right\}.\]
\normalsize
Hence, $i(\Delta_d,t)$ counts the number of nonnegative integer solutions to 
\[ x_1 + x_2 + \cdots + x_{d+1} = t.\]
This is a classic combinatorial problem which is the same as counting the number of weak compositions of $t$ into $d+1$ parts (see \cite[Page 18]{stanleyec1ed2}), and the answer is given by 
\[ i(\Delta_d, t) = \binom{t+d}{d}.\]

As we mentioned above, Ehrhart positivity of unit cubes and standard simplices follows from situation \eqref{item:prod} of Lemma \ref{lem:product}. Next, we present two families of examples with Ehrhart polynomials in the form of situation \eqref{item:sum} of Lemma \ref{lem:product}.

\subsubsection{Pitman-Stanley polytopes} \label{subsubsec:PitSta}

Let $\ba = (a_1, \dots, a_{d}) \in \N^{d}$. The following polytope is introduced and studied by Pitman and Stanley \cite{StaPit}:
\[ \PS_d(\ba) := \left\{ \x \in \R^d \ : \ x_i \ge 0 \text{ and } \sum_{j=1}^i x_j \le \sum_{j=1}^i a_i, \ \text{for $i=1,2,\dots, d$}\right\},\]
hence we call it a \emph{Pitman-Stanley polytope}.



Pitman and Stanley gave an explicit formula \cite[Formula (33)]{StaPit} for computing the number of lattice points in $\PS_d(\ba)$, from which a formula for the Ehrhart polynomial of $\PS_d(\ba)$ immediately follows. Recall $\multiset{x}{y} = \binom{x+y-1}{y}.$

\begin{thm}[Pitman-Stanley]\label{thm:pitsta}
Let 
\small
\[ I_d := \left\{ \bi = (i_1, i_2, \dots, i_d) \in \N^d \ : \ \sum_{j=1}^d i_j = d, \text{ and } \sum_{j=1}^k i_j \ge k  \text{ for } k=1,2,\dots d-1\right\}.\]
\normalsize
Then the Ehrhart polynomial of $\PS_d(\ba)$ is given by
\begin{equation}\label{equ:pitsta}
i(\PS_d(\ba), t) = \sum_{\bi \in I_d} \multiset{a_1t+1}{i_1} \prod_{k=2}^d \multiset{a_k t}{i_k}.
\end{equation}
\end{thm}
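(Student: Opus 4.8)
The plan is to reduce to Pitman and Stanley's lattice-point count and then observe that the resulting expression is visibly polynomial in $t$. First I would note that, since $\PS_d(\ba)$ is cut out by the inequalities $x_i\ge 0$ and $\sum_{j=1}^i x_j\le\sum_{j=1}^i a_j$ for $i=1,\dots,d$, its $t$-th dilate satisfies $t\,\PS_d(\ba)=\PS_d(t\ba)$, where $t\ba:=(ta_1,\dots,ta_d)$; hence $i(\PS_d(\ba),t)=N_d(ta_1,\dots,ta_d)$ with $N_d(\bb):=|\PS_d(\bb)\cap\Z^d|$. So it suffices to establish, for every $\bb=(b_1,\dots,b_d)\in\N^d$, the identity
\[ N_d(\bb)=\sum_{\bi\in I_d}\multiset{b_1+1}{i_1}\prod_{k=2}^d\multiset{b_k}{i_k}, \]
which is \cite[Formula (33)]{StaPit}, and then to substitute $\bb=t\ba$. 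Since $\multiset{a_1t+1}{i_1}=\binom{a_1t+i_1}{i_1}$ and $\multiset{a_kt}{i_k}=\binom{a_kt+i_k-1}{i_k}$ are polynomials in $t$, the right-hand side of \eqref{equ:pitsta} is then a polynomial in $t$ that agrees with the counting function $i(\PS_d(\ba),t)$ at every $t\in\P$; as $\PS_d(\ba)$ is a lattice polytope (its defining-inequality matrix is totally unimodular: the prefix-sum rows form a consecutive-ones matrix, and adjoining the sign-constraint rows preserves this), the polynomial in question is its Ehrhart polynomial. The real content is thus the identity for $N_d(\bb)$; one may simply cite it, but I would reprove it by induction on $d$.

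For the induction I would pass to the partial sums $s_i=x_1+\cdots+x_i$ and $S_i=b_1+\cdots+b_i$, so that $N_d(\bb)$ counts the weakly increasing integer sequences $0\le s_1\le\cdots\le s_d$ with $s_i\le S_i$ for all $i$. Conditioning on $x_1\in\{0,1,\dots,b_1\}$ and reindexing the remaining coordinates yields the recursion
\[ N_d(b_1,\dots,b_d)=\sum_{c=b_2}^{b_1+b_2}N_{d-1}(c,b_3,\dots,b_d),\qquad N_1(b_1)=b_1+1. \]
Substituting the inductive formula for $N_{d-1}$ and interchanging the two finite sums, the inner sum $\sum_{c=b_2}^{b_1+b_2}\multiset{c+1}{j_1}=\sum_{c=b_2}^{b_1+b_2}\binom{c+j_1}{j_1}$ evaluates, by the hockey-stick identity, to $\multiset{b_1+b_2+1}{j_1+1}-\multiset{b_2}{j_1+1}$. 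Expanding the first of these by the Vandermonde convolution $\multiset{b_1+b_2+1}{m}=\sum_{p=0}^m\multiset{b_1+1}{p}\multiset{b_2}{m-p}$, the $p=0$ summand is exactly $\multiset{b_2}{j_1+1}$ and cancels the subtracted term, leaving
\[ N_d(\bb)=\sum_{\bj\in I_{d-1}}\ \sum_{p=1}^{j_1+1}\multiset{b_1+1}{p}\,\multiset{b_2}{j_1+1-p}\prod_{k=2}^{d-1}\multiset{b_{k+1}}{j_k}. \]

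The last step is to recognize this as the claimed sum over $I_d$. I would use the map that sends a pair $(\bj,p)$ with $\bj\in I_{d-1}$ and $1\le p\le j_1+1$ to $\bi$ with $i_1=p$, $i_2=j_1+1-p$, and $i_k=j_{k-1}$ for $k\ge 3$; its inverse sends $\bi\in I_d$ to $p=i_1$, $j_1=i_1+i_2-1$, $j_l=i_{l+1}$ for $l\ge 2$. One checks directly that this is a bijection onto $I_d$ (the inequalities defining $I_{d-1}$ translate termwise into those defining $I_d$, with $i_1\ge 1$ coming from $p\ge 1$), and that under it the summand above becomes precisely $\multiset{b_1+1}{i_1}\prod_{k=2}^d\multiset{b_k}{i_k}$. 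I expect this index bookkeeping---keeping the direction of the shift, the range of $p$, and the cancellation of $\multiset{b_2}{j_1+1}$ mutually consistent---together with the verification of the dilation identity $t\,\PS_d(\ba)=\PS_d(t\ba)$, to be the only delicate points; everything else reduces to the conditioning recursion and the hockey-stick and Vandermonde identities, which are routine. If one instead takes \cite[Formula (33)]{StaPit} as given, the theorem follows immediately from the polynomiality observation in the first paragraph.
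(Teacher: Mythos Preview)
Your proposal is correct and follows exactly the paper's approach: the paper simply cites \cite[Formula~(33)]{StaPit} for the lattice-point count of $\PS_d(\bb)$ and remarks that the Ehrhart polynomial ``immediately follows,'' which is precisely your reduction via $t\,\PS_d(\ba)=\PS_d(t\ba)$ and the polynomiality observation. Your inductive reproof of Formula~(33) via the recursion, hockey-stick, Vandermonde, and the bijection $(\bj,p)\leftrightarrow\bi$ is additional detail the paper does not supply; it checks out (in particular, the constraint $i_1\ge 1$ built into $I_d$ matches the range $p\ge 1$ after the $p=0$ cancellation), so you have in fact given more than the paper does.
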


For each $\i$, both $\multiset{a_1t+1}{i_1}$ and $\multiset{a_k t}{i_k}$ are products of linear polynomials in $t$ with positive coefficients, 
so it follows from Lemma \ref{lem:product}/\eqref{item:sum} that any Pitman-Stanley polytope $\PS_d(\ba)$ is Ehrhart positive. 

Pitman-Stanley polytopes are contained in two different more general families of polytopes: flow polytopes and generalized permutohedra. For each of these two bigger families of polytopes, formulas for Ehrhart polynomials of some subfamily have been derived, generalizing Formula \eqref{equ:pitsta}. 
We present results on flow polytopes in the next part below, while the results on generalized permutohedra are postponed to \S \ref{subsubsec:typey} as part of a general discussion on the Ehrhart positivity conjecture of generalized permutohedra in Section \ref{sec:mcmullen}.

\subsubsection{Subfamilies of flow polytopes} \label{subsubsec:flow}
Let $G$ be a (loopless) directed acyclic connected graph on $[n+1]=\{1,2,\dots,n+1\}$ such that each edge $\{i,j\}$ with $i < j$ is always directed from $i$ to $j$. Hence, we denote the edge by $(i,j)$ to indicate the orientation. 
For any $\ba = (a_1, a_2, \dots, a_n) \in \N^n,$ we associate to it another vector 
\begin{equation}
\bar{\ba} := \left(a_1,\dots, a_n, -\sum_{i=1}^n a_i\right).
\label{equ:bara}
\end{equation}
An \emph{$\bar{\ba}$-flow} on $G$ is a vector $\f = (f(e))_{e \in E(G)} \in \left(\R_{\ge 0}\right)^{E(G)}$ such that for $i=1,2,\dots, n,$ we have
\[ \sum_{e=(g,i) \in E(G)} f(e) + a_i = \sum_{e=(i,j) \in E(G)} f(e),\]
that is, the \emph{netflow} at vertex $i$ is $a_i.$ Note these conditions imply that the netflow at vertex $n+1$ is $-\sum_{i=1}^n a_i.$
The \emph{flow polytope} $\cF_G(\bar{\ba})$ associated to $G$ and the integer netflow $\bar{\ba}$ as the set of all $\bar{\ba}$-flows $\f$ on $G.$ 

\begin{ex} \label{ex:graphpitsta} Let $G_d^{\PS}$ be the graph on $[d+1]$ with edge set 
	\[\{ (i,i+1), (i,d+1) \ : \ i=1,2,\dots, d\}.\]
	Baldoni and Vergne \cite[Example 16]{BalVer} show that $\cF_{\G_d^{\PS}}(\bar{\ba})$ is unimodularly equivalent to the Pitman-Stanley polytope $\PS_d(\ba)$. 
\end{ex}

For each edge $e=(i,j)$ of $G,$ we associate to it the positive type $A_n$ root $\alpha(e) = \alpha(i,j) = \be_i-\be_j.$ For any $\bb \in \Z^{n+1},$ the \emph{Kostant partition function} $\KP_G$ evaluated at $\bb$ is 
\[ \KP_G(\bb) := \# \left\{ \f = (f(e))_{e \in E(G)} \in \N^{E(G)} \ : \ \sum_{e \in E(G)} f(e) \alpha(e) = \bb \right\}. \]
It is straightforward to verify that for $\ba \in \N^n,$ 
\[ \KP_G(\bar{\ba}) = | \cF_G(\bar{\ba}) \cap \Z^{E(G)}|,\]
i.e., $\KP_G(\bar{\ba})$ counts the number of lattice points in the flow polytope $\cF_G(\bar{\ba})$.
In the literature, various groups of people \cite{lidskii, PosStaFlow2, BalVer, MesMor} obtained formulas for Kostant partition functions, or equivalently, the number of lattice points in flow polytopes. As a consequence, we can easily obtain formulas for the Ehrhart polynomial of $\cF_G(\bar{\ba}).$


\begin{thm}[Lidskii, Postnikov-Stanley, Baldoni-Vergne, M\'esz\'aros-Morales]\label{thm:flow}
	Suppose $G$ is a connected graph on the vertex set $[n + 1]$, with $m$ edges directed $i \to j$ if $i < j$, and with at least one outgoing edge at vertex $i$ for $i = 1, \dots, n.$ 
	Let $\out_k$ (and $\iin_k$, respectively) denote the outdegree (and the indegree, respectively) of vertex $k$ in $G$ minus $1.$

	Then for any $\ba = (a_1, \dots, a_n) \in \N^n,$ the Ehrhart polynomial of $\cF_G(\bar{\ba})$ is given by
	\begin{align}
		& i(\cF_G(\bar{\ba}), t) \nonumber \\
		=& \sum_{\bj} \prod_{k=1}^n \binom{a_k t + \out_k}{j_k}  \cdot \KP_G(j_1-\out_1, j_2-\out_2, \dots, j_n-\out_n, 0), \label{equ:flow0} \\
		=& \sum_{\bj} \prod_{k=1}^n \multiset{a_k t - \iin_k}{j_k}  \cdot \KP_G(j_1-\out_1, j_2-\out_2, \dots, j_n-\out_n, 0), \label{equ:flow}
	\end{align}
where each summation is over all weak compositions $\bj=(j_1, \dots, j_n)$ of $m-n$ that are $\ge (\out_1, \dots, \out_n)$ in dominance order.
\end{thm}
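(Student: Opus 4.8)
\emph{Plan.} The plan is to identify $i(\cF_G(\bar\ba),t)$ with a value of the Kostant partition function along a ray, to prove the Lidskii-type formula for that value, and then to observe that \eqref{equ:flow0} and \eqref{equ:flow} are the two standard guises of it. Write $m=|E(G)|$; since each of the vertices $1,\dots,n$ has an outgoing edge and $n+1$ is a sink, $\sum_{k=1}^{n}\out_k = m-n\ge 0$.

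\emph{Step 1: reduction to $\KP_G$.} The polytope $\cF_G(\bar\ba)$ is defined by $\f\ge\0$ together with the linear netflow equations, and rescaling a flow rescales its netflow; hence $t\,\cF_G(\bar\ba)=\cF_G(t\bar\ba)=\cF_G(\overline{t\ba})$ for every $t\in\N$. Combined with the identity $\KP_G(\bar{\bb})=|\cF_G(\bar{\bb})\cap\Z^{E(G)}|$ recorded before the theorem (used with $\bb=t\ba\in\N^n$), this yields
\begin{equation*}
i(\cF_G(\bar\ba),t)\;=\;\big|\,t\,\cF_G(\bar\ba)\cap\Z^{E(G)}\,\big|\;=\;\KP_G(\overline{t\ba}),\qquad \overline{t\ba}=\Big(a_1t,\dots,a_nt,\,-\sum_{k=1}^{n}a_kt\Big).
\end{equation*}
So the theorem is exactly an explicit evaluation of $\KP_G$ on the dilated netflow $\overline{t\ba}$; once such a formula is in hand, polynomiality in $t$ is automatic, because the factors $\KP_G(j_1-\out_1,\dots,j_n-\out_n,0)$ are constants, each $\binom{a_kt+\out_k}{j_k}$ is a polynomial of degree $j_k$ in $t$, and the outer sum is finite ($\sum_k j_k=m-n$).

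\emph{Step 2: the Lidskii formula.} The heart of the matter is the \emph{Lidskii formula}: for every $\bb=(b_1,\dots,b_n)\in\N^n$,
\begin{equation*}
\KP_G(\bar{\bb})\;=\;\sum_{\bj}\;\prod_{k=1}^{n}\binom{b_k+\out_k}{j_k}\;\cdot\;\KP_G\!\big(j_1-\out_1,\dots,j_n-\out_n,0\big),
\end{equation*}
the sum running over weak compositions $\bj$ of $m-n$ that dominate $(\out_1,\dots,\out_n)$ (for $\bj$ outside this range the inner Kostant partition function vanishes, since a flow all of whose edges point forward forces every partial sum of its netflow to be nonnegative). I would prove this from the product expansion
\begin{equation*}
\sum_{\bb}\KP_G(\bb)\,\x^{\bb}\;=\;\prod_{(i,j)\in E(G)}\frac{1}{1-x_ix_j^{-1}},
\end{equation*}
by extracting $\KP_G(\bar{\bb})$ as an iterated residue (equivalently, a suitably ordered constant term) and then carrying out the Elliott--MacMahon partial-fraction reduction one variable at a time; this is the route of Postnikov--Stanley and Baldoni--Vergne. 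An alternative, purely combinatorial derivation runs the M\'esz\'aros--Morales ``reduction rules,'' which subdivide $\cF_G$ into pieces whose lattice-point counts reassemble into the right-hand side. Specializing $b_k=a_kt$ gives \eqref{equ:flow0}.

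\emph{Step 3: the companion form, and the main obstacle.} Formula \eqref{equ:flow} is the second (``dual'') Lidskii formula: one runs the same residue computation but reduces the variables in the opposite order -- equivalently, builds flows up from the sink rather than the source -- which replaces out-degrees by in-degrees and, because each $1/(1-x_ix_j^{-1})$ is now expanded ``the other way,'' turns the binomials into multiset coefficients via $\binom{-x}{j}=(-1)^{j}\multiset{x}{j}$ (the overall sign $(-1)^{\sum_k j_k}=(-1)^{m-n}$ cancelling in the end). Since both right-hand sides equal $\KP_G(\overline{t\ba})$ they agree, though not term by term; as a consistency check, specializing \eqref{equ:flow} to the graph $G_d^{\PS}$ of Example \ref{ex:graphpitsta}, for which $\iin_1=-1$ and $\iin_k=0$ for $k\ge 2$, recovers \eqref{equ:pitsta}. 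The real work is Step 2: executing the iterated-residue / partial-fraction manipulation (or the equivalent polytopal subdivision) so that it lands exactly on the stated formula, with the correct summation range. Steps 1 and 3 are essentially formal, the one delicate point in Step 3 being the bookkeeping of the in-degrees and signs needed to reach the precise multiset-coefficient form of \eqref{equ:flow}.
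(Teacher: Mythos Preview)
The paper does not give its own proof of this theorem; it is stated as a cited result in a survey, with the remark that Formula~\eqref{equ:flow0} was first found by Postnikov--Stanley via the Elliott--MacMahon algorithm, that Baldoni--Vergne proved both \eqref{equ:flow0} and \eqref{equ:flow} by residue computation, and that M\'esz\'aros--Morales recovered both via polytopal subdivisions of flow polytopes. Your sketch is therefore not being compared against a proof in the paper, but against the methods in those references, and at that level it is broadly faithful: Step~1 is the correct and standard reduction, and Step~2 names exactly the two routes (iterated residue / Elliott--MacMahon, and the subdivision argument) that the paper attributes to the cited authors.

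One point to flag in Step~3: your description of \eqref{equ:flow} as arising from ``reducing the variables in the opposite order'' with a sign cancellation via $\binom{-x}{j}=(-1)^j\multiset{x}{j}$ is suggestive but not quite the mechanism in Baldoni--Vergne or M\'esz\'aros--Morales. In those papers the two formulas come out of the same residue (respectively, the same subdivision) but with two different ways of bookkeeping the contribution at each vertex, and the equality is not obtained by a global sign flip of the expansion. Your consistency check against $G_d^{\PS}$ is fine, but if you were to write this out in full you would need to follow the actual derivation rather than the heuristic you give; as it stands, Step~3 is the part of your outline that would not survive being made precise without consulting the original arguments.
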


We remark that Lidskii \cite{lidskii} gives a formula for computing Kostant partition functions associated to the complete graph $K_{n+1},$ which yields Formula \eqref{equ:flow0} above with $G = K_{n+1}.$ Postnikov and Stanley \cite[unpublished]{PosStaFlow2} were the first to discover Formula \eqref{equ:flow0} for arbitrary graphs $G$ using the Elliott-MacMahon algorithm. Baldoni and Vergne \cite{BalVer} give a proof for both formulas in Theorem \ref{thm:flow} using residue computation. 
Most recently, M\'esz\'aros and Morales \cite{MesMor} recover Baldoni-Vergne's result by extending ideas of Postnikov and Stanley on the Elliott-MacMahon algorithm and polytopal subdivisions of flow polytopes. 

Formula \eqref{equ:flow} is useful in obtaining positivity results since
\[\multiset{a_kt-\iin_k}{j_k} = \binom{a_k t - \iin_k+j_k-1}{j_k}\] is a product of linear polynomials in $t$ with positive coefficients as long as $\iin_k = 0$ or $-1.$ 
Also, note that $\KP_G(j_1-\out_1, j_2-\out_2, \dots, j_n-\out_n, 0)$ is nonnegative and $i(\cF_G(\bar{\ba}),t) \neq 0.$ The following result immediately follows from Lemma \ref{lem:product}/\eqref{item:sum}.

\begin{cor}\label{cor:flow}
  Assume the hypotheses of Theorem \ref{thm:flow}. Assume further that for each vertex $i \in [n]=\{1,2,\dots,n\},$ the indegree of $i$ is either $0$ or $1.$ Then the flow polytope $\cF_G(\bar{\ba})$ is Ehrhart positive.
\end{cor}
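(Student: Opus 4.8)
The plan is to deduce Corollary~\ref{cor:flow} straight from Formula~\eqref{equ:flow} of Theorem~\ref{thm:flow} together with Lemma~\ref{lem:product}/\eqref{item:sum}; beyond that theorem the only ingredient is a translation of the extra hypothesis. In the notation of Theorem~\ref{thm:flow}, $\iin_k$ is the indegree of vertex $k$ in $G$ minus $1$, so asking that every vertex $i\in[n]$ have indegree $0$ or $1$ is exactly the requirement that $\iin_k\in\{-1,0\}$ for every $k\in[n]$. (The value $\iin_{n+1}$ never enters, as the products in \eqref{equ:flow} range only over $k\in[n]$.)

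Granting this, fix a weak composition $\bj=(j_1,\dots,j_n)$ contributing to the sum in \eqref{equ:flow} and examine one factor,
\[
\multiset{a_k t-\iin_k}{j_k}=\binom{a_k t-\iin_k+j_k-1}{j_k}=\frac{1}{j_k!}\prod_{\ell=0}^{j_k-1}\bigl(a_k t-\iin_k+\ell\bigr).
\]
Since $\ba\in\N^n$ gives $a_k\ge 0$ and $\iin_k\in\{-1,0\}$, each linear factor $a_k t+(\ell-\iin_k)$ has a nonnegative constant term (positive whenever $\iin_k=-1$); as recorded just before the statement, this identifies $\multiset{a_k t-\iin_k}{j_k}$ as a product of linear polynomials in $t$ with positive coefficients (the empty product, for $j_k=0$, being $1$). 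Multiplying over $k\in[n]$, and using that each value $\KP_G(j_1-\out_1,\dots,j_n-\out_n,0)$ is a nonnegative integer, Formula~\eqref{equ:flow} exhibits $i(\cF_G(\bar{\ba}),t)$ as a sum of products of linear polynomials in $t$ with positive coefficients (after discarding the summands that vanish identically); this sum is nonempty because $i(\cF_G(\bar{\ba}),t)$ is not the zero polynomial (its constant term is $1$). Lemma~\ref{lem:product}/\eqref{item:sum} now yields that $i(\cF_G(\bar{\ba}),t)$ has positive coefficients, i.e.\ that $\cF_G(\bar{\ba})$ is Ehrhart positive.

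I do not expect any real obstacle: the substance is entirely in the quoted Theorem~\ref{thm:flow}, and what remains is bookkeeping. The point to watch is the $-1$ shift built into $\iin_k$: it is precisely the inequality $\iin_k\le 0$ (i.e.\ indegree $\le 1$, rather than a weaker bound like indegree $\le 2$) that keeps every linear factor of $\multiset{a_k t-\iin_k}{j_k}$ with a nonnegative constant term, so that one should work with \eqref{equ:flow} and not with the equivalent \eqref{equ:flow0}, whose factors $\binom{a_k t+\out_k}{j_k}$ need not be products of positive linear polynomials. One should also confirm that the genuinely degenerate cases — a summand killed because $a_k=0$ at a vertex of indegree $1$, or the borderline factor $a_k t$ appearing when $\iin_k=0$ — do not spoil the appeal to Lemma~\ref{lem:product}/\eqref{item:sum}.
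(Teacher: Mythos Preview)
Your proposal is correct and matches the paper's own argument essentially line for line: the paper likewise observes that $\multiset{a_k t-\iin_k}{j_k}=\binom{a_k t-\iin_k+j_k-1}{j_k}$ is a product of linear polynomials in $t$ with positive coefficients precisely when $\iin_k\in\{0,-1\}$, notes that the Kostant partition values are nonnegative and that $i(\cF_G(\bar{\ba}),t)\neq 0$, and then invokes Lemma~\ref{lem:product}/\eqref{item:sum}. Your added remarks about why \eqref{equ:flow} rather than \eqref{equ:flow0} is the right formula, and about the borderline factor $a_k t$, are exactly the small points the paper leaves implicit.
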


We remark that the graph $G^{\PS}_d$ defined in Example \ref{ex:graphpitsta} satisfies the hypothesis of the above corollary. Hence, Ehrhart positivity of the Pitman-Stanley polytope is a special case of Corollary \ref{cor:flow}.




\subsection{Roots with negative real parts}
In this part, we show examples with Ehrhart positivity using the following lemma. 
We use $\Re(z)$ to denote the real part of a complex number $z.$ 
\begin{lem}\label{lem:neg}
Let $p(t)$ be a polynomial in $t$ with real coefficients. If the real part $\Re(r)$ is negative for every root $r$ of $p(t)$, then all the coefficients of $p(t)$ are positive.
\end{lem}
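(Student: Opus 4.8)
The plan is to factor $p(t)$ over $\R$ into linear and quadratic factors according to its roots, and then invoke Lemma \ref{lem:product}. Write $p(t) = c \prod_{i} (t - r_i) \prod_{j} (t^2 - 2\Re(\rho_j) t + |\rho_j|^2)$, where the $r_i$ are the real roots of $p$ and the $\rho_j, \bar{\rho_j}$ are the pairs of complex-conjugate non-real roots; note that $c > 0$ since $c$ is the leading coefficient and, as we will see, $p$ evaluated at large positive $t$ must be positive. For each real root $r_i$, the hypothesis $\Re(r_i) < 0$ means $r_i < 0$, so $t - r_i = t + |r_i|$ is a linear polynomial with positive coefficients. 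For each conjugate pair, setting $a = \Re(\rho_j) < 0$ and $b = \operatorname{Im}(\rho_j)$, the quadratic factor is $t^2 - 2a t + (a^2 + b^2)$, which has positive coefficients precisely because $-2a > 0$ and $a^2 + b^2 > 0$. Thus $p(t)$ is, up to the positive scalar $c$, a product of linear polynomials with positive coefficients, and Lemma \ref{lem:product}\eqref{item:prod} applies to conclude that all coefficients of $p(t)$ are positive.

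The one genuinely careful point is justifying that the leading coefficient $c$ is positive; this is where I would spend the most attention, since it is the only step that is not purely formal. The argument is that for $t$ real and larger than every $r_i$, each factor $t - r_i$ is positive, and each quadratic factor $t^2 - 2at + (a^2+b^2) = (t-a)^2 + b^2 > 0$ is positive as well; hence $\operatorname{sign}(p(t)) = \operatorname{sign}(c)$ for all sufficiently large real $t$. On the other hand $p$, having all roots with negative real part, in particular has no positive real root, so $p$ does not change sign on $(0, \infty)$; and one may as well observe directly that the above factorization with $t$ large forces $p(t) \to +\infty$ or $-\infty$ according to $\operatorname{sign}(c)$, while if $c < 0$ we could instead replace $p$ by $-p$ at the outset, which has the same roots, prove the claim for $-p$, and deduce a contradiction — so in fact the cleanest phrasing is: without loss of generality $c > 0$ (otherwise apply the result to $-p(t)$, whose roots coincide with those of $p(t)$, and negate), and then the factorization argument above goes through verbatim.

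A subtlety worth flagging for the reader is the edge case where $p$ has no real roots at all, or no complex roots at all, or is constant; in each of these the relevant product is empty and the conclusion is immediate (a positive constant has positive coefficients, trivially). I would state the factorization allowing either product to be empty so that no case analysis is needed. No deeper input than the fundamental theorem of algebra (to guarantee the factorization into real linear and irreducible real quadratic factors) and Lemma \ref{lem:product} is required, so the proof is short; the substance of the lemma is really the elementary observation that an irreducible real quadratic $t^2 + \beta t + \gamma$ with a complex root of negative real part automatically satisfies $\beta > 0$ and $\gamma > 0$.
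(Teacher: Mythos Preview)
Your approach is exactly the paper's: factor $p(t)$ over $\R$ into linear factors $t+a$ (from real roots $-a<0$) and quadratic factors $(t+a)^2+b^2=t^2+2at+a^2+b^2$ (from conjugate pairs $-a\pm bi$), observe each has positive coefficients, and conclude. The paper does not even invoke Lemma~\ref{lem:product} explicitly, but the substance is identical.

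One point deserves comment. You are right to worry about the leading coefficient $c$; the paper silently ignores it. However, your proposed resolution is not sound: the lemma as stated is simply \emph{false} when $c<0$ (take $p(t)=-(t+1)$, whose unique root $-1$ has negative real part but whose coefficients are both negative), so there is no contradiction to derive and no legitimate ``without loss of generality''. The honest fix is to add the hypothesis that the leading coefficient is positive, which is automatic in every application the paper makes (Ehrhart polynomials have leading coefficient equal to a volume). Alternatively, one can weaken the conclusion to ``all coefficients have the same sign''. Either way, your instinct that something needs to be said here is correct; just don't try to argue it away.
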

\begin{proof} Let $a >0.$
If $-a < 0$ is a real root of $p(t),$ then $t+a$ is a factor of $p(t).$ If $-a + bi$ is a complex root of $p(t)$ for some $b \in \R,$ then $-a-bi$ must be a root of $p(t)$ as well, which implies that 
\[ (t+a-bi)(t+a+bi) = (t^2 + 2a t + a^2 + b^2) \]
is a factor of $p(t).$ Hence, $p(t)$ is a product of factors with positive coefficients. Thus, our conclusion follows.
\end{proof}

We say that a polynomial (with real coefficients) is \emph{negative-real-part-rooted} or \emph{NRPR} if all of its roots have negative real parts. 
The above lemma implies that if $i(P,t)$ is NRPR, then $P$ is Ehrhart positive. Ehrhart polynomials of unit cubes and standard simplices are all trivially NRPR, as they factor into linear polynomials with positive real coefficients. Hence, we would like to rule them out, and are only interested in examples of Ehrhart polynomials that are nontrivially NRPR. 

It turns out that the following theorem which establishes a connection between roots of the $h^*$-polynomial and roots of the Ehrhart polynomial of a polytope is very useful. 
\begin{thm}[\cite{rodriguez}, Theorem 3.2 of \cite{stanleycycleperm}]\label{thm:root} 
	Let $P$ be a $d$-dimensional integral polytope, let $k$ be the degree of the polynomial $h^*_P(z)$ (so that $0 \le k \le d$), and suppose that every root of $h^*_P(z)$ lies on the circle $\{ z \in \C \ : \ |z|=1\}$ in the complex plane. Then there exists a polynomial $f(t)$ of degree $k$ such that
\[ i(P,t) = f(t) \cdot \prod_{i=1}^{d-k} (t+i),\]
and every root of $f(t)$ has real part $-(1+(d-k))/2.$
\end{thm}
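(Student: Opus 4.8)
The plan is to peel off the linear factors of $i(P,t)$ that are forced by $h^*_P$ having small degree, then pin down the remaining factor $f(t)$ by a functional equation, and finally invoke the hypothesis that the zeros of $h^*_P$ lie on $|z|=1$ to locate the zeros of $f$. As a preliminary, I would record that this hypothesis implies $h^*_k=1$ and that $h^*_P$ is \emph{palindromic}, i.e.\ $h^*_i=h^*_{k-i}$ for all $i$: writing $h^*_P(z)=h^*_k\prod_{j=1}^k(z-\rho_j)$ with $|\rho_j|=1$, the constant term $h^*_0=1$ forces $|h^*_k|=1$, hence $h^*_k=1$ (it is a positive integer) and $\prod_j\rho_j=(-1)^k$; then $z^kh^*_P(1/z)=\prod_j(1-\rho_jz)=\prod_j(z-\overline{\rho_j})$, which equals $h^*_P(z)$ because $\{\rho_j\}$ is closed under conjugation, and comparing coefficients gives the palindromy. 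Next, writing $i(P,t)=\sum_{i=0}^k h^*_i\binom{t+d-i}{d}$ via \eqref{equ:h2e} and noting that $\binom{t+d-i}{d}=\frac1{d!}(t-i+1)(t-i+2)\cdots(t-i+d)$ has $t+1,\dots,t+(d-k)$ among its linear factors whenever $0\le i\le k$, one gets $\prod_{i=1}^{d-k}(t+i)\mid i(P,t)$; writing $i(P,t)=\bigl(\prod_{i=1}^{d-k}(t+i)\bigr)f(t)$ and keeping track of the surviving factors gives
\[ f(t)=\frac{1}{d!}\sum_{i=0}^k h^*_i\Bigl(\prod_{\ell=0}^{i-1}(t-\ell)\Bigr)\Bigl(\prod_{\ell=1}^{k-i}\bigl(t+(d-k)+\ell\bigr)\Bigr), \]
each summand of degree $k$ with leading coefficient $h^*_i/d!$; since $\sum_i h^*_i=d!\,\vol(P)>0$ we get $\deg f=k$.

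The next step is the functional equation $f(t)=(-1)^k f\bigl(-t-(d-k+1)\bigr)$. Substituting $-t-(d-k+1)$ for $t$ in $\binom{t+d-i}{d}$ and using the upper-negation identity $\binom{-x}{d}=(-1)^d\binom{x+d-1}{d}$ turns $i(P,-t-(d-k+1))$ into $(-1)^d\sum_{i=0}^k h^*_i\binom{t+(d-k)+i}{d}$, and the palindromy $h^*_i=h^*_{k-i}$, after reindexing $i\mapsto k-i$, identifies this with $(-1)^d i(P,t)$. Substituting $i(P,t)=\bigl(\prod_{i=1}^{d-k}(t+i)\bigr)f(t)$ into the resulting identity $i(P,t)=(-1)^d i(P,-t-(d-k+1))$ and cancelling the product factor, which picks up a sign $(-1)^{d-k}$ under $t\mapsto -t-(d-k+1)$ (so that $(-1)^d(-1)^{d-k}=(-1)^k$), leaves precisely $f(t)=(-1)^k f(-t-(d-k+1))$. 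Consequently the zero set of $f$ is symmetric about the vertical line $\Re(t)=-\tfrac{d-k+1}{2}$; the remaining, and hardest, task is to show the zeros actually lie on it.

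This is where the full strength of the unit-circle hypothesis is needed. Centering via $t=s-\tfrac{d-k+1}{2}$, the displayed formula for $f$ becomes
\[ d!\,f\!\left(s-\tfrac{d-k+1}{2}\right)=\sum_{i=0}^k h^*_i\,A_i(s)\,B_{k-i}(s),\qquad A_j(s)=\prod_{\ell=0}^{j-1}(s-\beta_\ell),\quad B_j(s)=\prod_{\ell=0}^{j-1}(s+\beta_\ell), \]
where $\beta_\ell=\tfrac{d-k+1}{2}+\ell$ runs over an arithmetic progression of positive reals. Thus the claim is equivalent to: the polynomial $\sum_i h^*_i A_i(s)B_{k-i}(s)$ has all its zeros purely imaginary; equivalently, the linear map $z^i\mapsto A_i(s)B_{k-i}(s)$ carries polynomials with all zeros on $|z|=1$ to polynomials with all zeros on the imaginary axis.

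This transfer principle is exactly the substance of the result of Rodriguez-Villegas used here (Theorem~3.2 of \cite{stanleycycleperm}), and it is the part I would cite rather than reprove --- and the main obstacle. The crux of the difficulty is that the map $z^i\mapsto A_i(s)B_{k-i}(s)$ is \emph{not} multiplicative, so one cannot simply handle separately the quadratic factors $z^2-2\cos\theta\,z+1$ (and the possible factor $z+1$) into which the palindromic $h^*_P$ decomposes on the circle; instead one needs the genuine argument of \cite{rodriguez,stanleycycleperm} --- a Hermite--Biehler / self-inversive-polynomial argument, or equivalently a contour-integral computation --- showing that zeros on $|z|=1$ are transported onto the vertical line. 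Granting this, applying it to $h^*_P$ shows every zero of $f$ has real part $-\tfrac{d-k+1}{2}=-\tfrac{1+(d-k)}{2}$, which is the assertion.
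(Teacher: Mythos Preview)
The paper does not supply its own proof of this theorem; it is quoted as a result from \cite{rodriguez} and \cite{stanleycycleperm}, so there is no in-paper argument to compare your proposal against. Your preliminary reductions are correct and well organized: the palindromy of $h^*_P$ from the unit-circle hypothesis, the divisibility $\prod_{i=1}^{d-k}(t+i)\mid i(P,t)$ via \eqref{equ:h2e}, and the functional equation $f(t)=(-1)^k f(-t-(d-k+1))$ giving symmetry of the zero set about the line $\Re(t)=-(d-k+1)/2$ are all sound and match how one would begin.

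The gap is exactly where you flag it. Symmetry about a vertical line does not force the zeros to lie on that line, and for the final step you appeal to ``the result of Rodriguez-Villegas used here (Theorem~3.2 of \cite{stanleycycleperm})'' --- but that \emph{is} the theorem you are asked to prove, so the citation is circular as written. What you need is the underlying lemma of Rodriguez-Villegas in its abstract form: if $p(z)=\sum_{j=0}^k c_j z^j$ has all roots on $|z|=1$, then for any $\alpha\in\C$ with $\Re(\alpha)>(k-1)/2$ the polynomial $\sum_{j=0}^k c_j\prod_{\ell=0}^{j-1}(s-\alpha+\ell)\prod_{\ell=0}^{k-j-1}(s+\alpha-\ell)$ has all roots on the imaginary axis. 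That statement (or an equivalent Hermite--Biehler formulation) is what must be invoked or proved, with $\alpha=(d-k+1)/2$; citing the Ehrhart-polynomial corollary back to itself does not close the argument. If you replace your final citation with an explicit reference to that transfer lemma (or supply its short proof), the proposal becomes a complete, faithful rendering of the argument in \cite{rodriguez,stanleycycleperm}.
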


We say a polytope $P$ is \emph{$h^*$-unit-circle-rooted} if the $h^*$-polynomial $h^*_P(z)$ of $P$ has all of its roots on the unit circle of the complex plane. 
Below we introduce three families of polytopes, and show that each polytope $P$ in these families is $h^*$-unit-circle-rooted. 
Therefore, Ehrhart positivity for these families follows from Theorem \ref{thm:root} and Lemma \ref{lem:neg}. 

\subsubsection{Cross-polytopes} \label{subsubsec:cross}
The $d$-dimensional cross-polytope, denoted by $\Diamond_d,$ is a polytope in $\R^d$ defined by
\[ \Diamond_d := \conv\{ \pm \be_1, \pm \be_2, \dots, \pm \be_d \},\] 
or equivalently by the following linear system:
\[ \pm x_1 \pm x_2 \pm \cdots \pm x_d \le 1.\]
Hence, $i(\Diamond_d, t)$ counts the number of integer solutions to
\[ |x_1| + |x_2| + \cdots  |x_d| \le t.\]
Counting the number of integer solutions with exactly $k$ nonzero $x_i$'s for $k=0,1,2,\dots, d,$ we obtain that
\[ i(\Diamond_d, t) = \sum_{k=0}^d 2^k\binom{d}{k}\binom{t}{k}.\]
Unfortunately, it is \emph{not} clear whether the above expression expands positively in powers of $t$.
We compute its Ehrhart series instead. First, notice that $i(\Diamond_d, t)$ counts the number of integer solutions to
\[ |x_1| + |x_2| + \cdots  |x_d| + y = t.\] Hence, 
\[ i(\Diamond_d, t) = \sum f(a_1) f(a_2) \cdots f(a_d)f(b),\]
where the summation is over all weak compositions $(a_1, \dots, a_d, b)$ of $t$ into $d+1$ parts, $g(b)=1$ for all $b \ge 0$ and $f(a) = 1$ if $a=0$ and $f(a)=2$ if $a>0.$ Therefore,
\[ \sum_{t \ge 0} i(\Diamond_d, t) z^t = \prod_{i=1}^d \left( \sum_{a_i \ge 0} f(a_i) z^{a_i} \right) \cdot \sum_{b \ge 0} z^b = \left( \frac{1+z}{1-z} \right)^d \cdot \frac{1}{1-z} = \frac{(1+z)^d}{(1-z)^{d+1}}.\]
Thus, $(1+z)^d$ is the $h^*$-polynomial of the cross-polytope $\Diamond_d.$ Hence, $\Diamond_d$ is $h^*$-unit-circle-rooted, and thus are Ehrhart positive.

\subsubsection{Certain families of $\Delta_{(1,\bq)}$} \label{subsubsec:delta1q} Let $\bq = (q_1, q_2,\dots, q_d) \in \P^d$ be a sequence of positive integers. For each such a vector $\bq$, we define a simplex
\[ \Delta_{(1, \bq)} := \conv \left\{ \be_1, \be_2, \dots, \be_d, - \sum_{i=1}^d q_i \be_i \right\}.\]
In \cite{conrads}, Conrads studied simplices of this form and showed that $\Delta_{(1, \bq)}$ is reflexive if and only if
\[ q_i \text{ divides } 1 + \sum_{j =1}^d q_j, \quad \text{ for $i=1,2,\dots, d$.}\]
Recently, Braun, Davis and Solus studied $\Delta_{(1,\bq)}$ in their investigation of a Conjecture by Hibi and Ohsugi, and they provided a number-theoretic characterization of the $h^*$-polynomial of $\Delta_{(1,\bq)}$ \cite[Theorem 2.5]{BraDavSol}.
\begin{thm}[Braun-Davis-Solus]\label{thm:comph}
	The $h^*$-polynomial of $\Delta_{(1,\bq)}$ is
\begin{equation}\label{equ:h}
h^*\left( \Delta_{(1,\bq)}, z \right) = \sum_{b=0}^{q_1+q_2+\cdots+q_d} z^{\omega(b)},
\end{equation}
	where
	\[ \omega(b) := b - \sum_{i=1}^d \left\lfloor \frac{q_i b}{1+q_1+\cdots + q_d} \right\rfloor.\]
\end{thm}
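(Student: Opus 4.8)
The plan is to use the standard description of the $h^*$-polynomial of a lattice simplex via the half-open fundamental parallelepiped of the cone over it, and then carry out the integrality analysis explicitly for $\Delta_{(1,\bq)}$. Set $Q := 1 + q_1 + \cdots + q_d$, and list the vertices as $v_1 = \be_1, \dots, v_d = \be_d$, $v_{d+1} = -\sum_{i=1}^d q_i\be_i$. Lift each to $w_i := (v_i,1) \in \R^{d+1}$; since $\Delta_{(1,\bq)}$ is a $d$-simplex, the $w_i$ are linearly independent, so every lattice point of the cone $C := \R_{\ge 0}\langle w_1,\dots,w_{d+1}\rangle$ is uniquely the sum of a point of the half-open parallelepiped $\Pi := \bigl\{\sum_{i=1}^{d+1}\lambda_i w_i : 0 \le \lambda_i < 1\bigr\}$ and a nonnegative-integer combination of the $w_i$. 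As the height-$t$ slice of $C$ contains exactly $i(\Delta_{(1,\bq)},t)$ lattice points, this gives $\Ehr_{\Delta_{(1,\bq)}}(z) = \bigl(\sum_{p\in\Pi\cap\Z^{d+1}} z^{h(p)}\bigr)/(1-z)^{d+1}$, where $h(p)$ is the last coordinate of $p$; comparing with $\Ehr_{\Delta_{(1,\bq)}}(z) = h^*_{\Delta_{(1,\bq)}}(z)/(1-z)^{d+1}$ yields $h^*(\Delta_{(1,\bq)},z) = \sum_{p\in\Pi\cap\Z^{d+1}} z^{h(p)}$ (see e.g.\ \cite{beckrobins}). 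So it remains to enumerate $\Pi\cap\Z^{d+1}$ together with heights.

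Next I would parametrize a point $p = \sum_{j=1}^d \lambda_j w_j + \lambda_{d+1} w_{d+1} \in \Pi$: its first $d$ coordinates are $x_j = \lambda_j - q_j\lambda_{d+1}$ and its height is $h(p) = \lambda_1 + \cdots + \lambda_{d+1}$. Requiring $x_j\in\Z$ with $\lambda_j\in[0,1)$ forces $\lambda_j = \{q_j\lambda_{d+1}\}$ for each $j\le d$, where $\{\cdot\}$ denotes the fractional part. Substituting and rewriting fractional parts via floors, $h(p) = \lambda_{d+1}\bigl(1 + \sum_j q_j\bigr) - \sum_{j=1}^d\lfloor q_j\lambda_{d+1}\rfloor = \lambda_{d+1}Q - \sum_{j=1}^d\lfloor q_j\lambda_{d+1}\rfloor$. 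Since the sum of floors is an integer, $h(p)\in\Z$ iff $\lambda_{d+1}Q\in\Z$, and since $\lambda_{d+1}\in[0,1)$ this holds iff $\lambda_{d+1} = b/Q$ for a unique $b\in\{0,1,\dots,Q-1\}$.

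Finally I would assemble the bijection: for each $b\in\{0,\dots,Q-1\}$, taking $\lambda_{d+1}=b/Q$ and $\lambda_j = \{q_jb/Q\}$ yields a genuine point $p_b\in\Pi\cap\Z^{d+1}$ of height $h(p_b) = b - \sum_{i=1}^d\lfloor q_ib/Q\rfloor = \omega(b)$. Conversely, because the $w_i$ are linearly independent, the coefficients $\lambda_i$ — in particular $\lambda_{d+1}$, hence $b=Q\lambda_{d+1}$ — are determined by $p$, so $b\mapsto p_b$ is a bijection $\{0,1,\dots,Q-1\}\to\Pi\cap\Z^{d+1}$. Plugging into the first paragraph, $h^*(\Delta_{(1,\bq)},z) = \sum_{b=0}^{Q-1} z^{\omega(b)} = \sum_{b=0}^{q_1+\cdots+q_d} z^{\omega(b)}$, which is \eqref{equ:h}. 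As a sanity check, $h^*(\Delta_{(1,\bq)},1)=Q$ agrees with $d!\,\vol(\Delta_{(1,\bq)})$, the absolute value of the determinant of the matrix with columns $w_1,\dots,w_{d+1}$, which equals $1+\sum_i q_i$.

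I do not anticipate a serious obstacle; the only point needing care is the half-open bookkeeping — confirming that imposing $\lambda_j=\{q_j\lambda_{d+1}\}$ together with $\lambda_{d+1}\in\{0,1/Q,\dots,(Q-1)/Q\}$ captures every element of $\Pi\cap\Z^{d+1}$ exactly once — and keeping the floor/fractional-part manipulation in the height computation straight. Everything else is the routine cone-over-a-simplex machinery.
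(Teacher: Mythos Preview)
Your argument is correct and is exactly the standard fundamental-parallelepiped computation for the $h^*$-polynomial of a lattice simplex; the integrality analysis forcing $\lambda_{d+1}=b/Q$ and the height computation $h(p_b)=\omega(b)$ are clean and complete. Note, however, that the present paper does not give its own proof of this theorem --- it is quoted from Braun--Davis--Solus \cite[Theorem 2.5]{BraDavSol} --- and your approach is essentially the same as the one in that reference.
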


Formula \eqref{equ:h} allows us to compute the $h^*$-polynomial for $\Delta_{(1,\bq)}$ with special choices of $\bq$ easily. We give two examples below 
such that $\Delta_{(1,\bq)}$ satisfies the hypothesis of Theorem \ref{thm:root} with $k=d.$ 
\begin{ex}[Standard reflexive simplices] \label{ex:refsimplex}
	If we choose $\bq=\1 =(1, 1, \dots, 1) \in \P^d,$ then $\Delta_{(1,\bq)}$ is the $d$-dimensional \emph{standard reflexive simplex}. Note that in this case, we have that $q_1 + q_2 + \cdots + q_d = d.$
	Furthermore, for each $b \in \{0, 1, 2, \dots, d\},$ one can verify that $w(b) = b.$ Hence, 
	\[ h^*\left( \Delta_{(1,\1)}, z \right) = \sum_{b=0}^{d} z^{b} = 1+ z + z^2 +\cdots + z^d.\]
\end{ex}

\begin{ex}[Payne's construction]\label{ex:payne}
	In \cite{payne}, Payne constructed reflexive simplices that do not have unimodal $h^*$-vectors. His construction is equivalent to the simplices $\Delta_{(1, \bq)}$ with the following choices of $\bq:$ Given $r \ge 0, s \ge 3$ and $k \ge r+2,$ let $d = r+sk$ and
	\[ \bq = (q_1, q_2, \dots, q_d) = (\underbrace{1, 1, \dots, 1}_{sk-1 \text{ times}}, \underbrace{s, s, \dots, s}_{r+1 \text{ times}}).\]
	Applying Theorem \ref{thm:comph}, one can obtain 
	\[ h^*\left( \Delta_{(1,\bq)}, z \right) = (1 + z^k + z^{2k} +\cdots + z^{(s-1)k}) ( 1+ z+ z^2 + \cdots + z^{k+r}).\]
\end{ex}


Both Example \ref{ex:refsimplex} and Example \ref{ex:payne} are $h^*$-unit-circle-rooted. Hence, they are Ehrhart positive. We remark that the Ehrhart positivity of $\Delta_{(1,\bq)}$ considered in Example \ref{ex:payne} was first proved by the author and Solus \cite[Theorem 3.2]{ehrhart-uni-pos}.

\subsubsection{One family of order polytopes} \label{subsubsec:order}
Given a finite poset (partially ordered set) $\cP$, the \emph{order polytope}, denoted by $\cO(\cP)$, is the collection of functions $\x \in \R^\cP$ satisfying
\begin{itemize}
	\item $0 \le x_a \le 1,$ for all $a \in \cP,$ and
	\item $x_a \le x_b$, if $a \le b$ in $\cP.$
\end{itemize}
The order polytope $\cO(\cP)$ was first defined and studied by Stanley \cite{stanley2poset}. 
Here we consider a family of order polytopes constructed from a certain family of posets.

Let $\cP_k$ be the ordinal sum of $k$ copies of $2$ element antichains, equivalently,
$\cP_k$ is the poset on the $2k$-element set $\{ a_{i,j} \ : \ i=1,2 \text{ and } j=1,2,\dots, k\}$ satisfying
\[ a_{i,j} \le a_{i',j'} \text{ if and only if } j < j' \text{ or } (i,j) =(i',j').\]
For any $t \in \N,$ the $t^{th}$ dilation $t \cO(\cP_k)$ of $\cO(\cP_k)$ is the collection of $\x = (x_{i,j} \ : \ i=1,2 \text{ and } j=1,2,\dots,k) \in \R^{2k}$ satisfying
\[ 0 \le x_{i,j} \le t, \quad \text{ and } \quad x_{i,j} \le x_{i',j'} \text{ if } j < j'.\]
Hence, $i(\cO(\cP_k),t)$ counts the number of integer solutions $\x$ satisfying the above two conditions. Note that each solution gives a weak composition $(y_1, z_1, y_2, \dots, y_k, z_k, y_{k+1})$ of $t$ into $2k+1$ parts, where 
\begin{align*}
	y_j =& \min(x_{1,j}, x_{2,j}) - \max(x_{1,j-1}, x_{2,j-1}), \quad \text{ for } j=1,2,\dots, k+1, \\
	z_j =& \max(x_{1,j},x_{2,j}) - \min(x_{1,j}, x_{2,j}) = |x_{1,j}-x_{2,j}|, \quad \text{ for } j=1,2,\dots, k,
\end{align*}
and by convention let $\max(x_{1,0},x_{2,0})=0$ and $\min(x_{1,k+1}, x_{2,k+1})=t.$
	Thus,
	\[ i(\cO(\cP_k),t) = \sum g(y_1)f(z_1)g(y_2)f(z_2) \cdots f(z_k) g(y_{k+1}),\]
where the summation is over all weak compositions of $t$ into $2k+1$ parts, $g(y)=1$ for all $y \ge 0,$ and $f(z) = 1$ if $z=0$ and $f(z)=2$ if $z>0.$ Therefore, similar to the calculation for cross-polytopes, we obtain
\[ \sum_{t \ge 0} i(\cO(\cP_k), t) z^t =  \frac{(1+z)^k}{(1-z)^{2k+1}}.\]
Thus, the $h^*$-polynomial of $\cO(\cP_k)$ is $(1+z)^k.$ 
By Lemma \ref{lem:neg} and Theorem \ref{thm:root}, the order polytope $\cO(\cP_k)$ is Ehrhart positive.

\begin{rem}\label{rem:chain}
	Stanley also defined a ``chain order'' polytope $\cC(\cP)$ for each poset $\cP$ \cite[Definition 2.1]{stanley2poset}, and showed that $\cC(\cP)$ is unimodularly equivalent to $\cO(\cP)$ \cite[Theorem 3.2/(b)]{stanley2poset}, from which it follows that $i(\cC(\cP),t) = i(\cO(\cP),t).$

Therefore, the conclusions we draw above for the order polytope $\cO(\cP_k)$ all hold for the chain polytope $\cC(\cP_k).$
\end{rem}

It turns out that the polytopes studied in \S \ref{subsubsec:cross} and \S \ref{subsubsec:delta1q} are ``reflexive'' polytopes, and the order polytopes studied in \S \ref{subsubsec:order} are ``Gorenstein'' polytopes. These are not coincidences as we will discuss below.

\subsection*{Connection to reflexivity and Gorensteinness} An integral polytope $P$ in $\R^D$ is \emph{reflexive} (up to lattice translation) if the origin is in the interior of $P$ and its \emph{dual}
\[ P^\vee:= \{ \y \in (\R^D)^* \ : \ \langle \y, \x \rangle \le 1 \ \forall \x \in P\} \]
is also an integral polytope, where $(\R^D)^*$ is the dual space of $\R^D.$

It follows from the Macdonald Reciprocity Theorem \cite{macdonald} that if an integral polytope $P$ is reflexive, then the roots of $i(P,t)$ are symmetrically distributed with respect to the line $\{ z \in \C \ : \ \Re(z) = -1/2\}$ in the complex plane. Bey, Henk and Wills show that the converse is true if 
we include polytopes that are unimodularly equivalent to reflexive polytopes \cite[Proposition 1.8]{BeyHenWil}. 
Recently, Heged\"us, Higashitani and Kasprzyk, in their study of roots of Ehrhart polynomials of reflexive polytopes, give the following result 
\cite[Lemma 1.2]{HegHigKas}.
\begin{lem}[Heged\"us-Higashitani-Kasprzyk]\label{lem:reflexiveroot}
	A $d$-dimensional integral polytope $P$ is reflexive (up to unimodular transformation) if and only if the summation of the $d$ roots of $i(P,t)$ equals to $\displaystyle -d/2.$
\end{lem}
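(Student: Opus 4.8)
The plan is to recast the condition on the roots as an identity between the normalized volume of $P$ and the total normalized volume of its facets, and then to exploit the lattice geometry of $P$. Write $i(P,t)=c_dt^d+c_{d-1}t^{d-1}+\cdots$ and let $s_1,\dots,s_d$ be its complex roots. By Vieta's formula, $s_1+\cdots+s_d=-c_{d-1}/c_d$, while by the classical descriptions of the two leading Ehrhart coefficients recalled in the introduction $c_d=\vol(P)$ and $c_{d-1}=\tfrac12\sum_F\vol(F)$, the sum running over the facets $F$ of $P$. Thus $s_1+\cdots+s_d=-d/2$ is exactly the identity
\[
\sum_{F\text{ a facet of }P}\vol(F)=d\cdot\vol(P),
\]
which I call $(\star)$; the lemma becomes the assertion that $P$ is reflexive up to unimodular transformation if and only if $(\star)$ holds.

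For the ``only if'' direction, since $i(P,t)$ is a unimodular invariant I may assume $P$ is reflexive with $\0$ in its interior, so $P=\{\,\x:\langle\u_F,\x\rangle\le1\,\}$, one inequality for each facet $F$, with every $\u_F\in\Z^D$ primitive. Coning each facet to $\0$ dissects $P$ into pyramids with pairwise disjoint interiors, and the normalized volume of the pyramid over $F$ with apex $\0$ equals $\tfrac1d$ times the lattice distance from $\0$ to $\aff(F)$ times $\vol(F)$; for a reflexive polytope that lattice distance is $1$. Summing over the facets gives $\vol(P)=\tfrac1d\sum_F\vol(F)$, i.e.\ $(\star)$. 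One may also bypass this computation: by Macdonald reciprocity, recalled above, reflexivity forces the roots of $i(P,t)$ to be symmetric about the line $\Re(z)=-1/2$, so they pair off as $\{r,-1-r\}$, each pair contributing $-1$, and there are $d/2$ pairs.

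For the ``if'' direction, assume $(\star)$ and translate an interior lattice point of $P$ to the origin, so that $P=\{\,\x:\langle\u_F,\x\rangle\le b_F\,\}$ with $\u_F\in\Z^D$ primitive and each $b_F$ a positive integer---positive because $\0$ lies in the interior and an integer because $\0$ and $\u_F$ are lattice vectors. The lattice distance from $\0$ to $\aff(F)$ is now $b_F$, so the same dissection yields $d\cdot\vol(P)=\sum_Fb_F\,\vol(F)$. Combined with $(\star)$ this gives $\sum_F(b_F-1)\vol(F)=0$ with every summand nonnegative, so $b_F=1$ for all facets $F$. Hence $P=\{\,\x:\langle\u_F,\x\rangle\le1\,\}$ has polar dual equal to the lattice polytope $\conv\{\u_F\}$, so $P$ is reflexive, as wanted.

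The step I expect to be the main obstacle is the pyramid-volume identity with the right normalizations: one must reconcile the relative $(d-1)$-dimensional volume of a facet $F$ (measured against the affine lattice $\Z^D\cap\aff(F)$) with the $d$-dimensional volume of $\conv(\{\0\}\cup F)$, which ultimately rests on the fact that $\Z^D\cap\u^\perp$ has covolume $\|\u\|$ for primitive $\u\in\Z^D$. A slicker route for ``if'' uses \eqref{equ:h2e}, which turns $(\star)$ into $\sum_{j=0}^d(d-2j)h^*_j=0$, equivalently $\sum_{0\le j<d/2}(d-2j)(h^*_j-h^*_{d-j})=0$ with all weights $d-2j$ positive; combining nonnegativity of the $h^*_j$ with the classical head-versus-tail inequalities on $h^*$-vectors (Stanley, Hibi) should force the defects $h^*_j-h^*_{d-j}$ to share one sign and hence all vanish, and palindromy of $h^*_P$ is equivalent to reflexivity up to unimodular transformation by Hibi's criterion. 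In either route the argument quietly uses that $P$ has an interior lattice point---automatic in the reflexive context but genuinely needed, since e.g.\ the Reeve tetrahedron $\conv\{\0,\be_1,\be_2,\be_1+\be_2+4\be_3\}$ has $s_1+\cdots+s_d=-3/2$ while containing no interior lattice point.
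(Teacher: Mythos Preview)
The survey does not prove this lemma; it is quoted from Heged\"us--Higashitani--Kasprzyk without argument, so there is no proof here to compare against. Your route via Vieta's formula and the pyramid decomposition over an interior lattice point is the standard one, and both directions are correct once that interior point is in hand.

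Your closing remark, however, is not a side comment but the heart of the matter: the statement \emph{as printed in the survey} is false without the hypothesis that $P$ contains an interior lattice point, and your example nails it. For $T_4=\conv\{\0,\be_1,\be_2,\be_1+\be_2+4\be_3\}$ one has $h^*(T_4)=(1,0,3,0)$, so $\sum_j(d-2j)h^*_j=3\cdot1+1\cdot 0+(-1)\cdot 3+(-3)\cdot 0=0$ and the sum of the Ehrhart roots is $-3/2=-d/2$; yet $h^*_3=0$ means $T_4$ has no interior lattice point and hence is not unimodularly equivalent to any reflexive polytope. So you have not failed to prove the lemma---you have proved the correct version and exhibited a counterexample to the version written here. (The analogous result of Bey--Henk--Wills, cited in the survey just above this lemma, carries the interior-point hypothesis explicitly.)

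One small comment on your alternative $h^*$-route: the identity $\sum_{j<d/2}(d-2j)(h^*_j-h^*_{d-j})=0$ together with Stanley's and Hibi's inequalities does not by itself force palindromy, as $T_4$ again witnesses; the relevant Hibi inequalities themselves require an interior lattice point. With that hypothesis restored the $h^*$-argument can also be completed.
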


Reflexive polytopes are special cases of a more general family of polytopes: Gorenstein polytopes.
Recall that the \emph{codegree} of $P$ is defined to be 
\[ \codeg(P) := \dim(P) + 1 - \deg\left( h^*_P(z) \right).\]
It is (again) a consequence of the Macdonald Reciprocity Theorem \cite{macdonald} that $\codeg(P)$ is the smallest positive integer $s$ such that $sP$ contains a lattice point in its interior (see, for example, \cite{hibi1992book}). 
A \emph{Gorenstein polytope} is an integral polytope $P$ of codegree $s$ such that $sP$ is a reflexive polytope. 
The work \cite{stanleyhilb} of Stanley gives a nice characterization for the $h^*$-polynomials of Gorenstein polytopes: a $d$-dimensional integral polytope $P$ is a Gorenstein polytope
if and only if its $h^*$-polynomial is \emph{symmetric}, that is, if $h_P^*(z) = \sum_{i=0}^{k} h_i^* z^i$ with $h_k^* \neq 0,$ then $h_i^* = h_{k-i}^*$ for $i=0,1,2,\dots, k.$ Using this, one can easily see that all the examples discussed in \S \ref{subsubsec:cross}, \S \ref{subsubsec:delta1q}, and \S \ref{subsubsec:order} are Gorenstein polytopes.

We now restate Lemma \ref{lem:reflexiveroot} in terms of Gorenstein polytopes.
\begin{lem}\label{lem:gorensteinroot}
	A $d$-dimensional integral polytope $P$ is Gorenstein (up to unimodular transformation) if and only if the summation of the $d$ roots of $i(P,t)$ equals to $\displaystyle -{sd}/{2}$ for some positive integer $s.$

	Furthermore, if the above condition holds, the integer $s$ is the codegree of $P.$
\end{lem}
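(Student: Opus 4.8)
Below is a plan for proving Lemma~\ref{lem:gorensteinroot}.

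The plan is to transport Lemma~\ref{lem:reflexiveroot} from a polytope to its codegree-th dilate. The one ingredient needed beyond that lemma is the elementary dilation identity: for every positive integer $s$ we have $i(sP,t)=i(P,st)$, since both sides count the lattice points of $stP$. If $r_1,\dots,r_d$ denote the roots of $i(P,t)$, so that $i(P,t)=c\prod_{j=1}^d(t-r_j)$ with $c$ the (nonzero) leading coefficient, then $i(sP,t)=c\,s^d\prod_{j=1}^d\bigl(t-r_j/s\bigr)$; hence the roots of $i(sP,t)$ are exactly $r_1/s,\dots,r_d/s$, and in particular the sum of the $d$ roots of $i(sP,t)$ equals $\tfrac1s\sum_{j=1}^d r_j$.

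With this in hand I would first argue the forward direction. If $P$ is Gorenstein up to unimodular transformation, put $s:=\codeg(P)$, a positive integer; by definition $sP$ is reflexive up to unimodular transformation, so Lemma~\ref{lem:reflexiveroot} applied to the $d$-dimensional polytope $sP$ gives that the sum of the $d$ roots of $i(sP,t)$ equals $-d/2$. Combining with the computation above, $\tfrac1s\sum_j r_j=-d/2$, that is, $\sum_{j=1}^d r_j=-sd/2$ with $s=\codeg(P)$.

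For the converse, suppose $\sum_{j=1}^d r_j=-sd/2$ for some positive integer $s$. Then the sum of the roots of $i(sP,t)$ is $\tfrac1s\bigl(-sd/2\bigr)=-d/2$, so by Lemma~\ref{lem:reflexiveroot} the polytope $sP$ is reflexive up to unimodular transformation; in particular $P$ is Gorenstein up to unimodular transformation. It then remains to check that the witness $s$ is forced to be $\codeg(P)$. For this I would use that a lattice polytope has a unique Gorenstein index: concretely, from the reflexivity of $sP$ and Macdonald reciprocity the roots of $i(sP,t)$ are symmetric about $-\tfrac12$, equivalently $i(sP,t)=(-1)^d i(sP,-t-1)$ as polynomials; via the dilation identity this becomes the polynomial identity $i(P,u)=(-1)^d i(P,-u-s)$, and translating it into a statement about the Ehrhart series $\Ehr_P(z)=h^*_P(z)/(1-z)^{d+1}$ (using Ehrhart--Macdonald reciprocity for $P$) shows that $h^*_P(z)$ is palindromic with $d+1-\deg h^*_P(z)=s$, i.e.\ (by Stanley's characterization recalled above) that $P$ is Gorenstein and $\codeg(P)=s$. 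The last assertion of the lemma is then automatic: once $P$ is known to be Gorenstein, the forward direction yields $\sum_j r_j=-\codeg(P)\,d/2$, and comparing this with $\sum_j r_j=-sd/2$ forces $s=\codeg(P)$ since $d\ge1$.

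The only point above that is not entirely routine is the identification $s=\codeg(P)$ in the converse --- equivalently, the uniqueness of the Gorenstein index. I would obtain this from the functional equation $i(P,u)=(-1)^d i(P,-u-s)$ just derived, by passing to the induced identity on $h^*$-polynomials and comparing both orders of vanishing at $z=0$ and degrees (using $h^*_P(0)=1$); alternatively one may simply cite the standard uniqueness of the Gorenstein index. Everything else --- the dilation identity $i(sP,t)=i(P,st)$, the resulting rescaling of the roots, and the two applications of Lemma~\ref{lem:reflexiveroot} --- is immediate. Finally, one should note the degenerate case $d=0$, in which $i(P,t)\equiv1$ has no roots and the statement is vacuous; so $d\ge1$ may be assumed throughout.
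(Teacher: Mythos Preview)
Your proposal is correct and follows essentially the same approach as the paper: both rest on the observation that $t_0$ is a root of $i(P,t)$ if and only if $t_0/s$ is a root of $i(sP,t)$, which transports Lemma~\ref{lem:reflexiveroot} between $P$ and $sP$. The paper's proof is a one-line appeal to this observation, whereas you have been more careful in justifying that the witness $s$ must equal $\codeg(P)$; your comparison argument (apply the forward direction once $P$ is known to be Gorenstein and match the two expressions for $\sum_j r_j$) is the cleanest way to handle this and is all that is needed.
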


\begin{proof}
The conclusion of the lemma follows from the observation that a number $t_0$ is a root of $i(P,t)$ if and only if $t_0/s$ is a root of $i(sP,t).$
\end{proof}

\begin{cor}\label{cor:gorenstein}
	Suppose $P$ is a $d$-dimensional polytope that is $h^*$-unit-circle-rooted. Then $P$ is a Gorenstein polytope (up to unimodular transformation). 
	
	Moreover, if the degree of the $h^*$-polynomial $h^*_P(z)$ is $d,$ then $P$ is reflexive.
\end{cor}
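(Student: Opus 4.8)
The strategy is to reduce Corollary \ref{cor:gorenstein} to Lemma \ref{lem:gorensteinroot} (for the first assertion) and Lemma \ref{lem:reflexiveroot} (for the second), by computing the sum of the roots of $i(P,t)$ from the hypothesis that all roots of $h^*_P(z)$ lie on the unit circle. The main tool is Theorem \ref{thm:root}: if $P$ is $d$-dimensional and $k = \deg h^*_P(z)$, the hypothesis gives a factorization $i(P,t) = f(t)\cdot\prod_{i=1}^{d-k}(t+i)$ where $\deg f = k$ and every root of $f(t)$ has real part $-(1+(d-k))/2$. So I would first read off the multiset of the $d$ roots of $i(P,t)$: the $d-k$ roots $-1,-2,\dots,-(d-k)$ coming from the product, together with the $k$ roots of $f(t)$.

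\textbf{Summing the roots.} The contribution of the linear factors is $-\sum_{i=1}^{d-k} i = -\binom{d-k+1}{2} = -(d-k)(d-k+1)/2$. For the $k$ roots of $f(t)$: they come in complex-conjugate pairs (plus possibly real roots equal to $-(1+(d-k))/2$), and in every case the real part of each root is exactly $-(1+(d-k))/2$; hence their sum is $-k(1+(d-k))/2$ (the imaginary parts cancel, and the sum of roots of a real polynomial is real anyway). Adding the two contributions, the sum of the $d$ roots of $i(P,t)$ is
\[
-\frac{(d-k)(d-k+1)}{2} - \frac{k(d-k+1)}{2} = -\frac{(d-k+1)\bigl((d-k)+k\bigr)}{2} = -\frac{(d-k+1)d}{2} = -\frac{sd}{2},
\]
where I set $s := d-k+1 = \dim(P)+1-\deg(h^*_P(z)) = \codeg(P)$, which is a positive integer since $0\le k\le d$. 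By Lemma \ref{lem:gorensteinroot}, $P$ is Gorenstein (up to unimodular transformation), and the value of $s$ there is precisely the codegree, consistent with the above.

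\textbf{The reflexive case.} If moreover $k = \deg h^*_P(z) = d$, then $s = d-k+1 = 1$, so the sum of the $d$ roots of $i(P,t)$ equals $-d/2$; by Lemma \ref{lem:reflexiveroot}, $P$ is reflexive up to unimodular transformation. (Alternatively, this is the case where the product $\prod_{i=1}^{d-k}(t+i)$ is empty and $i(P,t) = f(t)$ has all roots on the line $\Re(z) = -1/2$.) I don't anticipate a real obstacle here — the one point requiring a line of care is the claim that the sum of the roots of $f(t)$ equals $k$ times the common real part; this follows because non-real roots of a real polynomial occur in conjugate pairs whose sum is twice the common real part, so in all cases the total is $k\cdot(-(1+(d-k))/2)$. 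Everything else is bookkeeping with Theorem \ref{thm:root} and the two lemmas.
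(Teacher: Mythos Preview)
Your proposal is correct and follows essentially the same approach as the paper: use Theorem \ref{thm:root} to identify the $d$ roots of $i(P,t)$ as $-1,\dots,-(d-k)$ together with $k$ roots of real part $-(1+(d-k))/2$, compute their sum as $-d(d-k+1)/2$, and invoke Lemma \ref{lem:gorensteinroot}. The paper handles both assertions at once via Lemma \ref{lem:gorensteinroot} (since $s=1$ there yields reflexivity), whereas you separately cite Lemma \ref{lem:reflexiveroot} for the case $k=d$, but this is a cosmetic difference only.
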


\begin{proof}
	Let $k$ be the degree of $h^*_P(z).$ By Theorem \ref{thm:root}, among all the roots of $i(P,t)$, $k$ of them have real parts $-(1+(d-k))/2,$ and the other $(d-k)$ roots are $-1, -2, \dots, -(d-k).$ 
		As $i(P,t)$ is a polynomial with real coefficients, the sum of roots of $i(P,t)$ is the sum of the real parts of roots of $i(P,t)$, which is 
		\[ -(1+(d-k))/2 \cdot k + \sum_{i=1}^{d-k} (-i) = -\frac{1}{2}d(d-k+1).\] 
		Then the conclusions follow from Lemma \ref{lem:gorensteinroot}.
\end{proof}

Therefore, 
when an integral polytope $P$ is $h^*$-unit-circle-rooted, we not only get Ehrhart positivity for $P$ but can also conclude that $P$ is a Gorenstein polytope of codegree $d-k+1$, where $k$ is the degree of the $h^*$-polynomial of $P.$ 

\subsection{Coefficients with combinatorial meanings}


\subsubsection{Zonotopes} \label{subsubsec:zonotope}
In this part, we introduce a special family of polytopes, \emph{zonotopes}, whose Ehrhart coefficients can be described combinatorially. As a consequence, Ehrhart coefficients of a zonotope are not only positive but also positive integers.

The \emph{Minkowski sum} of two polytopes (or sets) $P$ and $Q$ is 
\[ P + Q := \{ \x + \y \ : \ \x \in P, \ \y \in Q.\}.\]
Let $\v_1, \v_2, \dots, \v_n \in \Z^D$ be a set of integer vectors. The \emph{zonotope} $\cZ(\v_1,\v_2,\dots, \v_n)$ associated with this set of vectors is the Minkowski sum of intervals $[0, \v_i],$ where $[0,\v_i]$ is the line segment from the origin to $\v_i.$ Hence,
\[
  {\cZ}(\v_1,\cdots,\v_n) := \sum_{i=1}^n [0, \v_i] = \left\{ \sum_{i=1}^n c_i \v_i \ : \ 0 \le c_i \le 1 \ \text{for $i=1,2,\dots,n$} \right\}.
\]
In \cite[Theorem 2.2]{stanleyzonotope}, Stanley gives a combinatorial description for the Ehrhart coefficients of zonotopes.
\begin{thm}[Stanley] \label{thm:zonotope}
	The coefficient of $t^k$ in $i(\cZ(\v_1,\cdots,\v_n),t)$ is equal to 
	\[ {\sum_{X} h(X)},\]
	where $X$ ranges over all linearly independent $k$-subsets of $\{\v_1,\dots, \v_n\},$ and $h(X)$ is the greatest common divisor of all $k \times k$ minors of the matrix whose column vectors are elements of $X.$
\end{thm}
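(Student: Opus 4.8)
The plan is to compute the Ehrhart polynomial of $\cZ := \cZ(\v_1,\dots,\v_n)$ directly by exploiting the well-known half-open decomposition of a zonotope. First I would recall the key geometric fact: for a suitable generic choice, the zonotope $\cZ$ decomposes as a disjoint union of half-open parallelepipeds $\Pi_X^{\circ}$, one for each linearly independent subset $X \subseteq \{\v_1,\dots,\v_n\}$, where $\Pi_X$ is the parallelepiped spanned by the vectors in $X$. (This is the zonotopal analogue of Shephard's/Stanley's decomposition; see \cite{stanleyzonotope}.) Since dilation commutes with Minkowski sums, $t\cZ = \cZ(t\v_1,\dots,t\v_n)$, and the same decomposition dilates term by term, so
\[ i(\cZ, t) = \sum_{X} |\, \relint\text{-type half-open } t\Pi_X \cap \Z^D\,| = \sum_X i^{\circ}_X(t), \]
where $i^{\circ}_X(t)$ counts lattice points in the appropriately half-open dilated parallelepiped spanned by $\{t\v : \v \in X\}$.

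The second step is to count lattice points in a half-open parallelepiped spanned by $|X| = k$ independent integer vectors. Here I would use the standard fact that the number of lattice points in the half-open parallelepiped $\{\sum_{\v \in X} c_\v (t\v) : 0 \le c_\v < 1\}$, intersected with the lattice in the sublattice $\lin(X) \cap \Z^D$, equals $t^k$ times the index of the sublattice generated by $X$ in $\lin(X)\cap \Z^D$ — and that index is exactly $h(X)$, the gcd of the maximal minors of the matrix with columns $X$. Thus $i^\circ_X(t) = h(X)\, t^k$. Summing over all $X$ and collecting by $k = |X|$ gives precisely
\[ i(\cZ,t) = \sum_{k \ge 0} \Bigl( \sum_{\substack{X \text{ lin.\ indep.}\\ |X|=k}} h(X) \Bigr) t^k, \]
which is the claimed formula.

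There are two technical points to handle carefully. The first is verifying the half-open decomposition of the zonotope: one must fix a generic linear functional (or a generic perturbation direction) and assign to each tile $\Pi_X$ precisely those bounding facets that are "visible" from the chosen direction, so that the half-open tiles partition $\cZ$ exactly, with no double-counting and no omission. The subtlety is that this requires the vectors to be in "general position" in a mild sense, or equivalently one must argue that the count is independent of the generic choice; a clean way is to first prove the formula for vectors in general position and then argue both sides are continuous/polynomial and the formula is preserved under degenerations. The second point, which I expect to be the main obstacle to write cleanly, is the lattice-point count in the half-open parallelepiped: one needs that the number of lattice points of $\Z^D$ lying in $t\Pi_X^\circ$ equals $h(X) t^k$ exactly, which hinges on identifying $h(X)$ with $[\,\lin(X)\cap\Z^D : \Z X\,]$ via the Smith normal form of the matrix $M_X$ whose columns are the elements of $X$, and then observing that scaling the fundamental domain by $t$ multiplies the count by $t^k$ because the dilated region tiles $\lin(X)$ under the \emph{same} sublattice $\Z X$. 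Once these two ingredients are in place, the theorem follows by summation; everything else is bookkeeping.
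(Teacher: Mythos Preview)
Your proposal is correct and matches the paper's approach: decompose the zonotope into disjoint half-open parallelepipeds indexed by the linearly independent subsets $X$, then observe that the lattice-point count in each dilated piece is $h(X)\,t^{|X|}$ (the index of $\Z X$ in $\lin(X)\cap\Z^D$, computed via Smith normal form, times the dilation factor). The only cosmetic difference is your first technical point: rather than a generic linear functional plus a degeneration argument, the paper (following Stanley's Lemma~2.1) handles the tiling directly by choosing signs $\epsilon_i\in\{-1,1\}$ so that $C_X$ is generated by $\epsilon_1\v_{j_1},\dots,\epsilon_k\v_{j_k}$, which yields an exact disjoint decomposition for arbitrary integer vectors with no genericity hypothesis needed.
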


The main ingredient for the proof of the above theorem is that $\cZ(\v_1, \dots, \v_n)$ can be written as a disjoint union of half open parallelepiped $C_X$ ranging over all linearly independent subsets $X=\{\v_{j_1},\dots, \v_{j_k}\}$ of $\{\v_1,\dots, \v_n\},$ where $C_X$ is generated by $\epsilon_1 \v_{j_1}, \dots, \epsilon_i \v_{j_k}$ for certain choices of $\epsilon_1, \dots, \epsilon_k \in \{-1, 1\}.$ (See \cite[Lemma 2.1]{stanleyzonotope}.) The theorem then follows from the fact that the number of lattice points in the half open parallelepiped $C_X$ is the volume of $C_X$, which can be calculated by $h(X).$

The simplest examples of zonotopes are unit cubes considered in \S \ref{cube}. We may recover the Ehrhart polynomial of a unit cube using Theorem \ref{thm:zonotope}. However, a more interesting example is the \emph{regular permutohedron}.

\begin{ex} \label{ex:regular} 
	The \emph{regular permutohedron}, denoted by $\Pi_d$, is the convex hull of all permutations in $\fS_{d+1};$ that is, 
\[  \Pi_{d} 
:= \conv\{(\sigma(1),\sigma(2),\cdots,\sigma(d+1))\in \R^{d+1}: \sigma\in \fS_{d+1}\}. \]
It is straightforward to check that $\Pi_d$ is a translation of the zonotope
\[ \sum_{1 \le i < j \le d+1} [0, \be_j - \be_i].\]
For any subset $X$ of $\Phi_d := \{ \be_j - \be_i \ : \ 1 \le i < j \le d+1\},$ we let $G_X$ be the graph on vertex set $[d+1]$ and $\{i,j\}$ (with $i < j$) is an edge if and only if $\be_j -\be_i \in X.$ Then it follows from matroid theory that $X$ is linearly independent if and only if $G_X$ is a forest on $[d+1]$. (Recall that a \emph{forest} is a collection of trees, or equivalently, is acyclic.) Furthermore, if $X$ is linearly independent, then $G_X$ is a forest of $d+1-|X|$ trees, and $h(X)$ (described in Theorem \ref{thm:zonotope}) is $1.$

Therefore, we conclude that the coefficient of $t^k$ in $i(\Pi_d, t)$ counts the number of forests on $[d+1]$ that contain exactly $d+1-k$ trees. 
Therefore, we can compute, for example,
\[ i(\Pi_3, t) = 16 t^3 + 15 t^2 + 6t + 1.\]
\end{ex}

\subsubsection{Positivity of a generalized Ehrhart polynomial} The polynomial we discuss in this part is not exactly an Ehrhart polynomial. However, it is closely related to the result on zonotopes we have presented in the last part, and thus is included here.
Galashin, Hopkins, McConville and Postnikov, in their study of root system chip firing \cite{GalHopMccPos2017a}, considered the following lattice points counting problem: Given an integral polytope $P$ and a set of integer vectors $\v_1, \v_2, \dots, \v_n,$ describe the number of lattice point in 
\[ P + \v_1 + \v_2 + \cdots + \v_n = P + \cZ(\v_1, \dots, \v_n). \]
Extending Stanley's idea of decomposing zonotopes into half open parallelepiped, they show \cite[Proof of Theorem 16.1]{GalHopMccPos2017a} that $P + \cZ(\v_1, \dots, \v_n)$ can be written as disjoint union of sets in the form of $F + C_X$ where $F$ is an open face of $P$ and $C_X$ is a half open parallelepiped determined by a linearly independent set $X$ of $\{\v_1,\dots, \v_n\}$. 
\begin{thm}[Galashin-Hopkins-McConville-Postnikov]\label{thm:genzonotope}
Suppose $P$ is an integral polytope in $\R^D$ and $\v_1, \dots, \v_n \in \Z^D$ is a set of integer vectors. Let $\cZ = \cZ(\v_1, \dots, \v_n)$. 
For any $\t = (t_1, \dots, t_n) \in \N_n$, we define $\t \cZ = \cZ(t_1 \v_1, \dots, t_n \v_n).$ Then there exists a polynomial $L(\t) = L(t_1, \dots, t_n)$ in $n$ variables with nonnegative integer coefficients such that
$|(P + \t \cZ) \cap \Z^D| = L(\t).$

In particular, if we take $\t = (t, t, \dots, t),$ then 
\[ |(P + \t \cZ) \cap \Z^D| = |(P+ t \cZ) \cap \Z^D| = L(t, t, \dots, t) \] 
is a polynomial in $t$ of degree $\dim(\cZ)$ with positive integer coefficients.
\end{thm}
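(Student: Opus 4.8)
The plan is to derive everything from the half-open parallelepiped decomposition mentioned just before the statement, combined with an argument that chops each parallelepiped into unit boxes. First I would invoke the decomposition of \cite[Proof of Theorem 16.1]{GalHopMccPos2017a}, which extends Stanley's decomposition of a zonotope: one writes $P + \cZ = \bigsqcup_{(F,X)} (F + C_X)$ as a \emph{finite disjoint} union, where $F$ ranges over the relatively open faces of $P$, $X$ ranges over certain linearly independent subsets of $\{\v_1, \dots, \v_n\}$, and $C_X = \{\sum_{j \in X} c_j \epsilon_j \v_j : 0 \le c_j < 1\}$ is a half-open parallelepiped with signs $\epsilon_j \in \{\pm 1\}$. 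The feature I would want to pin down is that this construction can be carried out using a single generic linear functional, so that the set of pairs $(F,X)$ occurring and the signs $\epsilon_j$ depend only on the rays $\R_{\ge 0}\v_j$ (together with the matroid and face structure), not on the lengths of the $\v_j$. Granting this, applying the same decomposition to the scaled configuration $(t_1\v_1, \dots, t_n\v_n)$, $\t = (t_1, \dots, t_n) \in \P^n$, gives $P + \t\cZ = \bigsqcup_{(F,X)} (F + C_X^\t)$ with the \emph{same} index set of pairs, where $C_X^\t$ is the parallelepiped spanned by the vectors $\epsilon_j t_j \v_j$ for $j \in X$.

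Second, I would count the lattice points in one piece $F + C_X^\t$. Splitting each interval $[0, t_j)$ into $t_j$ half-open unit intervals exhibits $C_X^\t$ as a disjoint union of $\prod_{j \in X} t_j$ translates of $C_X$ by the integer vectors $\sum_{j \in X} a_j \epsilon_j \v_j$, $0 \le a_j < t_j$; since translation by a lattice vector preserves lattice-point counts, $|(F + C_X^\t) \cap \Z^D| = (\prod_{j \in X} t_j) \cdot m(F,X)$ where $m(F,X) := |(F + C_X) \cap \Z^D|$ is a nonnegative integer. Summing over all pairs, $|(P + \t\cZ) \cap \Z^D| = \sum_{(F,X)} m(F,X) \prod_{j \in X} t_j =: L(\t)$, which is visibly a polynomial in $t_1, \dots, t_n$ (of degree at most $1$ in each variable) with nonnegative integer coefficients. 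The identity holds at least for $\t \in \P^n$; for $\t$ with some coordinates equal to $0$ I would run the same decomposition on the degenerate configuration, in which the corresponding vectors $t_j\v_j = \0$ drop out, matching the vanishing of exactly those monomials of $L$ that contain those $t_j$.

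Third, for the one-variable specialization $\t = (t, \dots, t)$ one gets $L(t, \dots, t) = \sum_k \big(\sum_{(F,X):\,|X| = k} m(F,X)\big) t^k$, again with nonnegative integer coefficients, and of degree at most the rank of $\v_1, \dots, \v_n$, which is $\dim\cZ$, since the occurring $X$ are linearly independent. To see the degree is exactly $\dim\cZ$ with positive leading coefficient, I would compare with a vertex $v$ of $P$: then $v \in \Z^D$ and $v + t\cZ \subseteq P + t\cZ$, so $|(P + t\cZ) \cap \Z^D| \ge |(v + t\cZ) \cap \Z^D| = |(t\cZ) \cap \Z^D| = i(\cZ, t)$, which by Stanley's Theorem \ref{thm:zonotope} has degree exactly $\dim\cZ$ (leading coefficient $\sum_{|X| = \dim\cZ} h(X) \ge 1$). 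Since a nonnegative-coefficient polynomial dominating $i(\cZ,t)$ for all $t \ge 0$ must have degree at least $\dim\cZ$, the degree is exactly $\dim\cZ$ and its top coefficient is a positive integer. For positivity of \emph{all} coefficients of $L(t, \dots, t)$ I would invoke the sharper fact, also part of \cite[Theorem 16.1]{GalHopMccPos2017a}, that each linearly independent $X \subseteq \{\v_1, \dots, \v_n\}$ occurs paired with some vertex $w$ of $P$, contributing $m(\{w\}, X) = h(X) \ge 1$ to the coefficient of $t^{|X|}$.

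The substantive input, and the step I expect to be the main obstacle, is the polyhedral decomposition of $P + \cZ$ into translated half-open parallelepipeds together with its insensitivity to rescaling the generators $\v_j$ — this is exactly the content of \cite[Theorem 16.1]{GalHopMccPos2017a}. Everything after that is bookkeeping: the unit-box chopping, the observation that the resulting count is the polynomial $L$, and the comparison with Stanley's zonotope formula to control the degree.
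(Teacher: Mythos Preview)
Your proposal is correct and matches the paper's approach: the paper does not give a self-contained proof but simply cites the half-open decomposition $P+\cZ=\bigsqcup (F+C_X)$ from \cite[Proof of Theorem~16.1]{GalHopMccPos2017a} and remarks that both parts of the theorem are consequences of that technique, which is exactly the skeleton you flesh out (chopping $C_X^{\t}$ into $\prod_{j\in X} t_j$ lattice translates of $C_X$ and summing). The one step you should be slightly careful with is the positivity of \emph{every} coefficient of $L(t,\dots,t)$: rather than asserting that each independent $X$ is paired with a vertex, a cleaner route already implicit in your setup is to pick, for each $0\le k\le \dim\cZ$, a subset $S$ of the generators of rank $k$, specialize $t_i=0$ for $i\notin S$ and $t_i=t$ for $i\in S$, and note that nonnegativity of the coefficients gives $[t^k]L(t,\dots,t)\ge [t^k]L_S(t,\dots,t)$, whose right-hand side is the positive leading coefficient you already extracted via comparison with $i(\cZ_S,t)$.
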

Note that the second part of the above theorem was not explicitly stated in \cite[Theorem 16.1]{GalHopMccPos2017a}; but it was a consequence of the techniques used in its proof.

One sees that if we choose $P$ to be the origin, then the above theorem recovers the Ehrhart positivity of zonotopes. However, in contrast with Stanley's results, no explicit formulas were given in \cite{GalHopMccPos2017a} for the positive/nonnegative integer coefficients asserted in Theorem \ref{thm:genzonotope}. 
Recently, Hopkins and Postnikov \cite{HopPos} analyzed techniques used in \cite{GalHopMccPos2017a} further, and provided the desired explicit formula, completing the generalization of Theorem \ref{thm:zonotope}. 

\begin{thm}[Hopkins-Postnikov]
	The homogeneous degree $k$ part of the polynomial $L(\t)$ assumed by Theorem \ref{thm:genzonotope} is given by
	\[\sum_X |\quot_X(P) \cap \quot_X(\Z^D)| \cdot h(X) \cdot \prod_{\v_i \in X} t_i,\]
	where $X$ ranges over all linearly independent $k$-subsets of $\{\v_1,\dots, \v_n\},$ $\quot_X: \R^D \to \R^D/\rm{span}_\R(X)$ is the canonical quotient map, and $h(X)$ is the greatest common divisor of all $k \times k$ minors of the matrix whose column vectors are elements of $X.$

\end{thm}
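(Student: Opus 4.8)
The plan is to promote the geometric decomposition behind Theorem~\ref{thm:genzonotope} to an exact lattice-point count, bookkept according to the quotient maps $\quot_X$. Recall from \cite{GalHopMccPos2017a} that, after fixing an ordering of $\v_1, \dots, \v_n$, there is a disjoint decomposition
\[
 P + \t\cZ \;=\; \bigsqcup_{(F,X)} \big(F + C_X^{\t}\big),
\]
where $(F,X)$ runs over a set of ``compatible'' pairs, each consisting of an open face $F$ of $P$ and a linearly independent subset $X \subseteq \{\v_1, \dots, \v_n\}$, and $C_X^{\t}$ is a half-open parallelepiped spanned by $\{\,t_i\v_i : \v_i \in X\,\}$ (with a prescribed choice of signs and of which facets to include). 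Hence $L(\t) = \sum_{(F,X)} |(F + C_X^{\t}) \cap \Z^D|$, and it is enough to evaluate each summand and then to gather the contributions with $|X| = k$.

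Fix a compatible pair $(F,X)$ with $|X| = k$ and let $\quot_X \colon \R^D \to \R^D/\mathrm{span}_\R(X)$ be the quotient map. Its restriction to $\Z^D$ has kernel the saturated rank-$k$ sublattice $\Lambda_X := \Z^D \cap \mathrm{span}_\R(X)$, so $\quot_X(\Z^D)$ is a genuine (discrete) lattice and $\quot_X(P) \cap \quot_X(\Z^D)$ is finite. The structural heart of the argument is that the decomposition can be arranged so that, for every occurring pair, $\quot_X$ is injective on $F$ and the fiber of $F + C_X^{\t}$ over a point of $\quot_X(F)$ is an honest translate of $C_X^{\t}$. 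Granting this: a translate of a half-open parallelepiped meets any coset of $\Lambda_X$ (inside the relevant affine fiber) in exactly $[\Lambda_X : \bigoplus_{\v_i \in X} t_i\Z\v_i]$ points, and this index equals the $k$-th determinantal divisor — the $\gcd$ of the $k \times k$ minors — of the matrix with columns $\{t_i\v_i : \v_i \in X\}$, which by Smith normal form is $\big(\prod_{\v_i \in X} t_i\big)\,h(X)$ (the same computation that underlies Theorem~\ref{thm:zonotope}). Summing over the points of $\quot_X(\Z^D)$ lying in $\quot_X(F)$ gives
\[
 \big|(F + C_X^{\t}) \cap \Z^D\big| \;=\; h(X)\,\Big(\prod_{\v_i \in X} t_i\Big)\,\big|\quot_X(F) \cap \quot_X(\Z^D)\big|.
\]
Each summand is thus a nonnegative constant times the squarefree monomial $\prod_{\v_i \in X} t_i$, hence homogeneous of degree $|X|$; so the homogeneous degree-$k$ part of $L(\t)$ is the sum of the right-hand side over all compatible pairs with $|X| = k$.

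It remains to carry out the sum over $F$ for a fixed linearly independent $k$-set $X$. Here the second structural fact enters: the faces $F$ that occur with $X$ descend under $\quot_X$ to a partition of the ``shadow'' $\quot_X(P)$ (concretely, they are the open faces of a single face of $P$ on which $\quot_X$ restricts to a bijection onto $\quot_X(P)$). Consequently $\sum_{F} |\quot_X(F) \cap \quot_X(\Z^D)| = |\quot_X(P) \cap \quot_X(\Z^D)|$, and the degree-$k$ part of $L(\t)$ becomes
\[
 \sum_{X} \big|\quot_X(P) \cap \quot_X(\Z^D)\big| \cdot h(X) \cdot \prod_{\v_i \in X} t_i,
\]
the sum over linearly independent $k$-subsets $X$, as claimed. (As a byproduct, the same bookkeeping shows $L(\t)$ is multilinear in $t_1,\dots,t_n$.)

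The main obstacle is exactly the establishment of the two structural facts used above: that the half-open decomposition of \cite{GalHopMccPos2017a} can be chosen so that (i) inside each occurring piece $F + C_X^{\t}$ the base $F$ is transverse to $\mathrm{span}_\R(X)$ and the fibers are genuine translates of $C_X^{\t}$, and (ii) the faces $F$ paired with a fixed $X$ project to an exact face decomposition of $\quot_X(P)$. Proving these requires re-deriving the decomposition carefully enough to keep track of how the open and closed facets of each $C_X$ and of each face $F$ are allocated — which is precisely the refinement of the arguments of \cite{GalHopMccPos2017a} carried out in \cite{HopPos}. Everything else — discreteness of $\quot_X(\Z^D)$, the Smith-normal-form identity for $h(X)$, translation-invariance of lattice counts in half-open parallelepipeds, and the homogeneity bookkeeping — is routine.
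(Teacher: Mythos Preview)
The paper does not actually prove this theorem: it is a survey, and the result is simply cited from \cite{HopPos} with the remark that Hopkins and Postnikov ``analyzed techniques used in \cite{GalHopMccPos2017a} further.'' Your sketch is consistent with that description and correctly identifies the architecture of the argument --- decompose $P+\t\cZ$ into pieces $F+C_X^{\t}$, count lattice points in each piece via the index computation $[\Lambda_X:\bigoplus t_i\Z\v_i]=h(X)\prod t_i$, and then collapse the sum over $F$ for fixed $X$ using the fact that the compatible faces partition the shadow $\quot_X(P)$.

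That said, you are explicit that the two ``structural facts'' (transversality of $F$ to $\mathrm{span}_\R(X)$ with honest parallelepiped fibers, and the partition of $\quot_X(P)$ by the projected faces) are precisely what you are \emph{not} proving, and you defer them to \cite{HopPos}. This is honest, but it means your write-up is a roadmap rather than a proof: the entire content of the theorem over and above Theorem~\ref{thm:genzonotope} lies in establishing those facts, and without them the displayed identity $|(F+C_X^{\t})\cap\Z^D|=h(X)(\prod t_i)|\quot_X(F)\cap\quot_X(\Z^D)|$ is not justified (a priori $F$ could fail to be transverse to $\mathrm{span}_\R(X)$, in which case the fiber picture breaks down). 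So as a reduction your argument is sound and matches the intended approach, but it does not stand on its own.
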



\subsection{Higher integrality conditions} \label{subsec:higher}

In this part, we will introduce families of polytopes whose Ehrhart coefficients are always volumes of certain projections of the original polytopes and are hence positive. 

\subsubsection{Cyclic polytopes}
We start with a well-known family of polytopes: \emph{cyclic polytopes}:
The \emph{moment curve} in $\R^d$ is defined by 
\[\nu_d: \R \to \R^d, x \mapsto \nu_d(u) = (u, u^2, \dots, u^d).\] 
Let $U = \{u_1, \dots, u_n \}_<$ be a linear ordered set. Then the \emph{cyclic polytope} $C_d(U) = C_d(u_1, \dots, u_n)$ is the convex hull of $n > d$ distinct points $\nu_d(t_i), 1 \le i \le n,$ on the moment curve:
\[ C_d(U) := \conv \{\nu_d(u_1), \nu_d(t_2), \dots, \nu_d(u_n) \}.\]
Cyclic polytopes form an interesting family of polytopes. For instance, its facets are determined by the Gale evenness condition \cite[Theorem 0.7]{zie}, and the number of $i$-dimensional faces of $C_d(U)$ (where $|U|=n$) is the upper bound for the number of $i$-dimensional faces of all $d$-dimensional polytopes with $n$ vertices \cite{mcmullen1970}. 

The following theorem on the Ehrhart polynomial of integral cyclic polytopes was initially conjectured in \cite{beck-deloera-et2005} by Beck, De Loera, Develin, Pfeifle and Stanley, and then proved in \cite{cyclic} by the author.

\begin{thm}[L.] \label{thm:cyclic}
For any $d$-dimensional integral cyclic polytope $P = C_d(U) \subset \R^d,$ we have that 
\begin{equation}\label{equ:cyclic}
  i(P, t) =  \vol_d (P) t^d + i(\pi(P), t) = \sum_{k=0}^d \vol_{k} \left(\pi^{(d-k)}(P)\right) t^k,
\end{equation}
where $\pi^{(d-k)}:\R^d \to \R^{k}$ is the map that ignores the last $d-k$ coordinates of a point, and $\vol_k(Q)$ is the volume of $Q$ in the $k$-dimensional space $\R^k.$
\end{thm}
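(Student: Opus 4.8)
The plan is to exploit the special combinatorial structure of cyclic polytopes — the Gale evenness condition describing their facets — together with a triangulation of $C_d(U)$ and an inductive argument on dimension. The key observation is that the right-hand side of \eqref{equ:cyclic} is a telescoping identity: writing $P' = \pi(P) = \pi^{(1)}(P) \subset \R^{d-1}$, it suffices to prove the single relation
\[ i(P,t) = \vol_d(P)\, t^d + i(P', t), \]
since then iterating the same relation in decreasing dimension (noting that the projection of a cyclic polytope is again a cyclic polytope, because the moment curve projects to a lower moment curve) yields the full sum $\sum_k \vol_k(\pi^{(d-k)}(P)) t^k$, the constant term being $\vol_0$ of a point, namely $1$.

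First I would set up a triangulation of $C_d(U)$ and analyze how lattice points of $tP$ distribute over the ``lower'' and ``upper'' facets relative to the last coordinate direction $\be_d$. The Gale evenness condition tells us precisely which $(d-1)$-subsets of $U$ span facets, and — crucially — it tells us that the ``lower facets'' (those visible from $-\infty \cdot \be_d$) and the ``upper facets'' project bijectively and compatibly onto $C_{d-1}(U) = P'$. The strategy is then a fibration argument: for each lattice point $\bar{\x} \in tP' \cap \Z^{d-1}$, count the lattice points of $tP$ lying in the vertical fiber over $\bar{\x}$; this count is (up to boundary bookkeeping) the lattice length of a segment, whose total contribution should assemble into $\vol_d(P) t^d + (\text{correction})$, where the correction is exactly what is needed so that the points lying on the lower boundary reproduce $i(P',t)$. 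Making this bookkeeping exact — deciding how to half-open the fibers so that each lattice point of $tP$ is counted once and the error term collapses cleanly to $i(P',t)$ rather than to $i(P',t)$ plus lower-order junk — is where the combinatorics of the moment curve (in particular, the fact that vertical lines meet $\partial C_d(U)$ in a very controlled way) must be used in an essential way.

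The main obstacle, and the technical heart of the argument, will be controlling the fiber lengths: a priori the length of the vertical segment $tP \cap (\{\bar\x\} \times \R)$ is a piecewise-linear, not linear, function of $\bar\x$, so summing it over $\bar\x \in tP' \cap \Z^{d-1}$ does not obviously produce a polynomial with the claimed volume leading term. One must show that the ``upper envelope minus lower envelope'' function, when summed against lattice points, contributes $\vol_d(P)\,t^d$ exactly (no lower-order terms) — this is really a statement that the discrepancy between the discrete sum and the integral $\int_{tP'} (\text{fiber length})\, d\bar\x = \vol_d(tP) = \vol_d(P)t^d$ is accounted for precisely by the boundary lattice points, which in turn reassemble into $i(P',t)$. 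I would expect to handle this by choosing the half-open decomposition of $tP$ so that the ``closed bottom, open top'' convention makes the fiber over $\bar\x$ in the \emph{interior} relative to $P'$ contribute its full lattice length while fibers over boundary points of $P'$ are handled by the inductive hypothesis applied to faces; a careful induction, using that every proper face of a cyclic polytope that is not a vertex is itself (affinely, hence lattice-equivalently after the projection) a lower cyclic polytope, should close the loop. A secondary subtlety is that $\pi^{(d-k)}(P)$ need not be a \emph{lattice} polytope in the naive sense, so ``$\vol_k$'' here must be interpreted as Euclidean volume in $\R^k$ (as the statement specifies), and one should check the projected polytope still behaves well enough — but this is minor compared to the fiber-length estimate.
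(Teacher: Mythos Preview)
Your overall architecture matches the paper's: reduce to the single telescoping identity $i(P,t) = \vol_d(P)\,t^d + i(\pi(P),t)$, triangulate $C_d(U)$ into cyclic simplices, and for each such simplex count the lattice points in the region obtained by deleting the lower envelope (so that what remains, together with the lower envelope, accounts for $i(P',t)$ via the projection $\pi$). Where you diverge is in \emph{how} you propose to count those points. You suggest fibering over $tP' \cap \Z^{d-1}$, imposing a closed-bottom/open-top convention on vertical fibers, and closing the bookkeeping by an induction on faces. The paper does something sharper and more combinatorial: it takes the half-open simplex (cyclic simplex minus its lower envelope) and decomposes it into $d!$ \emph{signed} half-open convex sets $S_\sigma$, one for each permutation $\sigma \in \fS_d$; for each $\sigma$ the lattice-point count of $tS_\sigma$ has an explicit closed formula in $t$ and $\sigma$, and summing these $d!$ formulas collapses algebraically to exactly $\vol_d(P)\,t^d$.

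The gap in your proposal is precisely the step you yourself flag as the ``main obstacle.'' For a generic integral simplex there is no reason the sum of lattice fiber-lengths over $tP' \cap \Z^{d-1}$ should equal $\vol_d(P)\,t^d$ with \emph{no} lower-order terms; Euler--Maclaurin-type corrections normally appear, and your half-open convention alone does not kill them. Your proposed escape route --- induction on faces, using that faces of $C_d(U)$ are again cyclic polytopes --- does not work as stated: a face of $C_d(U)$ is a simplex whose vertices lie on the moment curve in $\R^d$, but it is not a cyclic polytope $C_k(U')$ sitting in some $\R^k$ in a way that lets you invoke the inductive hypothesis on it. What actually makes cyclic simplices special is the Vandermonde structure of their vertex coordinates, and the paper's $d!$ signed decomposition is exactly the device that converts that algebraic structure into the needed exact cancellation. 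Your fibration picture is a correct heuristic for why the theorem should hold, but to make it a proof you would need a concrete replacement for this $\fS_d$-indexed decomposition, and none is supplied.
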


The first step of the proof is to reduce the problem to simplices by using triangulations. For the simplex case, we consider the set obtained by removing the lower envelope of $C_d(U)$ (with $|U|=d+1$), and we decompose this set into $d!$ signed (convex) half-open sets $S_\sigma$, each of which corresponds to a permutation $\sigma$ in the symmetric group $\fS_d$. One important feature of this decomposition is that the number of lattice points in each piece $S_\sigma$ can be expressed in a simple formula involving the permutation $\sigma$, which makes it possible to compute the summation of all $d!$ terms.

\subsubsection{$k$-integral polytopes} \label{subsubsec:kintegral}

Since the work in \cite{cyclic}, the author generalized the family of integral cyclic polytopes to a bigger family of integral polytopes, ``lattice-face polytopes'', and showed that their Ehrhart polynomials are also in the simple form of \eqref{equ:cyclic} \cite{lattice-face, note-lattice-face}.
Later in \cite{high-integral}, the author improved her results by introducing a notion of ``higher integrality'', which we will detail below. 

Recall that a lattice point is also called an \emph{integral} point. A point can be considered as a $0$-dimensional affine space. We first extend this concept of integrality to higher dimensional affine spaces: 
An $\ell$-dimensional affine space $W$ in $\R^d$ is \emph{integral} if
\[\pi^{(d-\ell)}(W \cap \Z^d) = \Z^{\ell}.\]
Note that this definition with $\ell=0$ is consistent with the original definition of an integral point.
\begin{ex}[lines in $\R^2$] \label{ex:1integral}
See the left side of Figure \ref{fig:higher} for examples of $1$-dimensional affine space in $\R^2.$ The black lines are integral while the red lines are \emph{not} integral. For the slanted red line, say $L_1,$ we have $\pi^{(2-1)}(L_1 \cap \Z^2) \cong \Z/4\Z.$ For the vertical red line, say $L_2,$ we have $\pi^{(2-1)}(L_2 \cap \Z^2) \cong \Z^0.$
\begin{figure}[htp]
     \scalebox{0.35}{\input{1integral.pstex_t}} \includegraphics[scale=0.55, viewport=90 125 420 315,clip]{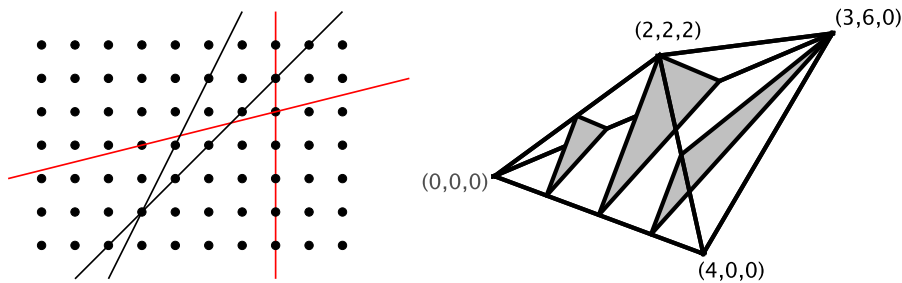}
 \caption{Examples of higher integrality conditions.}
 \label{fig:higher}
 \end{figure}
\end{ex}
Note that in the above example, even though $L_1$ is not integral, after the projection, we still get a $1$-dimensional lattice, which has the same dimension as $L_1.$ In this case, we say $L_1$ is \emph{in general position}. On the contrary, $L_2$ is \emph{not} in general position.

\begin{defn}
Suppose $0 \le k \le d.$ A $d$-dimensional polytope is \emph{$k$-integral} $P$ if for any face $F$ of $P$ of dimension less than or equal to $k$, the affine hull $\aff(F)$ of $F$ is integral.

In particular, when $k = d,$ we call $P$ a \emph{fully integral} polytope. 
\end{defn}

\begin{rem}
	With the above definition, \emph{lattice-face polytopes}, introduced in \cite{lattice-face, note-lattice-face}, can be defined as polytopes that can be triangulated into fully integral simplices, which is a property any (integral) cyclic polytope has. 
Therefore, any cyclic polytope or lattice-polytope is fully integral.
\end{rem}

The main result in \cite{high-integral} is a complete description for the Ehrhart coefficients of a $k$-integral polytope in terms of volumes of projections and Ehrhart polynomials of \emph{slices}. 
\begin{defn}
For any $\y \in \pi^{(d-k)}(P),$ we define \emph{the slice of $P$ over $\y$}, denoted by $\pi_{d-k}(\y, P)$, to be the intersection of $P$ with the inverse image of $\y$ under $\pi^{(d-k)}.$
\end{defn}

Recall that $[t^k]f(t)$ denotes the coefficient of $t^k$ of a polynomial $f(t).$
\begin{thm}[L.]\label{thm:higher}
If $P$ is a $k$-integral polytope, then 
\[ [t^\ell] i(P,t) = 
\begin{cases}
{\vol(\pi^{d-\ell}(P))} & \text{ if $0 \le \ell \le k$,} \\
[t^{\ell-k}] \left(\sum_{\y \in \pi^{(d-k)}(P) \cap \Z^k} i(\pi_{d-k}(\y,P),t) \right) & \text{ if $k+1 \le \ell \le d$}.
\end{cases}\]
Therefore, if $P$ is fully integral, the Ehrhart polynomial of $P$ is in the form of \eqref{equ:cyclic}, and thus $P$ is Ehrhart positive.
\end{thm}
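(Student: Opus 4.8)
The plan is to establish the two-case formula by peeling off coordinates one at a time, reducing everything to a single key identity for a fully integral polytope,
\[ i(P,t) \;=\; \vol_d(P)\,t^d \;+\; i\bigl(\pi^{(1)}(P),t\bigr), \]
where $\pi^{(1)}:\R^d\to\R^{d-1}$ forgets the last coordinate. Iterating this identity (after checking that $\pi^{(1)}(P)$ is again fully integral) gives exactly the shape of \eqref{equ:cyclic}, hence the last sentence of the theorem; a ``relative'' version of the same identity over the fibres of $\pi^{(d-k)}$ should give the general $k$-integral statement.

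To prove the key identity I would slice $tP$ by the fibres of $\pi^{(1)}$. Every lattice point of $\R^d$ projects into $\Z^{d-1}$, so
\[ i(P,t) \;=\; \sum_{\bar\y\in t\pi^{(1)}(P)\cap\Z^{d-1}} \bigl|\, tP\cap (\pi^{(1)})^{-1}(\bar\y)\cap\Z^d \,\bigr|, \]
and each fibre $tP\cap (\pi^{(1)})^{-1}(\bar\y)$ is a segment $[\ell(\bar\y),u(\bar\y)]$ whose endpoints are read off from the lower and upper facets of $tP$ lying over $\bar\y$. Here is where the hypothesis enters for the first time: since $P$ is fully integral, every facet has integral affine hull, which forces $u(\bar\y)$ and $\ell(\bar\y)$ to be \emph{integers} whenever $\bar\y\in\Z^{d-1}$, so the fibre contributes exactly $u(\bar\y)-\ell(\bar\y)+1$ lattice points. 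The ``$+1$'' terms sum to $|t\pi^{(1)}(P)\cap\Z^{d-1}|=i(\pi^{(1)}(P),t)$, and it remains to prove
\[ \sum_{\bar\y\in t\pi^{(1)}(P)\cap\Z^{d-1}}\bigl(u(\bar\y)-\ell(\bar\y)\bigr)\;=\;\vol_d(P)\,t^d. \]

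The last equality is the real work. I would get it by adapting the signed half-open decomposition from the proof of Theorem \ref{thm:cyclic}: decompose $tP$, relative to its lower envelope, into signed half-open cells whose lattice-point counts are given by \emph{exact} closed forms (a single product of binomials in the simplex case), so that the signed contributions telescope to $\vol_d(P)\,t^d$ with no error term; full integrality is precisely what makes the floor/ceiling corrections along the boundaries of those cells vanish, so that the naive ``volume Riemann sum'' is in fact exact. For the general $k$-integral case I would run the same fibre analysis along $\pi:=\pi^{(d-k)}$: now the fibres $tP\cap\pi^{-1}(\y)$ are $(d-k)$-dimensional, namely $t$ times the slices $\pi_{d-k}(\y/t,P)$, so applying the fully integral machinery ``in the fibre direction'' peels the top $d-k$ Ehrhart coefficients off as $[t^{\ell-k}]\bigl(\sum_{\y}i(\pi_{d-k}(\y,P),t)\bigr)$, while the remaining low-degree part is the exact lattice sum of the piecewise-polynomial fibre-volume function over $\pi(P)$, which by the same cancellation mechanism equals $\vol(\pi^{d-\ell}(P))$ for $\ell\le k$. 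Along the way one verifies the bookkeeping lemmas: that $\pi^{(d-k)}(P)$ (and the further projections used in the induction) inherit the integrality they need, and that a slice of $P$ over a lattice point maps to a slice of a projection under one more projection.

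The main obstacle, I expect, is exactly the exact (error-free) evaluation of these lattice sums: a priori $\sum_{\bar\y}(u(\bar\y)-\ell(\bar\y))$ equals $\vol_d(tP)$ only up to Euler--Maclaurin-type boundary fluctuations, and the entire force of the integrality hypothesis must be spent annihilating those fluctuations; making the signed cells and their exact counts behave correctly in the relative (fibre-by-fibre) setting, and tracking how the removal of the lower envelope interacts with the slicing projection, is the delicate part. A secondary difficulty is that, unlike cyclic polytopes, a general fully integral polytope need not be triangulable into fully integral simplices, so the ``reduce to simplices'' step of the cyclic argument has to be replaced by a decomposition carried out directly on $P$. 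By contrast, the inheritance of integrality under projection and the identification of the top coefficients with slice Ehrhart polynomials should be comparatively routine once the core identity is in hand.
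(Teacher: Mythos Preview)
This survey does not itself prove Theorem~\ref{thm:higher}; it cites \cite{high-integral} for the argument. The one methodological hint the paper gives appears in \S\ref{subsec:mcmullen}, where it remarks that McMullen's formula ``was one of the ingredients used in proving the results on higher integrality conditions discussed in \S\ref{subsubsec:kintegral}.'' So the published proof passes through McMullen's formula and the expression \eqref{equ:coeff} for Ehrhart coefficients as sums $\sum_F \alpha(F,P)\,\nvol(F)$ over faces, rather than through a direct extension of the cyclic-polytope decomposition.

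Your route is genuinely different: you try to push the signed half-open decomposition from the cyclic case to arbitrary fully integral polytopes. You correctly isolate the target identity $i(P,t)=\vol_d(P)\,t^d+i(\pi^{(1)}(P),t)$ and the slicing reduction, but you also correctly flag the two places where the argument is incomplete. First, the cyclic proof begins by triangulating into fully integral simplices, and as you note, a general fully integral polytope need not admit such a triangulation; your proposed fix (``a decomposition carried out directly on $P$'') is not specified. Second, the crux --- that $\sum_{\bar\y}(u(\bar\y)-\ell(\bar\y))$ equals $\vol_d(P)\,t^d$ \emph{exactly}, with no Euler--Maclaurin correction --- is asserted to follow from ``the same cancellation mechanism,'' but in the cyclic case that mechanism is tied to the very specific $\fS_d$-indexed cells coming from the moment curve, and you give no substitute combinatorial structure for a general $P$. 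As written, then, this is a plan that identifies the right identity and the right difficulties but does not supply the key technical device.

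The McMullen-formula approach sidesteps both obstacles: formula \eqref{equ:coeff} is already a sum over faces of $P$, so no triangulation is needed, and the integrality hypothesis is a condition on affine hulls of faces, so it can be exploited face by face to control how the terms $\alpha(F,P)\,\nvol(F)$ transform under projection. That is presumably why it, rather than an extension of the cyclic decomposition, is the tool actually used.
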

Because both cyclic polytopes and lattice-face polytopes are fully integral polytopes, the above theorem generalizes results in \cite{cyclic, lattice-face, note-lattice-face}.

The following is an example showing how to use Theorem \ref{thm:higher} to obtain the Ehrhart polynomial of a $1$-integral polytope.

\begin{ex}[Example of Theorem \ref{thm:higher}]\label{ex:highpoly}
	Consider the $3$-dimensional polytope 
	\[P = \conv \{(0,0,0), (4,0,0), (3,6,0), (2,2,2)\} \subset \R^3,\] which is illustrated on the right side of Figure \ref{fig:higher}. One checks that $P$ is $1$-integral. Clearly $\pi^{(2)}(P) = [0,4]$ and $\pi^{(3)}(P) = 0.$ By the first part of Theorem \ref{thm:higher}, 
	\[ [t^1]i(P,t) = \vol_1([0,4]) = 4, \quad \text{ and } \quad [t^0] i(P,t) = \vol_0(0) = 1.\]

	For the higher Ehrhart coefficients of $P$, we need to compute the Ehrhart polynomials of slices of $P$ over lattice points in $\pi^{(2)}(P)=[0,4]$. In the picture, the three shaded triangles are the slices of $P$ over the lattice points $1, 2$ and $3.$ The slices of $P$ over lattice points $0$ and $4$ are the single points $(0,0,0)$ and $(4,0,0),$ respectively. We calculate the Ehrhart polynomials of all five slices, 
	by summing which up we obtain $8t^2 + 10t + 5.$ Then the second part of Theorem \ref{thm:higher} says that
	\[ [t^3] i(P,t) = 8 \quad \text{ and } \quad [t^2] i(P,t) = 10.\]
	Therefore, 
	\[ i(P,t) = 8t^3 + 10 t^2 + 4t + 1.\]
\end{ex}

Recall that the \emph{face poset} of a polytope $P$ is the set of all faces of $P$ ordered by inclusion. We say two polytopes have the \emph{same combinatorial type} if they have the same face poset. 
As a byproduct of the study of Ehrhart polynomials of full-integral polytopes, we can also show that Ehrhart positivity is independent from combinatorial types of polytopes \cite{note-lattice-face}.

\begin{thm}[L.]\label{thm:notcomb}
For any rational polytope $P,$ there exists a polytope $P'$ with the same face lattice such that $P'$ satisfies the higher integrality condition and thus is Ehrhart positive. 
\end{thm}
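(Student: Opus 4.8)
The plan is to reduce everything to Theorem \ref{thm:higher}: since the Ehrhart polynomial of a fully integral polytope has the form \eqref{equ:cyclic}, each of its coefficients is the volume of a projection and hence positive, so it is enough to realize the combinatorial type of $P$ by a fully integral polytope. As a first step I would normalize: replacing $P$ by its dilation by the least common multiple of the denominators of the coordinates of its vertices, I may assume $P$ is integral; a dilation is an invertible affine map and so does not change the face lattice. It then remains to produce an integral polytope $P'$ with the same face lattice as $P$ that is fully integral. Note that this genuinely uses the freedom in the geometric realization: full integrality is a geometric, not a combinatorial, condition (not even Ehrhart positivity is combinatorial, cf.\ the Reeve tetrahedron of \S\ref{subsec:reeve}), so an arbitrary integral realization will not do.

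To construct $P'$ I would induct on $d=\dim P$, using the following recursive description of full integrality, which one verifies from the identity $\pi^{(d-\ell)}=\pi^{(d-1-\ell)}\circ\pi^{(1)}$: an integral $d$-polytope is fully integral if and only if (i) for every facet $F$ the primitive integer normal of $\aff(F)$ has last coordinate $\pm 1$ (equivalently, $\aff(F)$ is integral), and (ii) for every facet $F$ the projection $\pi^{(1)}(F)\subset\R^{d-1}$ is a fully integral $(d-1)$-polytope; moreover, when (i) holds $\pi^{(1)}$ restricts to a lattice isomorphism $\aff(F)\cap\Z^d\xrightarrow{\sim}\Z^{d-1}$, so $\pi^{(1)}(F)$ inherits the face lattice of $F$. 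Condition (i) by itself is easy to force: if $N$ is the least common multiple of the absolute values of the last coordinates of the primitive facet normals of $P$, then dilating the last coordinate of $\R^d$ by $N$ keeps $P$ integral and makes every primitive facet normal end in $\pm 1$. So the heart of the matter is the inductive step: to find an invertible affine transformation $T$ of $\R^d$ (which automatically preserves the face lattice) such that $T(P)$ is integral, satisfies (i), and has all of its facet-projections $\pi^{(1)}(T(F))$ simultaneously in a position from which the construction can be repeated one dimension lower, the base case $d\le 1$ being trivial.

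The hard part is precisely this simultaneity: a naive coordinate dilation that achieves (i) typically destroys the analogous normalization wanted at the next codimension, and in the end one must force the relevant index-one and non-verticality conditions (cf.\ the lines $L_1$ and $L_2$ of Example \ref{ex:1integral}) for \emph{every} face of \emph{every} codimension at once, without disturbing the combinatorial type. The way around this, carried out by the author in \cite{note-lattice-face}, is to choose for $T$ a transformation built from a single sufficiently large parameter together with a suitable unipotent ``staircase'' shear, arranged so that the finitely many numerical conditions contributed by all faces hold simultaneously; the resulting $P'$ is a lattice-face polytope and hence, by the Remark following the definition of $k$-integral polytopes, fully integral. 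Applying Theorem \ref{thm:higher} to $P'$ then completes the proof.
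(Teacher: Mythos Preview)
Your proposal correctly identifies the overall strategy---produce a fully integral realization of the combinatorial type and invoke Theorem \ref{thm:higher}---and your recursive characterization of full integrality via facets is reasonable. But the proof has a genuine gap: after setting up the induction and naming the difficulty (``simultaneity''), you do not resolve it; you simply cite \cite{note-lattice-face} and describe its content vaguely as ``a single sufficiently large parameter together with a suitable unipotent staircase shear.'' That is a deferral, not an argument. Moreover, there is a concrete hole even in the part you do execute: your step ``dilate the last coordinate by $N$ to make every primitive facet normal end in $\pm 1$'' presupposes that no facet is vertical (i.e., no primitive facet normal has last coordinate $0$); you never arrange this, and no dilation of the last coordinate will fix it.

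The paper's own sketch takes a rather different, and in fact more direct, route. It does \emph{not} induct through facets. Instead it first performs a single change of lattice basis so that the affine hull of \emph{every} face of $P$ is in general position (this is precisely what rules out the ``vertical'' obstruction you missed, at every codimension at once). It then uses the coordinate-wise dilation operator $\s\star\x=(s_1x_1,\dots,s_dx_d)$ and the key observation that, for an $\ell$-dimensional affine space $W$ in general position, there are integers $c_1,\dots,c_\ell$ such that $\s\star W$ is integral whenever $c_m s_m\mid s_{m+1}$ for all $m$. Since $P$ has finitely many faces, one can choose a single $\s$ satisfying the divisibility chain for all of them simultaneously, and $P'=\s\star P$ is fully integral. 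Note this is a diagonal dilation, not a shear; the only ``shear-like'' step is the preliminary basis change for general position. This one-shot argument avoids the simultaneity problem that stalls your inductive approach.
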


\begin{proof}[Sketch of proof] 
	First, by choosing appropriate bases for our underlying lattice $\Z^d,$ we may assume that the affine hull of any face of $P$ is in general position. 
	
	Next, for any $\s = (s_1, \dots, s_d) \in \Z^d$ and $\x \in \R^d,$ we define 
	\[ \s \star \x = (s_1 x_1, s_2 x_2, \dots, s_d x_d).\]
	So $\s$ is an operator on $\R^d$ that dilates points with different scalars at different coordinates. 
	We observe that for any $\ell$-dimensional affine space $W \subset \R^d$ that is in general position, there exist (positive) integer scalars $c_1, \dots, c_\ell$ such that for any $\s \in \Z_{\neq 0}^d,$ if $c_ms_m$ divides $s_{m+1}$ for each $m \in \{1,2,\dots, \ell\}$, then 
	\[ \s \star W := \{ \s \star \w \ : \ \w \in W \} \] is integral. For example, for the slanted red line $L_1$ appeared in Example \ref{ex:1integral}, one checks that whenever $\s=(s_1, s_2)$ satisfies $4s_1$ divides $s_2$, the affine space $\s \star L_1$ is integral. Hence, we can choose $c_1=4$.

	Since $P$ has finitely many faces, we can apply the above operations inductively on dimensions of faces to obtain a full integral polytope $P'$ that actually defined as $\s \star P$ for some $\s \in \Z_{\neq 0}^d.$
\end{proof}

\begin{rem}
There are a lot of properties of polytopes people study other than Ehrhart positivity, such as ``normality'', ``integer decomposition property'' (or IDP), ``existence of a (regular) unimodular triangulation''. For the majority of them, even if you start with a polytope $P$ that does not have a certain property, dilating $P$ with a large enough scalar often yields a polytope with the desired property (see, for example, \cite{BruGubTru, CoxHaaHibHig, HaaPafPieSan}). 
	Clearly, simple dilations won't change the answer to the Ehrhart positivity question for any polytope. After all, $i(kP, t) = i(P, kt).$ Hence, the Ehrhart coefficients of a dilation of $P$ have exactly the same sign pattern as Ehrhart coefficients of $P.$

However, our proof of Theorem \ref{thm:notcomb} says that dilating in different directions with different scalars can change a non-Ehrhart-positive polytope to a Ehrhart-positive one. 
\end{rem}

\section{McMullen's formula and positivity of generalized permutohedra} \label{sec:mcmullen}

The main purpose of this section is to study the Ehrhart positivity conjecture for generalized permutohedra. After reviewing previously known results supporting this conjecture, we introduce \emph{McMullen's formula}, which is a formula for computing the number of lattice points inside polytopes. This provides us a way of attacking the question of Ehrhart positivity by reducing the problem to ``$\alpha$-positivity''. We then discuss the author's joint work \cite{ehrhartpos-gp-fpsac, BValpha} with Castillo on the Ehrhart positivity conjecture of generalized permutohedra using this approach.

\subsection{Motivation and evidence}

In this part, we discuss the motivation for considering the Ehrhart positivity conjecture of generalized permutohedra and prior work by Postnikov which provides evidence for this conjecture.
We start by formally defining \emph{generalized permutohedra}, the main family of polytopes we study in this section.  
Whenever we talk about generalized permutohedra, we have $D=d+1.$

\subsubsection{Definition and first positivity conjecture}
Given a strictly increasing sequence $\balpha= (\alpha_1,\alpha_2,\cdots,\alpha_{d+1}) \in \R^{d+1}$, 
we define the \emph{usual permutohedron} associated with $\balpha$ as
\[\Perm(\balpha) := \conv\left((\alpha_{\pi(1)},\alpha_{\pi(2)},\cdots, \alpha_{\pi({d+1})}) \ :  \ \pi\in \fS_{d+1}\right)\]
In particular, if $\balpha = (1, 2, \dots, {d+1}),$ we obtain the 
regular permutohedron $\Pi_d$ considered in Example \ref{ex:regular}.
In \cite{post}, Postnikov defined \emph{generalized permutohedra} to be polytopes that can be obtained from usual permutohedra by moving vertices while preserving all edge directions. (Note that in this definition, edges are allowed to degenerate, and hence vertices can collapse.)

In \cite{deloeraHK}, De Loera, Haws, and Koeppe study the Ehrhart polynomials of matroid base polytopes, and conjecture those all have positive coefficients.  
However, 
it turns out that every matroid base polytope is a generalized permutohedron \cite[Section 2]{ardilaBD}.
In \cite{ehrhartpos-gp-fpsac, BValpha}, Castillo and the author generalize the conjecture of De Loera et al. to all integral generalized permutohedra:
\begin{conj}[Castillo-L.]\label{conj:posgp}
All integral generalized permutohedra are Ehrhart positive. 	
\end{conj}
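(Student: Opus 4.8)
The plan is to attack Conjecture~\ref{conj:posgp} through \emph{McMullen's formula}, a formula that writes the number of lattice points of an integral polytope $P \subset \R^D$ as a weighted sum over its faces,
\[ |P \cap \Z^D| \ = \ \sum_{F} \alpha(F,P)\,\nvol(F), \]
where $F$ ranges over all nonempty faces of $P$, $\nvol(F)$ denotes the lattice-normalized volume of $F$, and the weight $\alpha(F,P)$ is a real number depending only on the normal cone of $F$ in the normal fan of $P$ --- in particular \emph{not} on the size of $F$. Since the normal fan is invariant under dilation while $\nvol(tF) = t^{\dim F}\nvol(F)$, applying the formula to $tP$ yields
\[ [t^k]\,i(P,t) \ = \ \sum_{F \,:\, \dim F = k} \alpha(F,P)\,\nvol(F). \]
As $\nvol(F) > 0$, it suffices to prove what one may call \emph{$\alpha$-positivity} of generalized permutohedra: there is an admissible choice of the $\alpha$-function for which $\alpha(F,P) \ge 0$ for every face $F$ of every integral generalized permutohedron $P$.

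The essential structural feature to exploit is that the normal fan of any generalized permutohedron is a coarsening of the braid fan, i.e.\ the fan of the type $A_d$ Coxeter (braid) arrangement. Thus every normal cone $N_F(P)$ is a union of closed braid cones, and the $\fS_{d+1}$-symmetry acts on the set of these cones. Concretely, I would proceed as follows: (1) fix a workable construction of $\alpha$ --- either McMullen's original one or the one coming from a choice of a suitable polarization (a generic linear functional / reference cone); (2) using the valuation and additivity properties of $\alpha$, express $\alpha(F,P)$ in terms of its values on the braid cones contained in $N_F(P)$; (3) reduce the infinitude of cases --- all generalized permutohedra, all dimensions --- to a finite or inductive combinatorial statement about unions of braid cones, using the Coxeter symmetry and the recursive product structure of these fans; and (4) verify nonnegativity of the resulting quantities.

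The hard part, and the reason the statement is still only a conjecture, lies in steps (3)--(4). There is genuine freedom in the choice of the $\alpha$-function --- different reference cones or polarizations give different weights, all reproducing the same lattice-point count --- and it is far from clear that any single admissible choice is simultaneously nonnegative on \emph{all} the cones that arise, let alone that the required nonnegativity is preserved under the coarsening from the braid fan to the (possibly much coarser) normal fan of a general generalized permutohedron. One therefore expects to need either a canonical choice of $\alpha$ finely adapted to the braid arrangement, or an indirect route --- deforming within the family of generalized permutohedra, or combining the known positivity of the top few and bottom few Ehrhart coefficients with an interlacing- or log-concavity-type input to control the remaining coefficients. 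The partial results of~\cite{ehrhartpos-gp-fpsac, BValpha} with Castillo --- $\alpha$-positivity, hence Ehrhart positivity, for the first few and last few coefficients of every integral generalized permutohedron --- are obtained exactly along these lines, and pushing the argument through for all coefficients is the central open problem.
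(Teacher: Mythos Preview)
This statement is a \emph{conjecture}, not a theorem, and the paper does not claim to prove it. Your write-up correctly treats it as such and outlines a strategy rather than a proof; that strategy is essentially the one the paper pursues in Section~\ref{sec:mcmullen}. A few points of comparison are worth making.

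First, where you leave the choice of $\alpha$ open (``McMullen's original one or the one coming from a choice of a suitable polarization''), the paper commits specifically to the Berline--Vergne construction. This matters: the BV-$\alpha$ satisfies valuation property (P1) and the vanishing-on-lines property (P2), and these two properties are exactly what drive the paper's Reduction Theorem (Theorem~\ref{thm:reduction-gen}). That theorem is the precise formulation of your step~(2): it says that if the normal fan of $P$ refines that of $Q$, then BV-$\alpha$-positivity of $P$ implies BV-$\alpha$-positivity of $Q$. Combined with the fact that every generalized permutohedron has normal fan refined by the Braid fan $\Br_d = \Sigma(\Pi_d)$, this reduces Conjecture~\ref{conj:posgp} to a statement about a \emph{single} polytope in each dimension --- BV-$\alpha$-positivity of the regular permutohedron $\Pi_d$ (Conjecture~\ref{conj:alphas}). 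Your plan gestures at this reduction but does not isolate it; in the paper it is the main structural step.

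Second, your summary of the partial results is slightly off. What is actually established is: full BV-$\alpha$-positivity (hence Ehrhart positivity) of $\Pi_d$ for $d \le 6$; positivity of $\alpha(F,\Pi_d)$ for all faces $F$ of codimension $2$ or $3$ in every dimension (giving the third and fourth Ehrhart coefficients of every integral generalized permutohedron); and positivity of $\alpha(E,\Pi_d)$ for edges $E$ when $d \le 500$ (giving the linear coefficient in that range). These are obtained by direct computation of $\Psi$ on low-dimensional cones for the codimension-$2,3$ cases, and by setting up and solving a linear system using known Ehrhart polynomials of type-$\cY$ generalized permutohedra for the edge case --- not by an interlacing or log-concavity argument.
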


Indeed, due to Postnikov's work, a big family of generalized permutohedra is already known to be Ehrhart positive, which provides a strong evidence to the above conjecture. We describe his work below.

\subsubsection{Ehrhart positivity of type-$\cY$ generalized permutohedra} \label{subsubsec:typey}

In \cite{post}, Postnikov considers Minkowski sums of dilated simplices: For any nonempty subset $I \subseteq [d+1],$ define the simplex
\[ \Delta_I := \conv \{ \be_i \ : \ i \in I\}.\]
Let $\y = ( y_I \ : \ \emptyset \neq I \subseteq [d+1]) \in \left(\R_{\ge 0}\right)^{2^{d+1}-1}$ be a vector indexed by nonempty subsets of $[d+1]$ with nonnegative entries. We define the polytope 
\[ P_d^{\cY} (\y) := \sum_{\emptyset \neq I \subseteq [d+1]} y_I \Delta_I\]
as the Minkowski sum of the simplices $\Delta_I$ dilated by the factor $y_I.$
Postnikov shows that $P_d^{\cY}(\y)$ is always a generalized permutohedron \cite[Proposition 6.3]{post}; however not every generalized permutohedron can be expressed as $P_d^{\cY}(\y)$ for some $\y$ \cite[Remark 6.4]{post}. Therefore, we call $P_d^{\cY}(\y)$ a \emph{type-$\cY$ generalized permutohedron}.

Postnikov then reformulates the construction of $P_d^\cY(\y)$ using bipartite graphs: Let $G$ be a subgraph of the bipartite graph $K_{c,d+1}$ without isolated vertices. Label the vertices of $G$ on the left by $l_1, l_2, \dots, l_c$ and vertices on the right by $r_1, r_2, \dots, r_{d+1}.$ For each $1 \le j \le c,$ we let 
\[ I_j^G = \{ i \in [d+1] \ : \ \{l_j, r_i\} \text{ is an edge of $G$}\}.\]
For any $(y_1, y_2, \dots, y_c) \in \left(\R_{\ge 0}\right)^c,$ we define the polytope
\[ P_G(y_1, \dots, y_c) := \sum_{j=1}^c y_j \Delta_{I_j}^G.\]

\begin{rem}\label{rem:typey}
	It is clear that $P_G(y_1, y_2, \dots, y_c)$ is the type-$\cY$ generalized permutohedron $P_d^\cY(\y)$ where $y_I = \sum_{j: I_j=I} y_j.$ Conversely, the type-$\cY$ generalized permutohedron $P_d^\cY(\y)$ is the polytope $P_G(\y)$ where $G$ is the subgraph of $K_{2^{d+1}-1,d+1}$ such that left vertices of $G$ are indexed by nonempty subsets $I$ of $[d+1],$ and the left vertex $l_I$ is adjacent to the right vertex $r_i$ if and only if $i \in I.$
\end{rem}

In \cite[Section 11]{post}, Postnikov defines the ``trimmed generalized permutohedron'' as the ``Minkowski difference'' of $P_G(y_1, \dots, y_c)$ and the simplex $\Delta_{[d+1]}.$
By providing a formula for the number of lattice points in a trimmed generalized permutohedron, he obtains a formula for the number of lattice points in $P_G(y_1, \dots, y_c)$ \cite[Theorem 11.3]{post}, which leads to an expression for the Ehrhart polynomial of $P_G(y_1, \dots, y_c)$ as a summation over \emph{$G$-draconian sequences}.
\begin{defn}[Definition 9.2 in \cite{post}]
	A sequence of nonnegative integers $\bg =(g_1, g_2, \dots, g_c)$ is a \emph{$G$-draconian sequence} if $\sum_{j=1}^c g_j = d$ and for any subset $\{j_1, \dots, j_k\} \subseteq [c],$ we have $|I_{j_1}^G \cup \cdots \cup I_{j_k}^G| \ge g_{j_1} + \cdots + g_{j_k} +1.$ 
\end{defn}

\begin{thm}[Postnikov]\label{thm:typey}  Suppose $G$ is a subgraph of $K_{c,d+1}$ without isolated vertices such that $I_1^G= [d+1].$ Let $y_1, \dots,y_c \in \N.$
	Then the Ehrhart polynomial of $P_G(y_1, \dots, y_c)$ is given by
	\[ i(P_G(y_1,y_2,\dots, y_c), t) = \sum_{\bg} \multiset{y_1t+1}{g_1} \prod_{k=2}^c \multiset{y_k t}{g_k},\]
	where the summation is over all $G$-draconian sequences $\bg = (g_1, \dots, g_c).$
\end{thm}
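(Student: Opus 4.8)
The plan is to deduce this from Postnikov's lattice-point count for $P_G(z_1,\dots,z_c)$ \cite[Theorem 11.3]{post}, organized as follows. First, dilation commutes with the Minkowski-sum construction: since $t\,(y_j\Delta_{I_j}) = (ty_j)\Delta_{I_j}$ and dilation distributes over Minkowski sums, we have $t\,P_G(y_1,\dots,y_c) = P_G(ty_1,\dots,ty_c)$. Hence it suffices to prove, for every $\z = (z_1,\dots,z_c) \in \N^c$, the lattice-point identity
\[
\bigl|P_G(z_1,\dots,z_c)\cap\Z^{d+1}\bigr| \;=\; \sum_{\bg}\multiset{z_1+1}{g_1}\prod_{k=2}^{c}\multiset{z_k}{g_k},
\]
summed over $G$-draconian sequences $\bg$, and then substitute $z_j = ty_j$. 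The right-hand side is visibly a polynomial in each $z_j$, so this also re-proves that $i(P_G(\y),t)$ is a polynomial in $t$. (As a consistency check, taking $c=d$ and $I_j^G = \{j,j+1,\dots,d+1\}$ recovers the Pitman-Stanley formula \eqref{equ:pitsta}, the $G$-draconian sequences being precisely the set $I_d$ defined there.)

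For the lattice-point identity I would follow Postnikov's device of \emph{trimming}. Because $I_1^G = [d+1]$, the summand $z_1\Delta_{[d+1]}$ contains a copy of $\Delta_{[d+1]}$ (assume $z_1 \ge 1$; the case $z_1 = 0$ then follows since both sides are polynomials in $z_1$ agreeing for $z_1 \ge 1$), so $\Delta_{[d+1]}$ is a genuine Minkowski summand of $P_G(\z)$, and the trimmed polytope $\widetilde P_G(\z) := P_G(\z) \ominus \Delta_{[d+1]}$ satisfies $P_G(\z) = \widetilde P_G(\z) + \Delta_{[d+1]}$. The strategy is then: (i) show that $\widetilde P_G(\z)$ admits a subdivision into half-open cells indexed by $G$-draconian sequences, the cell of $\bg$ being a product of dilated unimodular simplices of dimensions $g_1,\dots,g_c$ — so it contributes $\prod_{k=1}^{c}\multiset{z_k}{g_k}$ lattice points — and that these cells partition $\widetilde P_G(\z)\cap\Z^{d+1}$; and (ii) transfer the count across the ``$+\,\Delta_{[d+1]}$'' step, whose effect is to promote the $k=1$ factor from $\multiset{z_1}{g_1}$ to $\multiset{z_1+1}{g_1}$. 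The off-by-one is precisely the ``$+1$'' slack in the draconian inequalities $|I_{j_1}^G\cup\cdots\cup I_{j_k}^G| \ge g_{j_1}+\cdots+g_{j_k}+1$, which tracks the extra degree of freedom supplied by the full simplex $\Delta_{[d+1]}$.

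An equivalent route, avoiding the Minkowski difference, is to subdivide $\sum_j z_j\Delta_{I_j}$ directly into a \emph{fine mixed subdivision} whose maximal cells correspond to $G$-draconian sequences — via the Cayley trick these are triangulations of the Cayley polytope of $\Delta_{I_1},\dots,\Delta_{I_c}$ — and to count lattice points cell by cell using a half-open model so that nothing is counted twice. Either way, the crux, and the step I expect to be the main obstacle, is the combinatorial heart: proving that the cells of the relevant (mixed) subdivision are exactly the products of unimodular simplices indexed by $G$-draconian sequences — that is, determining which faces of the individual $\Delta_{I_j}$ may be combined into a full-dimensional cell, and showing that this is governed precisely by the ``independence'' encoded in the bipartite graph $G$ and the draconian inequalities. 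This requires a careful compatibility argument between the half-open decomposition of a Minkowski sum of simplices and the graph $G$, and is where the real content of \cite[\S 11]{post} lies; the passage from the lattice-point count to the Ehrhart statement above is then purely formal.
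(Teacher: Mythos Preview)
Your proposal is correct and aligns with what the paper does: the paper does not give its own proof of this theorem but cites Postnikov \cite[Theorem 11.3]{post}, sketching only that Postnikov passes through the \emph{trimmed generalized permutohedron} $P_G(\z)\ominus\Delta_{[d+1]}$, obtains a lattice-point formula for it, and then deduces the Ehrhart polynomial formula. Your outline (dilation reduces to lattice-point counting at parameters $z_j=ty_j$; trim off $\Delta_{[d+1]}$ using $I_1^G=[d+1]$; subdivide the trimmed polytope into half-open products of unimodular simplices indexed by $G$-draconian sequences; then recover the ``$+1$'' in the first multiset coefficient when adding $\Delta_{[d+1]}$ back) is exactly this route, with the correct identification of where the combinatorial content lies.
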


Similar to the results discussed in \S \ref{subsubsec:PitSta} and \S \ref{subsubsec:flow}, it follows from Lemma \ref{lem:product}/\eqref{item:sum} that the Ehrhart polynomial described in the above theorem has positive coefficients. Thus, by Remark \ref{rem:typey}, we immediately have the following:
\begin{cor}\label{cor:typey}
Any integral type-$\cY$ generalized permutohedron is Ehrhart positive.
\end{cor}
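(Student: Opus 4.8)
The plan is to deduce Corollary~\ref{cor:typey} directly from Theorem~\ref{thm:typey} together with Remark~\ref{rem:typey} and Lemma~\ref{lem:product}. The only subtlety is that Theorem~\ref{thm:typey} is stated under the hypothesis $I_1^G = [d+1]$, so the first task is to check that every integral type-$\cY$ generalized permutohedron can be put into a form to which the theorem applies. By Remark~\ref{rem:typey}, an integral type-$\cY$ generalized permutohedron is $P_G(y_1,\dots,y_c)$ for a suitable bipartite graph $G$ without isolated vertices and $y_1,\dots,y_c\in\N$; so I would first explain how to arrange that one of the left vertices corresponds to the full set $[d+1]$.

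First I would note that $P_G(y_1,\dots,y_c)$ is unchanged if we enlarge $G$ by adjoining one extra left vertex $l_0$ adjacent to every right vertex (so that $I_0^G = [d+1]$) and set its coefficient $y_0 = 0$: adding the Minkowski summand $0\cdot\Delta_{[d+1]} = \{\0\}$ does not change the polytope, hence does not change $i(P_G,t)$. After reindexing so that this new vertex is the first one, we are exactly in the situation of Theorem~\ref{thm:typey} with $I_1^G = [d+1]$ and $y_1 = 0$. (One should observe that the resulting $P_G$ is still integral, which is immediate since the vertex set is unchanged, and that the $G$-draconian sequences are still a finite, nonempty index set, e.g.\ because $P_G$ is a lattice polytope of some dimension $\le d$ and the Ehrhart polynomial is not identically zero.)

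Next I would invoke Theorem~\ref{thm:typey} to write
\[
  i(P_G(y_1,\dots,y_c),t) = \sum_{\bg} \multiset{y_1 t + 1}{g_1}\prod_{k=2}^c \multiset{y_k t}{g_k},
\]
the sum being over $G$-draconian sequences $\bg=(g_1,\dots,g_c)$, and then analyze each summand. Recall $\multiset{x}{y} = \binom{x+y-1}{y} = \frac{1}{y!}\prod_{m=0}^{y-1}(x+m)$. Hence $\multiset{y_1 t+1}{g_1} = \frac{1}{g_1!}\prod_{m=0}^{g_1-1}(y_1 t + 1 + m)$ is a product of linear polynomials in $t$ with positive coefficients (each factor $y_1 t + 1 + m$ has nonnegative leading coefficient $y_1$ and strictly positive constant term $1+m$; if $g_1 = 0$ it is the empty product $1$). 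Likewise, for each $k\ge 2$ with $g_k \ge 1$ one has $\multiset{y_k t}{g_k} = \frac{1}{g_k!}\prod_{m=0}^{g_k-1}(y_k t + m)$; the factor with $m=0$ is $y_k t$, which has zero constant term. This is where the argument needs a small amount of care rather than a literal appeal to Lemma~\ref{lem:product}\eqref{item:prod}: a product of linear polynomials with \emph{positive} coefficients is required there, and $y_k t$ has no constant term. I would resolve this exactly as in the treatment of Pitman--Stanley polytopes in \S\ref{subsubsec:PitSta}: since $t^{g_k}\cdot\prod_{m=1}^{g_k-1}(y_k t + m)$ is $t^{g_k}$ times a product of linear polynomials with positive coefficients, each summand is a monomial times a product of positive-coefficient linear polynomials, hence has nonnegative coefficients; and because $P_G$ is a genuine polytope, $i(P_G,t)$ is not identically zero, so the total sum has all coefficients strictly positive. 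Thus $i(P_G(y_1,\dots,y_c),t)$ has positive coefficients, i.e.\ $P_G$ is Ehrhart positive, which by Remark~\ref{rem:typey} is the assertion of Corollary~\ref{cor:typey}.

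The main obstacle here is purely bookkeeping rather than conceptual: one must make sure the reduction to the hypothesis $I_1^G=[d+1]$ is legitimate (the $y_1=0$ trick) and must handle the slightly degenerate shape of the factors $\multiset{y_k t}{g_k}$ when $y_k = 0$ or when the $m=0$ factor appears, so that the invocation of Lemma~\ref{lem:product}\eqref{item:sum} is actually valid. None of this is hard, and in fact the cleanest writeup is to say that the displayed Ehrhart polynomial is, up to the positive rational constants $1/(g_1!\cdots g_c!)$, a sum over $G$-draconian sequences of a monomial $\prod_k t^{[y_k=0]g_k}$ times a product of linear polynomials in $t$ with positive coefficients, so Lemma~\ref{lem:product}\eqref{item:sum} (applied after factoring out the monomials, or in the strengthened form allowing monomial factors) gives nonnegativity of all coefficients, and non-vanishing of $i(P_G,t)$ upgrades this to positivity.
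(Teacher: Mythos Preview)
Your approach is exactly the paper's: it deduces the corollary in a single sentence from Theorem~\ref{thm:typey}, Remark~\ref{rem:typey}, and Lemma~\ref{lem:product}\eqref{item:sum}, and you are spelling that same route out in more detail. (The dummy-vertex manoeuvre is harmless but unnecessary: the bipartite graph of Remark~\ref{rem:typey} already contains a left vertex $l_{[d+1]}$ with neighbourhood $[d+1]$, so after reordering the hypothesis $I_1^G=[d+1]$ is automatic.)

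There is, however, a genuine gap in your last step. The inference ``nonnegative coefficients and $i(P_G,t)\not\equiv 0$ imply every coefficient is strictly positive'' is simply false (take $t^2$). What your analysis of the factors $\multiset{y_kt}{g_k}$ actually yields is \emph{nonnegativity} of each summand, since for $k\ge 2$ with $g_k\ge 1$ the factor $y_kt$ has zero constant term; non-vanishing of the total does not by itself rule out an internal zero coefficient. (There is also a slip in the bookkeeping: the monomial factor coming from $\multiset{y_kt}{g_k}$ when $y_k>0$ is $t$, not $t^{g_k}$, and when $y_k=0$ and $g_k\ge 1$ the whole term is $0$, not $t^{g_k}$.) The paper's one-line justification is equally informal on this very point---it asserts, here and already for Pitman--Stanley and flow polytopes, that Lemma~\ref{lem:product}\eqref{item:sum} applies, treating $a_kt$ as a ``linear polynomial with positive coefficients''. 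So your write-up is no less rigorous than the paper's; but since you explicitly raise the subtlety, you should not close it with a step that does not hold. A correct patch in the case $y_1=y_{[d+1]}>0$: the sequence $\bg=(d,0,\dots,0)$ is always $G$-draconian, its summand $\multiset{y_1t+1}{d}=\tfrac{1}{d!}\prod_{m=1}^d(y_1t+m)$ has strictly positive coefficients in every degree $0,\dots,d$, and all other summands contribute nonnegatively. When $y_{[d+1]}=0$ an additional reduction (e.g.\ factoring $P$ over connected components of the support hypergraph $\{I:y_I>0\}$ and lowering $d$) is genuinely needed.
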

Note that as we pointed out above, type-$\cY$ generalized permutohedra do not contain all generalized permutohedra. Thus, Conjecture \ref{conj:posgp} does not follow from the above result.
\begin{ex}[Pitman-Stanley polytopes again]
	Let $G$ be a subgraph of $K_{d+1, d+1}$ where for each $j \in [d+1],$ the left vertex $l_j$ is adjacent to right vertices $r_j, r_{j+1}, \dots, r_{d+1}.$ Then for any $\y = (y_1, \dots, y_{d+1}) \in \N^{d+1},$ 
	\[ P_G(\y) = P_G(y_1, \dots, y_{d+1}) = \sum_{j=1}^{d+1} y_j \Delta_{[j,d+1]},\]
	where $[j,d+1]=\{j, j+1, \dots, d+1\}.$
	It follows from Proposition 6.3 of \cite{post} that the inequality description of this polytope is 
	\small
	\[ P_G(\y) = \left\{ \x \in \R^{d+1} \ : \  x_i \ge 0 \text{ and } \sum_{j=1}^{i} x_j \le \sum_{j=1}^{i} y_j \text{ for $i = 1, 2,\dots, d-1$, and } \sum_{j=1}^{d+1} x_j = \sum_{j=1}^{d+1} y_j \right\}.\]
\normalsize
It is easy to see the map $\pi: \R^{d+1} \to \R^d$ that ignores the last coordinate of a point induces a unimodular transformation from $P_G(\y)$ to the Pitman-Stanley polytope $\PS_d(\widehat{\y})$ considered in \S \ref{subsubsec:PitSta}, where $\widehat{\y} = (y_1,y_2, \dots, y_d).$

One can also check that the $G$-draconian sequences for the graph $G$ given in this example are those $\bg = (g_1, \dots, g_{d+1}) \in \N^{d+1}$ satisfying
\[ \sum_{j=1}^d g_j = d, \ g_{d+1}=0, \text{ and } \sum_{j=1}^k g_j \ge k \text{ for $k=1,2, \dots, d-1$}.\]
Hence, it can be verified that Theorem \ref{thm:pitsta} is a special case of Theorem \ref{thm:typey}. 
\end{ex}

The family of type-$\cY$ generalized permutohedra not only includes the Pitman-Stanley polytope as we have seen in the example above, but also includes associahedra, cyclohedra, and more (see \cite[Section 8]{post}). However, it follows from work by Ardila, Benedetti and Doker that type-$\cY$ generalized permutohedra do not contain all matroid base polytopes \cite[Proposition 2.3 and Example 2.6]{ardilaBD}.
Therefore, Corollary \ref{cor:typey} does not settle either Conjecture \ref{conj:posgp} or the Ehrhart positivity conjecture on matroid base polytopes by De Loera et al \cite{deloeraHK}.

\subsection{McMullen's formula, $\alpha$-positivity, and a reduction theorem} \label{subsec:mcmullen}

The goal of this part is to introduce McMullen's formula and discuss why it is a good tool to show Ehrhart positivity of a family of polytopes constructed from a fixed projective fan when an $\alpha$-construction satisfies certain valuation properties. (We will discuss in \S \ref{subsec:pos-gp} that generalized permutohedra form a family of polytopes constructed from the Braid fan. Hence, the techniques introduced here are relevant to our question.)

Throughout the rest of this section, we let $V$ be a subspace of $\R^D$ and $V^*$ be the dual space of $V.$ For any polytope $P$, we use the notation $\lin(P)$ to denote the linear space obtained by shifting the affine span $\aff(P)$ to the origin.

\subsubsection{Cones} 
We need the concepts of \emph{cones}, particularly \emph{feasible cones} and \emph{normal cones}, before we start our discussion. 

A \emph{(polyhedral) cone} is the set of all nonnegative linear combinations of a finite set of vectors. 
A cone is \emph{pointed} if it does not contain a line.
\begin{defn}
  Suppose 
  $P$ is a polytope satisfying $\lin(P) \subseteq V.$ 
  \begin{enumerate}[(i)]
\item The \emph{feasible cone} of $P$ at $F$ is:
\[
{\fcone(F,P)} := \left\{ \u \in V : x + \delta \u \in P \hspace{5pt}\text{for sufficiently small $\delta$}\right\},
\]
where $x$ is any relative interior point of $F.$  (It can be checked that the definition is independent of the choice of $x.$) 

The \emph{pointed feasible cone} of $P$ at $F$ is 
\[ \fcone^p(F,P) = \fcone(F,P)/\lin(F).\]
 
 \item 
Given any face $F$ of $P$, the \emph{normal cone} of $P$ at $F$ with respect to $V$ is 
\[
\ncone_V(F, P):=\left\{ \u \in V^* \ : \ \langle \u, \bp_1 \rangle \geq \langle \u, \bp_2 \rangle, \quad \forall \bp_1\in F,\quad \forall \bp_2\in P \right\}.
\]
Therefore, $\ncone_V(F,P)$ is the collection of linear functionals $\u$ in $V^*$ such that $\u$ attains maximum value at $F$ over all points in $P.$

The \emph{normal fan} $\Sigma_V(P)$ of $P$ with respect to $V$ is the collection of all normal cones of $P$. 

 \end{enumerate}
\end{defn}
Normal cones and pointed feasible cones are related by polarity.
\begin{defn} 
	Let $K \subseteq V^*$ be a cone, and let $W$ be the subspace of $V^*$ spanned by $K.$ (So $W^*$ is a quotient space of $V$.) The \emph{polar cone} of $K$ is the cone 
	\[ K^\circ = \{ \y \in W^* \ : \ \langle \x, \y \rangle \le 0,  \ \forall \x \in K\}.\]
\end{defn}

\begin{lem}[Lemma 2.4 of \cite{BValpha}] Suppose $P$ is a polytope satisfying $\lin(P) \subseteq V$ and $F$ is a face of $P.$ 
	Then $(\ncone_V(F,P))^\circ$ is a pointed cone, and is invariant under the choice of $V$. So we may omit the subscript $V$ and just write $(\ncone(F,P))^\circ$. Furthermore,
\[ \ncone(F,P)^\circ = \fcone^p(F,P).\]
\end{lem}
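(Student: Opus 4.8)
The plan is to establish the identity $\ncone(F,P)^\circ = \fcone^p(F,P)$ by unwinding both sides as explicit cones in a common quotient space and checking containment in both directions, with the standard biduality of polyhedral cones doing the bookkeeping. First I would fix a relative interior point $x$ of $F$ and recall that $\fcone(F,P) = \{\u \in V : x + \delta\u \in P \text{ for small } \delta > 0\}$; since $P$ is cut out near $x$ by the facet inequalities $\langle a_i, \cdot\rangle \le b_i$ that are active at $x$ (together with the equalities defining $\aff(P)$), this feasible cone is exactly $\{\u \in \lin(P) : \langle a_i, \u\rangle \le 0 \text{ for all } i \text{ active on } F\}$, which makes the independence of the choice of $x$ transparent. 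On the normal side, $\ncone_V(F,P)$ is generated (modulo $\lin(P)^\perp$) precisely by those same active functionals $a_i$, restricted to $V$. So both objects are described by the same finite set of vectors $\{a_i\}$, one as the cone they generate and the other as the cone they annihilate-from-above; this is the geometric heart of the statement.

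The key steps, in order: (1) Identify $W := \mathrm{span}(\ncone_V(F,P)) \subseteq V^*$; since $\ncone_V(F,P)$ contains $\lin(P)^\perp \cap V^*$, the quotient $W^*$ is naturally identified with $\lin(P)/\lin(F)$, which is where both the polar cone and $\fcone^p(F,P) = \fcone(F,P)/\lin(F)$ live — so the ambient spaces genuinely match. (2) Compute $\ncone_V(F,P)^\circ = \{\y \in W^* : \langle \x, \y\rangle \le 0 \ \forall \x \in \ncone_V(F,P)\}$; because it suffices to test on generators, this equals $\{\y : \langle a_i, \y\rangle \le 0 \ \forall i \text{ active}\}$ together with the annihilation of $\lin(P)^\perp$, i.e. exactly the image of $\fcone(F,P)$ in the quotient, which is $\fcone^p(F,P)$. (3) Conclude pointedness: $\ncone_V(F,P)^\circ$ is pointed because $\ncone_V(F,P)$ spans $W$ (a cone whose polar is not pointed must be contained in a halfspace, contradicting full-dimensionality in $W$), and pointedness is manifestly a property of the abstract cone $\fcone^p(F,P)$, so it does not depend on the embedding $V$ — giving the asserted $V$-independence and licensing us to drop the subscript. (4) For the biduality/invariance claim one may alternatively cite that for any polyhedral cone $C$, $(C^\circ)^\circ = C$ once one passes to the span, and that polarity exchanges "cone generated by $S$" with "cone annihilated above by $S$"; this packages steps (1)–(3) cleanly.

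I expect the main obstacle to be purely bookkeeping rather than conceptual: keeping straight the three nested linear spaces $\lin(F) \subseteq \lin(P) \subseteq V$ and their duals/quotients, and verifying that the polar operation — which the paper defines relative to $W = \mathrm{span}(K)$ — lands in precisely the quotient space $\lin(P)/\lin(F)$ in which $\fcone^p(F,P)$ is defined. The one point requiring genuine care is showing $V$-independence: enlarging $V$ enlarges $V^*$, hence enlarges $W$, but it enlarges $\ncone_V(F,P)$ only by adding directions in $\lin(P)^\perp$, which are killed when forming $W^*$ and do not affect the polar; I would spell this out by noting that $\ncone_{V}(F,P) = \ncone_{\lin(P)}(F,P) \oplus (\lin(P)^\perp \cap V^*)$ and that polars of such orthogonal-direct-sum cones factor through the quotient by the second summand. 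Everything else reduces to the elementary fact that testing membership in a polar cone can be done on a generating set, which is immediate since $\langle \cdot, \y\rangle$ is linear.
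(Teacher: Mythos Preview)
Your approach is correct and standard; this is indeed how one proves the duality between normal cones and (pointed) feasible cones. Note, however, that the present paper does not actually prove this lemma --- it is quoted as Lemma~2.4 of \cite{BValpha} and stated without proof --- so there is no in-paper argument to compare against.

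One small correction to your step (1): you write that $W^*$ is naturally identified with $\lin(P)/\lin(F)$, but this is not quite right. Since $W = \mathrm{span}(\ncone_V(F,P)) = \lin(F)^\perp$ (the annihilator of $\lin(F)$ in $V^*$), the dual $W^*$ is $V/\lin(F)$, not $\lin(P)/\lin(F)$. What \emph{is} true --- and what you need --- is that the polar cone $\ncone_V(F,P)^\circ$, though a priori a subset of $V/\lin(F)$, actually lies inside the smaller subspace $\lin(P)/\lin(F)$: this is because $\ncone_V(F,P)$ contains the full linear subspace $\lin(P)^\perp$, so anything in its polar must pair to zero with $\lin(P)^\perp$, forcing it into $\lin(P)/\lin(F)$. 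Once this is said correctly, your steps (2)--(4) go through exactly as you outline, and in fact the $V$-independence in step (3) becomes immediate rather than requiring the direct-sum decomposition: you have shown $\ncone_V(F,P)^\circ = \fcone^p(F,P)$, and the right-hand side is defined without reference to $V$.
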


\subsubsection{McMullen's formula and a refinement of positivity}
In 1975 Danilov asked, in the context of toric varieties, whether it is possible to 
construct a function $\alpha$ such that for any integral polytope $P$, we have
\begin{equation}\label{equ:exterior}
	|P \cap \Z^D|= \displaystyle \sum_{F: \textrm{ a face of $P$}} \alpha(F,P) \ \nvol(F) ,
\end{equation}
where $\alpha(F,P)$ depends only on the normal cone of $P$ at $F,$ and $\nvol(F)$ is the volume of $F$ normalized to the lattice $\aff(F) \cap \Z^D.$

McMullen \cite{mcmullen1993} was the first to confirm the existence of \eqref{equ:exterior} in a non-constructive way. Hence, we refer to the above formula as \emph{McMullen's formula}. 
Pommersheim and Thomas \cite{pommersheim-thomas} provide a canonical construction based on choices of flags. 
Berline and Vergne \cite{berline-vergne} give a construction in a computable way. 
Most recently, Ring-Sch\"urmann \cite{RinSch} give another construction for McMullen's formula based on a choice of fundamental cells.

Before discussing a specific construction, even the existence of McMullen's formula has interesting consequences. In fact, it was one of the ingredients used in proving the results on higher integrality conditions discussed in \S \ref{subsubsec:kintegral}. More importantly, it provides another proof for Ehrhart's theorem (Theorem \ref{thm:ehrhart}) as well as a refinement of Ehrhart positivity. Note that pointed feasible cones do not change when we dilate a polytope. Thus, applying McMullen's formula to $tP$ and rearranging coefficients, we obtain a formula for the function $i(P,t):$
\[ i(P,t) =\sum_{k=0}^{\dim P} \left(\sum_{F: \text{$k$-dimensional face of $P$}} \alpha(P, F) \nvol(F) \right) \cdot t^k.\] 
Hence, $i(P,t)$ is a polynomial in $t$ of degree $\dim P,$ and
the coefficient of $t^k$ in $i(P,t)$ is given by 
\begin{equation}\label{equ:coeff}
	[t^k] i(P,t) = \sum_{F: \text{$k$-dimensional face of $P$}} \alpha(P, F) \nvol(F).
\end{equation}

\begin{ex} Setting $k=0$ in \eqref{equ:coeff}, we obtain
	\[ [t^0] i(P,t) = \sum_{v: \text{vertex of $P$}} \alpha(P, v) \nvol(v).\]
	Note that $[t^0] i(P,t)$ is the constant term of the Ehrhart polynomial of $P,$ which is known to be $1$ for any integral polytope $P.$ Furthermore, the normalized volume of any vertex is $1.$ Hence, the above equation becomes
	\[  \sum_{v: \text{vertex of $P$}} \alpha(P, v) =1.\]
	See Figure \ref{fig:diffalpha} for $\alpha$-values of the vertices of the triangle $P = \conv( (0,0), (2,0), (2,1))$ arising from different constructions.
\begin{figure}[h]
\begin{center}
\begin{tikzpicture}

	\begin{scope}[yshift=-2cm]
		\node at (-4,1.5){\footnotesize Pommersheim-Thomas}; 
\draw[fill=black] (-5,2) circle[radius=0.5mm];
\draw[fill=black] (-4,2) circle[radius=0.5mm];
\draw[fill=black] (-3,2) circle[radius=0.5mm];
\draw[fill=black] (-3,3) circle[radius=0.5mm];
\draw (-5,2) -- (-4,2) -- (-3,2) --  (-3,3) -- cycle;

\node[left] at (-5,2){$\frac{5}{12}$};
\node[right] at (-3,2){$\frac{5}{12}$};
\node[right] at (-3,3){$\frac{1}{6}$};
\end{scope}

\begin{scope}[xshift=4cm]
	\node at (-4,-0.5){\footnotesize{Berline-Vergne}}; 
\draw[fill=black] (-5,0) circle[radius=0.5mm];
\draw[fill=black] (-4,0) circle[radius=0.5mm];
\draw[fill=black] (-3,0) circle[radius=0.5mm];
\draw[fill=black] (-3,1) circle[radius=0.5mm];
\draw (-5,0) -- (-4,0) -- (-3,0) --  (-3,1) -- cycle;

\node[left] at (-5,0){$\frac{9}{20}$};
\node[right] at (-3,0){$\frac{1}{4}$};
\node[right] at (-3,1){$\frac{3}{10}$};
\end{scope}

\begin{scope}[xshift=8cm, yshift=2cm]
	\node at (-4,-2.5){\footnotesize{Ring-Sch\"urmann}}; 
\draw[fill=black] (-5,-2) circle[radius=0.5mm];
\draw[fill=black] (-4,-2) circle[radius=0.5mm];
\draw[fill=black] (-3,-2) circle[radius=0.5mm];
\draw[fill=black] (-3,-1) circle[radius=0.5mm];
\draw (-5,-2) -- (-4,-2) -- (-3,-2) --  (-3,-1) -- cycle;

\node[left] at (-5,-2){$\frac{1}{2}$};
\node[right] at (-3,-2){$\frac{1}{4}$};
\node[right] at (-3,-1){$\frac{1}{4}$};
\end{scope}

\end{tikzpicture}
\end{center}
\caption{Different $\alpha$-constructions.}
\label{fig:diffalpha}
\end{figure}
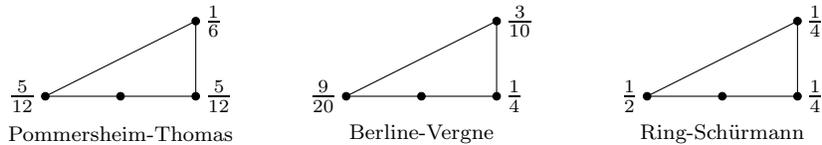
\end{ex}

Since $\nvol(F)$ is a positive number, 
it follows from \eqref{equ:coeff} that $\alpha$-values refine Ehrhart coefficients. We say a polytope $P$ is \emph{$\alpha$-positive for $k$-faces} if $\alpha(F, P)$ is positive for all $k$-dimensional faces $F$ of $P,$ and say $P$ is \emph{$\alpha$-positive} if all $\alpha$'s associated to $P$ are positive.  
The following result immediately follows from expression \eqref{equ:coeff}.
\begin{lem}\label{lem:red1}
Suppose $\alpha$ is a solution to McMullen's formula. Let $P$ be an integral polytope. For a fixed $k,$ 
if $P$ is $\alpha$-positive for $k$-faces, then the coefficient of $t^k$ in the Ehrhart polynomial $i(P,t)$ of $P$ is positive.

Hence, 
if $P$ is $\alpha$-positive, then $P$ is Ehrhart positive.
\end{lem}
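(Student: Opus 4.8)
The plan is to read off both claims directly from the coefficient formula \eqref{equ:coeff}, which is the content that has already been set up. Recall that $\alpha$ being a solution to McMullen's formula means precisely that for every integral polytope $Q$,
\[ |Q \cap \Z^D| = \sum_{F : \text{ a face of } Q} \alpha(F,Q)\, \nvol(F), \]
and that $\alpha(F,Q)$ depends only on the normal cone of $Q$ at $F$. Since dilating $P$ by a factor $t$ does not change the (pointed) feasible cones, and hence does not change the normal cones, the value $\alpha(F, tP)$ equals $\alpha(F', P)$ for the corresponding face $F'$ of $P$; applying McMullen's formula to $tP$ and using $\nvol(tF) = \nvol(F)\, t^{\dim F}$, one groups terms by the dimension $k$ of the face to obtain exactly \eqref{equ:coeff}:
\[ [t^k] i(P,t) = \sum_{F : \text{ $k$-dimensional face of } P} \alpha(P,F)\, \nvol(F). \]

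First I would invoke \eqref{equ:coeff} as given. For the first assertion, fix $k$ and assume $P$ is $\alpha$-positive for $k$-faces, i.e. $\alpha(F,P) > 0$ for every $k$-dimensional face $F$. Every summand on the right-hand side of \eqref{equ:coeff} is then a product of the positive number $\alpha(F,P)$ with the normalized volume $\nvol(F)$, which is strictly positive because $F$ is a genuine $k$-dimensional polytope (with at least one lattice point in its affine hull, as $P$ is integral). The sum is over a nonempty index set — a $d$-polytope has $k$-faces for every $0 \le k \le d$ — so $[t^k] i(P,t)$ is a sum of finitely many strictly positive terms, hence strictly positive.

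For the second assertion, suppose $P$ is $\alpha$-positive, meaning $\alpha(F,P) > 0$ for all faces $F$ of $P$ of every dimension. Then in particular $P$ is $\alpha$-positive for $k$-faces for each $k = 0, 1, \dots, \dim P$, so by the first part every Ehrhart coefficient $[t^k] i(P,t)$ is positive; that is exactly the statement that $P$ is Ehrhart positive. There is really no obstacle here: the lemma is a formal consequence of \eqref{equ:coeff} together with the positivity of normalized volumes, and the only thing to be slightly careful about is noting that the index sets are nonempty and that each $\nvol(F)$ is strictly (not merely weakly) positive, which holds because each $k$-face is a full-dimensional polytope inside its affine hull and that affine hull contains lattice points since $P$ is integral.
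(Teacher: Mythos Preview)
Your proof is correct and matches the paper's approach: the paper simply states that the lemma ``immediately follows from expression \eqref{equ:coeff}'' together with the positivity of $\nvol(F)$, and you have spelled out exactly those details. Your extra care about nonemptiness of the index set and strict positivity of $\nvol(F)$ is fine but not something the paper bothers to mention.
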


\subsubsection{BV-construction and the Reduction Theorem} \label{subsubsec:bvred} At the first glance, $\alpha$-positivity, being a refinement of Ehrhart-positivity, is a more difficult question to consider. However, for $\alpha$-constructions that satisfy certain properties, studying $\alpha$-positivity instead does not necessarily make the problem harder. Berline and Vergne \cite{berline-vergne} give such an $\alpha$-construction, of which we give a quick review below.
Recall that the \emph{indicator function} of a set $A \subseteq V$ is the function $[A]: V \to \R$ defined as $[A](x) = 1$ if $x \in A$, and $[A](x)=0$ if $x \not\in A.$ 
The \emph{algebra of rational cones}, denoted by $\cC(V)$, is the vector space over $\Q$ spanned by the indicator functions $[C]$ of all rational cones $C \subset V.$ We consider $\cC(V)$ a subspace of the vector space of all functions on $V.$ Hence, in general, the indicators $[C]$ of rational cones do not form a basis of $\cC(V)$ since there are many relations among them. 
%
\begin{thm}[Berline-Vergne] 
	There exists a function $\Psi$ from the set of indicator functions $[C]$ of rational cones $C$ in $V$ to $\R$ with the following properties: 

\begin{enumerate}[(P1)]
	\item $\Psi$ induces a \emph{valuation} on the algebra of rational cones in $V$, i.e., $\Psi$ induces a linear transformation from $\cC(V)$ to $\R.$ 
 
  \item If a cone $C$ contains a line, then $\Psi([C])=0$.

  \item 
	  $\Psi$ is invariant under orthogonal unimodular transformation, thus, is \emph{symmetric about coordinates}, that is, invariant under rearranging coordinates with signs.
  \item Setting \begin{equation}
	\alpha(F,P) :=\Psi([\fcone^p(F,P)]),
	\label{equ:alphapsi}
\end{equation}
gives a solution to McMullen's formula. 
\end{enumerate}
\end{thm}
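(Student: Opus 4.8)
The plan is to reconstruct the Berline--Vergne function following their ``local Euler--Maclaurin'' approach \cite{berline-vergne}, working inside the algebra of rational cones $\cC(V)$ and leaning on two classical valuations attached to a rational cone $\mathfrak{c} \subseteq V$: the \emph{exponential integral} $I(\mathfrak{c})(\xi) := \int_{\mathfrak{c}} e^{\langle \xi, x \rangle}\, dx$ and the \emph{exponential sum} $S(\mathfrak{c})(\xi) := \sum_{x \in \mathfrak{c} \cap \Z^D} e^{\langle \xi, x \rangle}$. Both extend to meromorphic functions of $\xi$ and are valuations on $\cC(V)$ (Lawrence, Khovanskii--Pukhlikov, Barvinok, Brion), and both vanish identically when $\mathfrak{c}$ contains a line, since $\mathfrak{c}$ is then invariant under translation by that line. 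Throughout I would fix the standard inner product on $\R^D$; it induces an inner product and an induced lattice on every rational subspace and every rational quotient of $\R^D$, and carrying out all the constructions relative to this fixed data is what will ultimately give property (P3).

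I would build $\Psi$ by induction on $\dim\mathfrak{c}$ via the ``transverse cone'' recursion of \cite{berline-vergne}. Set $\Psi([\{0\}]) = 1$. For $\dim\mathfrak{c} \ge 1$, attach to each proper face $\mathfrak{f}$ of $\mathfrak{c}$ its \emph{transverse cone} $\mathfrak{c}_\mathfrak{f}$, the image of $\mathfrak{c}$ under orthogonal projection onto $(\lin\mathfrak{f})^{\perp}$, a cone of dimension $\dim\mathfrak{c} - \dim\mathfrak{f} < \dim\mathfrak{c}$ for which $\Psi([\mathfrak{c}_\mathfrak{f}])$ is already defined. Berline and Vergne assemble from $S(\mathfrak{c})$, the $I(\mathfrak{f})$, and the lower numbers $\Psi([\mathfrak{c}_\mathfrak{f}])$ a meromorphic function in which the ``new'' contribution of $\mathfrak{c}$ is isolated; one then \emph{defines} $\Psi([\mathfrak{c}])$ to be the degree-zero part (value at $\xi = 0$) of that function. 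When $\mathfrak{c}$ contains a line, every face of $\mathfrak{c}$ is non-pointed, so $S(\mathfrak{c})$ and all the $I(\mathfrak{f})$ vanish and the recursion returns $\Psi([\mathfrak{c}]) = 0$; this is property (P2).

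Property (P3) is then nearly immediate: the standard inner product, the lattice $\Z^D$, and therefore $S$, $I$, and all the orthogonal projections used in the recursion are invariant under signed permutations of coordinates, indeed under any orthogonal lattice automorphism, so $\Psi$ inherits that invariance; in particular the construction is consistent on quotient spaces, so $\Psi([\fcone^p(F,P)])$ is meaningful and, by the polarity identity $\fcone^p(F,P) = \ncone(F,P)^\circ$, depends only on the normal cone of $P$ at $F$, as McMullen's formula requires. Property (P1) is more substantial: one must check that the recursion is well posed and that the resulting $\Psi$ is \emph{valuative} on cones, i.e.\ respects inclusion--exclusion; granting valuativity, $\Psi$ extends uniquely to a linear functional on $\cC(V)$ by Groemer's extension theorem. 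I would import the verification of valuativity and well-posedness from \cite{berline-vergne}.

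Finally, property (P4). Given an integral polytope $P$ with $\lin(P) \subseteq V$, apply Brion's theorem to write $S(P)(\xi)$ as the sum over vertices $v$ of the exponential sums of the supporting cones $v + \fcone(v,P)$; then iterate the transverse-cone recursion at each vertex and regroup the resulting terms along chains $v \le F$ in the face poset. The combinatorics is arranged, and this is the content of \cite{berline-vergne}, so that the coefficient with which the exponential integral $\int_F e^{\langle\xi,x\rangle}\,dx$, taken with respect to the lattice-normalized measure on $\aff(F)$, occurs is exactly $\Psi([\fcone^p(F,P)])$; letting $\xi \to 0$ turns $S(P)$ into $|P \cap \Z^D|$ and each such integral into $\nvol(F)$, producing \eqref{equ:exterior} with $\alpha(F,P) = \Psi([\fcone^p(F,P)])$. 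The main obstacle is precisely this last step together with the well-posedness and valuativity claim in (P1): showing that the iterated transverse decompositions reorganize cleanly into the single sum over faces of $P$, and that the number extracted at each stage of the recursion is well defined and behaves valuatively. These are the technical core of the Berline--Vergne paper, and the plan is to quote their results rather than re-derive them.
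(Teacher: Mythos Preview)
The paper does not give its own proof of this theorem: it is a survey, and this result is simply attributed to Berline and Vergne \cite{berline-vergne} and then used as a black box for the subsequent discussion of $\alpha$-positivity and the Reduction Theorem. So there is no ``paper's proof'' to compare your proposal against.

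That said, what you have written is a reasonable high-level sketch of the actual Berline--Vergne construction from \cite{berline-vergne}: the recursion on transverse cones, the use of the exponential integral and exponential sum valuations, the extraction of the constant term, and the derivation of McMullen's formula via Brion's decomposition. You are also candid that the technical core (well-posedness, valuativity, and the regrouping over faces) is being imported from \cite{berline-vergne} rather than reproved. Given that the paper itself imports the entire theorem from \cite{berline-vergne}, your proposal is, if anything, more detailed than what the paper provides, and is consistent with the cited source.
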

We refer to Berline-Vergne's construction of $\Psi$ and $\alpha$ as \emph{BV-construction} and \emph{BV-$\alpha$-valuation}, respectively. If $\alpha$ is the BV-$\alpha$-valuation, we use the terminology \emph{BV-$\alpha$-positivity} instead of $\alpha$-positivity. 

Properties (P1) and (P2) are the ``certain valuation properties'' we mentioned at the beginning of \S \ref{subsec:mcmullen}. The following Reduction Theorem lays out a consequence of these two properties. 

\begin{thm}[Castillo-L., Reduction Theorem \cite{BValpha}] \label{thm:reduction-gen}
	Suppose $\Psi$ is a function from the set of indicator functions of rational cones $C$ in $V$ to $\R$ such that properties (P1) and (P2) hold, and suppose $\alpha$ is defined as in \eqref{equ:alphapsi}.

Let $P$ and $Q$ be two polytopes such that $\lin(P)$ and $\lin(Q)$ are both subspaces of $V.$
Assume the normal fan $\Sigma_V(P)$ of $P$ with respect to $V$ is a refinement of the normal fan $\Sigma_V(Q)$ of $Q$ with respect to $V$.

Then for any fixed $k,$ 
if $P$ is $\alpha$-positive for $k$-faces, then $Q$ is $\alpha$-positive for $k$-faces.
\end{thm}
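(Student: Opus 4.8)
The plan is to exploit the fact that the normal fan $\Sigma_V(Q)$ is coarsened from $\Sigma_V(P)$, so that each face of $Q$ decomposes, at the level of feasible cones, into pieces coming from faces of $P$ of the corresponding dimension. Concretely, fix a $k$-dimensional face $G$ of $Q$. Its normal cone $\ncone_V(G,Q)$ is a union of normal cones $\ncone_V(F,P)$ for various faces $F$ of $P$; among these faces $F$, exactly the ones of dimension $k$ contribute normal cones of full dimension $\dim V - k$ inside $\ncone_V(G,Q)$, while higher-dimensional faces of $Q$ contribute lower-dimensional normal cones. Dualizing via the lemma $\ncone(F,P)^\circ = \fcone^p(F,P)$, I expect to obtain an identity in the algebra of rational cones $\cC(V)$ of the shape
\[
[\fcone^p(G,Q)] = \sum_{\substack{F \le P \\ \dim F = k,\ G \subseteq F}} [\fcone^p(F,P)] + (\text{terms indexed by lower-dimensional cones, i.e.\ cones containing a line}),
\]
where the correction terms are indicator functions of cones that are not pointed — these arise because the inclusion-exclusion on a polyhedral subdivision produces boundary pieces, and a feasible cone of $P$ at a face $F'$ with $\dim F' > k$, viewed inside the relevant quotient space $V/\lin(G)$, contains a line.

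First I would make this decomposition precise: the faces $F$ of $P$ whose normal cones lie in $\ncone_V(G,Q)$ are exactly the faces $F$ with $G \subseteq F$ (this is the standard dictionary between the face poset of $Q$ and the subfan of $\Sigma_V(P)$ refining $\ncone_V(G,Q)$), and among those, the $k$-dimensional ones index the maximal cones of that subfan. Then I would write the Brianchon--Gram / inclusion-exclusion relation for the subdivision of $\fcone^p(G,Q)$ induced by this subfan; after applying polarity cone by cone, the top-dimensional pieces are precisely $\fcone^p(F,P)$ for $\dim F = k$, $G \subseteq F$, and every other piece, being a proper face cone in the subdivision, fails to be pointed once we work modulo $\lin(G)$ — so its indicator is killed by $\Psi$ in the next step.

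Next I apply $\Psi$ to both sides. Property (P1) (valuation/linearity on $\cC(V)$) lets me distribute $\Psi$ over the sum, and property (P2) ($\Psi$ vanishes on cones containing a line) annihilates all the correction terms. Using the definition $\alpha(F,P) = \Psi([\fcone^p(F,P)])$, this yields exactly
\[
\alpha(G,Q) = \sum_{\substack{F \le P \\ \dim F = k,\ G \subseteq F}} \alpha(F,P).
\]
Since $P$ is $\alpha$-positive for $k$-faces, every summand on the right is positive, and the sum is nonempty (there is at least one $k$-face of $P$ containing $G$, as $G$ is a face of the coarser polytope and the refinement must resolve it into at least one full-dimensional normal cone). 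Hence $\alpha(G,Q) > 0$, and as $G$ was an arbitrary $k$-face of $Q$, $Q$ is $\alpha$-positive for $k$-faces.

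\textbf{Main obstacle.} The routine-looking but genuinely delicate step is establishing the cone identity in $\cC(V)$ with the correct error terms — i.e.\ verifying that the inclusion-exclusion over the induced subfan of $\Sigma_V(P)$, once dualized by polarity, produces only the top-dimensional pointed cones $\fcone^p(F,P)$ plus indicator functions of non-pointed cones. One must be careful that polarity interchanges the face lattice of the subdivision correctly (so that lower-dimensional normal cones dualize to feasible cones containing a line in the appropriate quotient), and that the combinatorial coefficients in the inclusion-exclusion collapse as claimed. This is where the hypotheses $\lin(P), \lin(Q) \subseteq V$ and the precise behavior of $\fcone^p$ under the quotient $/\lin(F)$ need to be handled with care; everything after that is a clean two-line application of (P1) and (P2).
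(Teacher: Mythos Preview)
The paper, being a survey, does not actually prove this theorem here; it cites \cite{BValpha} and only illustrates the mechanism in Example~\ref{ex:for-red-cor}: when $\ncone(v,Q_2)$ is the union of two vertex normal cones of $P$, properties (P1)--(P2) force $\alpha(v,Q_2)$ to equal the sum of the two corresponding $\alpha$-values of $P$. Your proposal is precisely this mechanism carried out for general $k$-faces --- subdivide the normal cone, dualize, and use (P1) to distribute and (P2) to kill the non-pointed pieces --- so it matches what the paper sketches.

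One concrete slip worth fixing: you index the contributing $k$-faces $F$ of $P$ by the condition ``$G \subseteq F$'', but $G$ is a face of $Q$ and $F$ a face of $P$; these live in different polytopes and the containment is meaningless as written. The correct index set is
\[
\{\,F \text{ a $k$-face of } P \ : \ \ncone_V(F,P) \subseteq \ncone_V(G,Q)\,\},
\]
i.e.\ those $F$ whose normal cones are the maximal cones of the subfan of $\Sigma_V(P)$ inside $\ncone_V(G,Q)$. Once you reindex this way, note that each such $\ncone_V(F,P)$ has the same dimension $\dim V - k$ as $\ncone_V(G,Q)$ and is contained in it, hence spans the same linear subspace; dualizing gives $\lin(F)=\lin(G)$, so all the pointed feasible cones in your identity genuinely live in the common quotient $V/\lin(G)$ and the sum is well-posed. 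With that correction the polarity-of-a-subdivision identity you invoke (modulo cones containing a line) is standard, the index set is nonempty since $\ncone_V(G,Q)$ is $(\dim V - k)$-dimensional and must contain at least one maximal cone of the refining fan, and (P1)--(P2) finish exactly as you say.
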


One important implication of the Reduction Theorem is that we can reduce the problem of $\alpha$-positivity of a family of polytopes constructed from a fan to the problem of $\alpha$-positivity of a single polytope in the family.

\begin{defn}\label{defn:normalfamily} Let $\Sigma$ be a \emph{projective} fan in $V^*$, i.e., a fan that is a normal fan of some polytope. Let $\Poly(\Sigma)$ be the set of polytopes $Q$ whose normal fan $\Sigma_V(Q)$ with respect to $V$ coarsens $\Sigma.$ 
\end{defn}

\begin{cor}\label{cor:reduction-gen}
	Assume the hypothesis on $\Psi$ and $\alpha$ in Theorem \ref{thm:reduction-gen}. Let $\Sigma$ be a projective fan in $V^*$, and let $P$ be a polytope such that $\Sigma_V(P) = \Sigma.$
	Then $\alpha$-positivity (for $k$-faces) of $P$ implies $\alpha$-positivity (for $k$-faces) of $Q$ for any $Q \in \Poly(\Sigma).$
	
	Assume further that $\alpha$ is a solution to McMullen's formula.
Then for any integral polytope $Q \in \Poly(\Sigma)$, $\alpha$-positivity for $k$-faces of $P$ implies the coefficient of $t^k$ in $i(Q,t)$ is positive. Hence, $\alpha$-positivity of $P$ implies Ehrhart-positivity of $Q$. 
\end{cor}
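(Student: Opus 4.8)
The plan is to deduce the corollary directly from the Reduction Theorem (Theorem \ref{thm:reduction-gen}) together with Lemma \ref{lem:red1}, with essentially no new computation required. First I would observe that, by Definition \ref{defn:normalfamily}, every $Q \in \Poly(\Sigma)$ is a polytope whose normal fan $\Sigma_V(Q)$ is coarsened by $\Sigma$; equivalently, $\Sigma$ refines $\Sigma_V(Q)$. Since $P$ is chosen so that $\Sigma_V(P) = \Sigma$, this says precisely that $\Sigma_V(P)$ refines $\Sigma_V(Q)$. Moreover $\lin(P)$ and $\lin(Q)$ are subspaces of $V$ (this is built into the setup for $\Poly(\Sigma)$, since the normal fans are taken with respect to $V$). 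Hence the hypotheses of Theorem \ref{thm:reduction-gen} are met with this $P$ and this $Q$, and applying that theorem gives: for each fixed $k$, if $P$ is $\alpha$-positive for $k$-faces, then $Q$ is $\alpha$-positive for $k$-faces. Taking the conjunction over all $k$ from $0$ to $\dim P$ yields the first assertion, that $\alpha$-positivity of $P$ implies $\alpha$-positivity of $Q$.

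For the second part I would add the hypothesis that $\alpha$ is a solution to McMullen's formula, and let $Q \in \Poly(\Sigma)$ be integral. By the first part, $\alpha$-positivity for $k$-faces of $P$ transfers to $Q$, so $\alpha(F,Q) > 0$ for every $k$-dimensional face $F$ of $Q$. Now invoke Lemma \ref{lem:red1}: since $\alpha$ solves McMullen's formula and $Q$ is an integral polytope that is $\alpha$-positive for $k$-faces, the coefficient $[t^k] i(Q,t)$ is positive. Running this over all $k$ and using that $\alpha$-positivity of $P$ means $\alpha$-positivity for $k$-faces of $P$ for every $k$, we conclude that every coefficient of $i(Q,t)$ is positive, i.e. $Q$ is Ehrhart positive.

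I do not anticipate a genuine obstacle here; the content of the corollary lies entirely in the Reduction Theorem, whose proof (not in this excerpt) is where the valuation properties (P1), (P2) of $\Psi$ and the polarity relation $\ncone(F,P)^\circ = \fcone^p(F,P)$ do the real work. The only point requiring a little care in writing is the direction of the refinement relation: one must track that ``$\Sigma_V(Q)$ coarsens $\Sigma$'' is the same as ``$\Sigma = \Sigma_V(P)$ refines $\Sigma_V(Q)$,'' which is exactly the hypothesis phrasing in Theorem \ref{thm:reduction-gen}. Once that bookkeeping is in place, the corollary is immediate.
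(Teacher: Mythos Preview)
Your proposal is correct and follows exactly the same route as the paper: the first part is a direct application of the Reduction Theorem (Theorem \ref{thm:reduction-gen}) after unpacking Definition \ref{defn:normalfamily}, and the second part follows from the first together with Lemma \ref{lem:red1}. The paper's own proof is just a two-line pointer to these two results, so your write-up simply spells out the bookkeeping in more detail.
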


\begin{proof} The first part follows directly from the Reduction Theorem, and the second assertion follows from the first part and Lemma \ref{lem:red1}.
\end{proof}

Therefore, even though proving $\alpha$-positivity is more difficult than proving Ehrhart-positivity for an individual polytope, it could be easier if we consider a family of polytopes $\Poly(\Sigma)$ associated to a fixed projective fan $\Sigma$, as we only need to prove $\alpha$-positivity for one polytope in the family.
Finally, because the BV-construction satisfies properties (P1), (P2) and (P4), all the results discussed above apply to the BV-construction or the BV-$\alpha$-valuation. 
These ideas are illustrated by Example \ref{ex:for-red-cor} below.




\subsection{Positivity of generalized permutohedra}\label{subsec:pos-gp}

In this part, 
we apply the Reduction Theorem to reduce our first conjecture --- Conjecture \ref{conj:posgp} --- to a conjecture on $\alpha$-positivity of regular permutohedra.  
Then we report partial progress made on both conjectures by using McMullen's formula with BV-$\alpha$-valuation \cite{ehrhartpos-gp-fpsac, BValpha}.

\subsubsection{Second positivity conjecture}
Postnikov, Reiner, and Williams give several equivalent definitions for generalized permutohedra, one of which uses concepts of normal fans \cite[Proposition 3.2]{PosReiWil}.
Recall that the \emph{Braid fan}, denoted by $\Br_d,$ is the complete fan in $\R^{d+1}$ given by the hyperplanes $x_i - x_j = 0$ for all $i\neq j$.

\begin{prop}[Postnikov-Reiner-Williams] \label{prop:coarser}
	A polytope $P$ in $V=\R^{d+1}$ is a generalized permutohedron if and only if its normal fan $\Sigma_V(P)$ with respect to $V$ is refined by the Braid fan $\Br_d$.
\end{prop}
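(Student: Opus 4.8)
The plan is to derive the proposition from two ingredients, after which it becomes a purely formal statement. The first ingredient is that the normal fan $\Sigma_V(\Perm(\balpha))$ with respect to $V=\R^{d+1}$ equals the Braid fan $\Br_d$ for every strictly increasing $\balpha$. The second is the classical \emph{deformation correspondence}: for a fixed polytope $Q$, the polytopes obtainable from $Q$ by translating its vertices while keeping all edge directions fixed (edges permitted to degenerate) are precisely the polytopes $P$ with $\lin(P)\subseteq\lin(Q)$ whose normal fan $\Sigma_V(P)$ is coarsened from $\Sigma_V(Q)$, i.e.\ lies in $\Poly(\Sigma_V(Q))$ in the sense of Definition~\ref{defn:normalfamily}. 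Granting both, $P$ is a generalized permutohedron $\iff$ $P$ is such a deformation of some usual permutohedron $\Perm(\balpha)$ $\iff$ $\Sigma_V(P)$ is coarsened from $\Sigma_V(\Perm(\balpha))=\Br_d$; and since every strictly increasing $\balpha$ gives the same normal fan, the choice of $\balpha$ is irrelevant.

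For the first ingredient I would compute the normal cones of $\Perm(\balpha)$ by hand. Its vertices are the $(d+1)!$ points $v_\pi=(\alpha_{\pi(1)},\dots,\alpha_{\pi(d+1)})$, $\pi\in\fS_{d+1}$, pairwise distinct since $\balpha$ is strictly increasing. By the rearrangement inequality, a functional $\u\in V^*$ attains its maximum over these vertices at $v_\pi$ exactly when $u_1,\dots,u_{d+1}$ are sorted in the same order as $\alpha_{\pi(1)},\dots,\alpha_{\pi(d+1)}$; hence $\ncone_V(v_\pi,\Perm(\balpha))$ is the closed ``order cone'' $\{\u\in V^*:u_{\pi^{-1}(d+1)}\ge\cdots\ge u_{\pi^{-1}(1)}\}$. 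As $\pi$ ranges over $\fS_{d+1}$ these are exactly the maximal cones of $\Br_d$ (both fans sharing the lineality line $\R\cdot\1$), so the two fans coincide. The same computation identifies the edge of $\Perm(\balpha)$ dual to the wall $\{u_a=u_b\}$ of $\Br_d$ as being parallel to $\be_a-\be_b$; this is the local mechanism linking the special walls $x_i=x_j$ of $\Br_d$ to the special edge directions $\be_i-\be_j$ of permutohedra.

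For the deformation correspondence I would argue with support functions. Write $h_P$ for the support function of $P$ (convex, positively homogeneous, linear exactly on the cones of $\Sigma_V(P)$), fix a strictly increasing $\balpha$, and set $h_\epsilon:=(1-\epsilon)\,h_{\Perm(\balpha)}+\epsilon\,h_P$ for $\epsilon\in[0,1]$, letting $P_\epsilon$ be the polytope with support function $h_\epsilon$, so $P_0=\Perm(\balpha)$ and $P_1=P$. If $\Sigma_V(P)$ is coarsened from $\Br_d$ then $h_P$ is linear on every cone of $\Br_d$, while $h_{\Perm(\balpha)}$ is strictly convex across every wall of $\Br_d$ (its linear pieces on adjacent chambers differ); hence for $\epsilon<1$ the function $h_\epsilon$ is still strictly convex across every wall, so $\Sigma_V(P_\epsilon)=\Br_d$. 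Along the path the vertex of $P_\epsilon$ indexed by each maximal cone of $\Br_d$ moves linearly in $\epsilon$, and the two vertices separated by a wall $\{u_a=u_b\}$ always differ by a multiple of $\be_a-\be_b$; letting $\epsilon\to1$, some of these vertices may collapse, realizing $P$ as $\Perm(\balpha)$ with vertices moved while preserving edge directions. Conversely, a deformation of $\Perm(\balpha)$ preserving edge directions creates no edge in a new direction, and merging faces only coarsens the normal fan, so $\Sigma_V(P)$ must be a coarsening of $\Br_d$.

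The step I expect to be the main obstacle is making the ``$\Rightarrow$'' half of the deformation correspondence fully rigorous: converting the geometric hypothesis ``the vertices are moved with all edge directions preserved'' into the fan-theoretic conclusion ``no new wall of the normal fan appears'', and handling the bookkeeping of degenerations when $P$ is lower-dimensional (so $\Sigma_V(P)$ acquires extra lineality). This correspondence between deformations/weak Minkowski summands of $Q$ and coarsenings of $\Sigma_V(Q)$ is classical --- it is the description of the deformation (type) cone of a fan --- and the proposition itself is \cite[Proposition~3.2]{PosReiWil}, so in a survey one may legitimately either reproduce the short support-function argument sketched above or simply cite it.
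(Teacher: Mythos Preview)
The paper does not prove this proposition at all: it is stated with attribution to Postnikov--Reiner--Williams and cited as \cite[Proposition~3.2]{PosReiWil}, with no argument given. You correctly anticipated this possibility in your final sentence.

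Your sketch is therefore strictly more than what the paper provides, and it is essentially the standard argument. The computation of the normal fan of $\Perm(\balpha)$ via the rearrangement inequality is correct, and the support-function interpolation $h_\epsilon=(1-\epsilon)h_{\Perm(\balpha)}+\epsilon h_P$ is the right device for the ``$\Leftarrow$'' direction of the deformation correspondence. Your own assessment of the weak point is accurate: the ``$\Rightarrow$'' direction (moving vertices while preserving edge directions forces the normal fan to coarsen) needs care, particularly in tracking what happens as edges degenerate and faces merge along a continuous deformation, and in the lower-dimensional case where $P$ lies in a proper affine subspace of the hyperplane $\sum x_i=\text{const}$. In a survey context, citing \cite{PosReiWil} is exactly what the paper does; if you wanted a self-contained account, the cleanest route is to bypass the continuous-deformation picture entirely and instead characterize generalized permutohedra by the submodularity inequalities on the face-defining right-hand sides, which is equivalent to the support function being linear on each chamber of $\Br_d$.
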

Using the notation we give in Definition \ref{defn:normalfamily}, the above result precisely says that the family of generalized permutohedra in $\R^{d+1}$ is $\Poly(\Br_d).$
Furthermore, it follows from \cite[Proposition 2.6]{post} that any usual permutohedron in $\R^{d+1}$ has the Braid fan $\Br_d$ as its normal fan. 
In particular, the normal fan of the regular permutohedron $\Pi_d$ is $\Br_d.$
In \cite{BValpha}, Castillo and the author use these results together with the Reduction theorem and its consequence (i.e., Corollary \ref{cor:reduction-gen}) to reduce Conjecture \ref{conj:posgp} to the following conjecture:
\begin{conj}[Castillo-L.]\label{conj:alphas}
  Every regular permutohedron $\Pi_d$ is BV-$\alpha$-positive.
\end{conj}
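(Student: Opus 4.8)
We sketch a possible line of attack on Conjecture~\ref{conj:alphas}; recall that, by the Reduction Theorem and Corollary~\ref{cor:reduction-gen}, proving it would settle Conjecture~\ref{conj:posgp}. The plan is to reformulate the conjecture as a finite, purely local positivity statement about the Berline--Vergne valuation $\Psi$, and then to attack that statement dimension by dimension.

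\emph{Reduction to composition types.} The faces of $\Pi_d$ are indexed by ordered set partitions $(B_1,\dots,B_k)$ of $[d+1]$; such a face $F$ has dimension $d+1-k$ and \emph{type} $(|B_1|,\dots,|B_k|)$, a composition of $d+1$. The coordinate-permutation action of $\fS_{d+1}$ consists of orthogonal unimodular transformations, carries $\Pi_d$ to itself, and sends a face $F$ to a face $wF$ of the same type with $\fcone^p(wF,\Pi_d)=w\cdot\fcone^p(F,\Pi_d)$; by property (P3) of $\Psi$, the value $\alpha(F,\Pi_d)=\Psi([\fcone^p(F,\Pi_d)])$ therefore depends only on the type of $F$. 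Writing $\alpha_{\bc}$ for this common value when $\bc=(c_1,\dots,c_k)$, Conjecture~\ref{conj:alphas} becomes the assertion that $\alpha_{\bc}>0$ for every composition $\bc$ of $d+1$, a finite list of inequalities for each $d$.

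\emph{Describing the cones.} The next step is to write $\fcone^p(F,\Pi_d)$ down explicitly for a face $F$ of type $\bc$. Such an $F$ is cut out of $\Pi_d$ by the $k-1$ facet equations $\sum_{i\in B_1\cup\cdots\cup B_j}x_i=\mathrm{const}$ ($1\le j\le k-1$), so $\ncone(F,\Pi_d)$ is spanned, modulo the all-ones vector, by the indicator vectors of the prefixes $B_1\cup\cdots\cup B_j$, and its polar $\fcone^p(F,\Pi_d)$ is a simplicial cone of dimension $k-1$ generated by the images in $\R^{d+1}/\lin(F)$ of the ``between-block'' roots $\be_{p_j}-\be_{p_j+1}$, where $p_j=c_1+\cdots+c_j$. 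Carrying along the Euclidean structure and the quotient lattice, this is a ``weighted'' version of the type-$A_{k-1}$ simple-root cone, with metric and lattice depending on the block sizes $c_j$: for $k=2$ it is a ray and $\alpha_{\bc}=\tfrac12$ (recovering the classical second Ehrhart coefficient), while for $k=d+1$ it is the full type-$A_d$ simple-root cone at a vertex of $\Pi_d$. The conjecture has now become the statement that $\Psi$ is positive on every such weighted braid cone.

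\emph{Evaluating $\Psi$, and the main obstacle.} When $k$ is close to $d+1$ the feasible cone has dimension $0,1,2,3$, and $\alpha_{\bc}$ can be computed with the known low-dimensional formulas for $\Psi$ --- using property (P2), that $\Psi$ kills cones containing a line, together with the valuation property (P1) to triangulate --- and checked to be positive; this is exactly how one obtains, through Corollary~\ref{cor:reduction-gen}, positivity of the top few Ehrhart coefficients of every integral generalized permutohedron. The hard part will be uniform control of $\Psi$ on the higher-dimensional weighted braid cones, since the Berline--Vergne construction is recursive with alternating-sign corrections and its positivity is invisible from the definition. I would attack this along two complementary lines: (a)~look for a manifestly positive closed formula for $\Psi$ on weighted braid cones specifically, exploiting that $\Pi_d$ is a translate of the graphical zonotope of $K_{d+1}$ (so its feasible cones are very structured) and that the Ehrhart coefficients of $\Pi_d$ already count forests (Example~\ref{ex:regular}), which yields nontrivial linear relations among the $\alpha_{\bc}$; and (b)~set up an induction on $d$ from the fact that each facet of $\Pi_d$ is a product of two smaller permutohedra, together with the compatibility of $\alpha$ with passing to faces, arranged so that positivity is preserved. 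In parallel one should verify Conjecture~\ref{conj:alphas} computationally for small $d$ (feasible, since only finitely many cones must be tested in each dimension), and keep in reserve the option of replacing the Berline--Vergne $\alpha$ by another solution of McMullen's formula --- the Pommersheim--Thomas or Ring--Sch\"urmann construction --- should one of those be positive on braid cones; any substitute must still satisfy properties (P1) and (P2) for the Reduction Theorem and property (P3) for the reduction to types, so this should be checked first. I expect line (a) or line (b) to be the crux: everything upstream is bookkeeping, whereas producing a positive formula --- or a positivity-preserving recursion --- for the Berline--Vergne valuation on braid cones of arbitrary dimension is precisely the open problem.
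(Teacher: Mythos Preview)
This statement is a \emph{conjecture} in the paper, not a theorem; the paper does not prove it and presents only partial progress (Theorem~\ref{thm:truelowdim}, Theorem~\ref{thm:34coeff}, Lemma~\ref{lem:edge}). You correctly recognise this and offer a research outline rather than a proof, ending with the honest admission that the positivity of $\Psi$ on higher-dimensional braid cones ``is precisely the open problem.'' That is the right assessment.

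Your outline is largely in sync with what the paper actually does. The reduction to composition types via property~(P3) is exactly the symmetry argument the paper exploits (cf.\ the proof sketch of Lemma~\ref{lem:edge}, which groups edges by type into $\lceil d/2\rceil$ classes). Your suggestion to verify small~$d$ computationally is Theorem~\ref{thm:truelowdim} ($d\le 6$). Your remark that low-dimensional feasible cones can be handled by closed formulas for $\Psi$ matches Theorem~\ref{thm:34coeff} (codimension $2$ and $3$). Your line~(a), extracting linear relations among the $\alpha_{\bc}$ from known Ehrhart coefficients, is close in spirit to the proof of Lemma~\ref{lem:edge}, though the paper uses Ehrhart polynomials of several type-$\cY$ generalized permutohedra (via Theorem~\ref{thm:typey}) to assemble a triangular linear system for the edge $\alpha$-values, rather than using only the zonotope description of $\Pi_d$ itself; that extra supply of polytopes is what makes the system solvable, and you would need something similar.

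One indexing slip: you write ``when $k$ is close to $d+1$ the feasible cone has dimension $0,1,2,3$,'' but since $\dim\fcone^p(F,\Pi_d)=\operatorname{codim}(F)=k-1$, low-dimensional feasible cones correspond to \emph{small} $k$ (i.e., high-dimensional faces), consistent with your own earlier observation that $k=2$ gives a ray and $k=d+1$ gives the full $d$-dimensional vertex cone. The ``top few Ehrhart coefficients'' indeed come from small $k$, not large. This does not affect the substance of your plan, but the sentence as written is backwards.
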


The following example demonstrates how Corollary \ref{cor:reduction-gen} works and why Conjecture \ref{conj:posgp} can be reduced to Conjecture \ref{conj:alphas}.
\begin{ex}\label{ex:for-red-cor}
	Let $P, Q_1, Q_2$ and $Q_3$ be the $2$-dimensional polytopes together with their normal fans shown in Figure \ref{fig:red-ex}. 
One notices that $P$ is the regular permutohedron $\Pi_2$ whose normal fan is $\Br_2$, and each $Q_i$ is a generalized permutohedron whose normal fan coarsens $\Br_2$.

	All the BV-$\alpha$-values of the six vertices of $P$ are $1/6.$ Since $Q_1$ has the same normal fan as $P$, all of its six vertices also have the same BV-$\alpha$-values. Now the normal fan of $Q_2$ coarsens that of $P.$ In particular, if we let $v$ be the vertex on the bottom-left of $Q_2,$ then the normal cone $\ncone(v,Q_2)$ of $Q_2$ at $v$ is the union of the normal cones of $P$ at two of its vertices. It is a consequence of the ``valuation properties'' (P1) and (P2) that the BV-$\alpha$-values $\alpha(v,Q_2)$ is the sum of the BV-$\alpha$-values of these two vertices of $P.$ Therefore, as shown in the figure, $\ds \alpha(v,Q_2) = 1/6 + 1/6 = 1/3.$ One sees that similar phenomenon happens for the polytope $Q_3.$

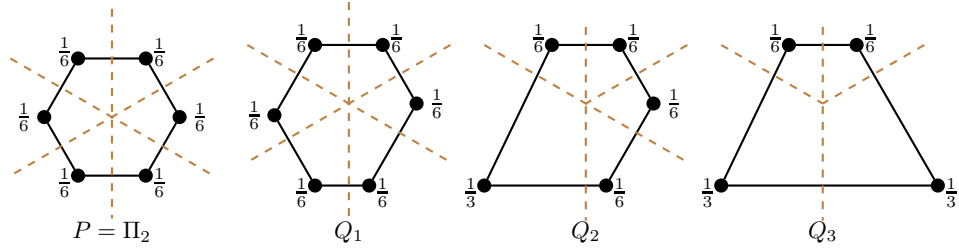
\begin{figure}
	\begin{center}
		\begin{tikzpicture}[thick, scale=0.9, baseline=80]
	  \begin{scope}[xshift=-5.5cm, yshift=-0.2cm] 
  \tikzstyle{every node}=[circle, fill, draw,inner sep=2pt, scale=0.8]
\node (a) at (-1,1) {};
\node (b) at (-0.5,0.134) {};
\node (c) at (0.5,0.134) {};
\node (d) at (1,1) {};
\node (e) at (0.5,1.866) {};
\node (f) at (-0.5,1.866) {};

\tikzstyle{every node}=[inner sep=1pt, minimum width=14pt,scale=0.7]

\draw [thick] (a) -- (b) -- (c) -- (d) -- (e) --(f) -- (a);
\draw [thick, dashed, brown] (1.5, 1.866) -- (-1.5, 0.134);
\draw [thick, dashed, brown] (-1.5, 1.866) -- (1.5, 0.134);
\draw [thick, dashed, brown] (0,2.6) -- (0,0.5-1);

\tikzstyle{every node}=[inner sep=1pt, minimum width=14pt,scale=0.9]
  \node at (-1.3,1) {$\frac{1}{6}$};
  \node at (-0.7,0.034) {$\frac{1}{6}$};
  \node at (0.7,0.034) {$\frac{1}{6}$};
  \node at (1.3,1) {$\frac{1}{6}$};
  \node at (0.7,1.966) {$\frac{1}{6}$};
  \node at (-0.7,1.966) {$\frac{1}{6}$};

  \node at (0,-0.7) {$P = \Pi_2$};
\end{scope}

\begin{scope}[xshift=-2cm] 
  \tikzstyle{every node}=[circle, fill, draw,inner sep=2pt, scale=0.8]
\node (a) at (-1.1,0.8268) {};
\node (b) at (-0.5,-0.2124) {};
\node (c) at (0.3,-0.2124) {};
\node (d) at (1,1) {};
\node (e) at (0.5,1.866) {};
\node (f) at (-0.5,1.866) {};

\draw [thick] (a) -- (b) -- (c) -- (d) -- (e) --(f) -- (a);

\draw [thick, dashed, brown] (1.5, 1.866) -- (-1.5, 0.134);
\draw [thick, dashed, brown] (-1.5, 1.866) -- (1.5, 0.134);
\draw [thick, dashed, brown] (0,2.5) -- (0,0.268-1);

\tikzstyle{every node}=[inner sep=1pt, minimum width=14pt,scale=0.9]
\node at (-1.4,0.8268) {$\frac{1}{6}$};
\node at (-0.7,-0.3124) {$\frac{1}{6}$};
\node at (0.5,-0.3124) {$\frac{1}{6}$};
\node at (1.3,1) {$\frac{1}{6}$};
\node at (0.7,1.966) {$\frac{1}{6}$};
\node at (-0.7,1.966) {$\frac{1}{6}$};

  \node at (0,-0.9) {$Q_1$};
\end{scope}

\begin{scope}[xshift=1.5cm]
  \tikzstyle{every node}=[circle, fill, draw,inner sep=2pt, scale=0.8]
\node (b) at (-1.5,-0.2124) {};
\node (c) at (0.3,-0.2124) {};
\node (d) at (1,1) {};
\node (e) at (0.5,1.866) {};
\node (f) at (-0.5,1.866) {};

\draw [thick] (b) -- (c) -- (d) -- (e) --(f) -- (b);

\draw [thick, dashed, brown] (1.5, 1.866) -- (0,1);
\draw [thick, dashed, brown] (-1.5, 1.866) -- (1.5, 0.134);
\draw [thick, dashed, brown] (0,2.132) -- (0,0.268-1);

\tikzstyle{every node}=[inner sep=1pt, minimum width=14pt,scale=0.9]
\node at (-1.7,-0.3124) {$\frac{1}{3}$};
\node at (0.5,-0.3124) {$\frac{1}{6}$};
\node at (1.3,1) {$\frac{1}{6}$};
\node at (0.7,1.966) {$\frac{1}{6}$};
\node at (-0.7,1.966) {$\frac{1}{6}$};

  \node at (0,-0.9) {$Q_2$};
\end{scope}

\begin{scope}[xshift=5cm] 
  \tikzstyle{every node}=[circle, fill, draw,inner sep=2pt, scale=0.8]
\node (b) at (-1.5,-0.2124) {};
\node (c) at (1.7,-0.2124) {};
\node (e) at (0.5,1.866) {};
\node (f) at (-0.5,1.866) {};

\draw [thick] (b) -- (c) -- (e) --(f) -- (b);

\draw [thick, dashed, brown] (1.5, 1.866) -- (0,1);
\draw [thick, dashed, brown] (-1.5, 1.866) -- (0,1);
\draw [thick, dashed, brown] (0,2.132) -- (0,0.268-1);

\tikzstyle{every node}=[inner sep=1pt, minimum width=14pt,scale=0.9]
\node at (-1.7,-0.3124) {$\frac{1}{3}$};
\node at (1.9,-0.3124) {$\frac{1}{3}$};
\node at (0.7,1.966) {$\frac{1}{6}$};
\node at (-0.7,1.966) {$\frac{1}{6}$};

  \node at (0,-0.9) {$Q_3$};
\end{scope}

  \end{tikzpicture}
\end{center}
\caption{Examples for Corollary \ref{cor:reduction-gen}.}
\label{fig:red-ex}
\end{figure}

The above discussion shows that even if we did not know the BV-$\alpha$-values of vertices of $P,$ because each BV-$\alpha$-value arising from $Q_i$ is a summation of a subset of BV-$\alpha$-values of vertices of $P,$ BV-$\alpha$-positivity of vertices of the regular permutohedron $P=\Pi_2$ would imply BV-$\alpha$-positivity of vertices of the generalized permutohedron $Q_i,$ and thus would imply the constant Ehrhart coefficient of $Q_i$ is positive. 
\end{ex}

Conjecture \ref{conj:alphas} was the main conjecture studied in \cite{BValpha}, and partial progress was made on proving it, which gave us corresponding partial results on Conjecture \ref{conj:posgp}.

\subsubsection{Partial results}


The first approach of attacking Conjecture \ref{conj:alphas} is to directly compute BV-$\alpha$-valuations. In order to do that, we need to compute the BV-construction $\Psi.$ 
One obvious benefit of considering Conjecture \ref{conj:alphas} instead of Conjecture \ref{conj:posgp} is that in each dimension there is only one regular permutohedron, and thus there are a limited number of BV-$\alpha$-values or $\Psi$-values to be computed, especially for small $d.$ 
Therefore, by explicit computation, we obtain the following result.
\begin{thm}[Castillo-L.]\label{thm:truelowdim}
  For $d \le 6,$ the regular permutohedron $\Pi_d$ is BV-$\alpha$-positive. 
Therefore, all the integral generalized permutohedra (including matroid base polytopes) of dimension at most $6$ are Ehrhart positive. 
\end{thm}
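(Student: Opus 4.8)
The plan is to prove the first assertion --- that $\Pi_d$ is BV-$\alpha$-positive for $d \le 6$, equivalently that Conjecture \ref{conj:alphas} holds in these dimensions --- by a finite computation, and then to deduce the second assertion essentially for free. Indeed, the normal fan of $\Pi_d$ with respect to $V = \R^{d+1}$ is exactly the Braid fan $\Br_d$, so once $\Pi_d$ is known to be BV-$\alpha$-positive, Corollary \ref{cor:reduction-gen} applied to the projective fan $\Sigma = \Br_d$ shows that every polytope $Q \in \Poly(\Br_d)$ is BV-$\alpha$-positive, and hence every \emph{integral} such $Q$ is Ehrhart positive by Lemma \ref{lem:red1}. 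By Proposition \ref{prop:coarser}, $\Poly(\Br_d)$ is precisely the family of generalized permutohedra in $\R^{d+1}$, and since every matroid base polytope is a generalized permutohedron \cite{ardilaBD}, the statement about matroid base polytopes of dimension at most $6$ follows as a special case. So the theorem reduces to showing that \emph{all} BV-$\alpha$-values of $\Pi_d$ are positive whenever $d \le 6$.

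To make that verification finite, I would first cut down the number of $\alpha$-values that actually need to be computed. Since $\alpha(F,\Pi_d) = \Psi([\fcone^p(F,\Pi_d)])$ depends only on the pointed feasible cone of $\Pi_d$ at $F$, and since $\Psi$ is invariant under orthogonal unimodular transformations by property (P3), the value $\alpha(F,\Pi_d)$ is constant on the orbits of the faces of $\Pi_d$ under the group generated by the coordinate permutations $\fS_{d+1}$ and the central symmetry $\x \mapsto -\x$. Faces of $\Pi_d$ correspond to ordered set partitions of $[d+1]$, and these orbits are indexed by compositions of $d+1$ up to reversal, so there are at most $2^d$ distinct values --- a manageable number for each $d \le 6$. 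Moreover, since $\Pi_d$ is a simple polytope, every pointed feasible cone $\fcone^p(F,\Pi_d)$ is a simplicial cone, and in fact a unimodular one: for a face $F$ of type $(c_1,\dots,c_k)$ the cone has dimension $k-1$, lives in the quotient $\lin(\Pi_d)/\lin(F)$ (whose quotient lattice is again a type-$A$ root lattice), and its rays are the images of the $k-1$ ``boundary'' simple roots $\be_a - \be_b$ at a vertex of $F$, which form a basis of that quotient lattice. Thus for each composition $(c_1,\dots,c_k)$ of $d+1$ I would write the corresponding unimodular simplicial cone down explicitly, noting that the inner product induced on the quotient is a \emph{weighted} one (with weights $c_1,\dots,c_k$), so that a priori the value depends on the whole composition and not merely on $k$.

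The last step is to evaluate $\Psi$ on each of these finitely many cones using Berline--Vergne's explicit, algorithmic construction \cite{berline-vergne}, and to check that every value produced is positive. The main obstacle is precisely this evaluation: the Berline--Vergne valuation is defined through an intricate recursion on dimension, and while it is effectively computable, the cost of running it --- and the size of the intermediate expressions --- grows rapidly with the ambient dimension, which is why the argument stops being practical beyond $d = 6$. A secondary, more routine chore is to determine the combinatorics of the feasible cones of $\Pi_d$ carefully enough that the inputs to the $\Psi$-algorithm are correct. Granting these computations, positivity of every computed value establishes BV-$\alpha$-positivity of $\Pi_d$ for $d \le 6$, and the rest of the theorem follows from the reduction in the first paragraph.
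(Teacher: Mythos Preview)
Your proposal is correct and matches the paper's approach: the paper states the result as obtained ``by explicit computation'' of the BV-$\alpha$-values of $\Pi_d$ for small $d$, and then invokes exactly the reduction via Corollary~\ref{cor:reduction-gen} (with $\Sigma=\Br_d$), Proposition~\ref{prop:coarser}, and Lemma~\ref{lem:red1} that you spell out. Your additional details---exploiting the $\fS_{d+1}$-symmetry and property (P3) to cut the number of cones down to one per composition of $d+1$ up to reversal, and noting that the resulting cones are unimodular---are a reasonable elaboration of how that finite computation is organized, but the underlying strategy is the same.
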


Next, instead of focusing on all the coefficients of Ehrhart polynomials, we study certain special coefficients. 
Note that the first, second and last Ehrhart coefficients are always positive, so we only consider other Ehrhart coefficients. Correspondingly, we need to know how to compute the BV-construction $\Psi(C)$ of cones $C$ of dimension $2, 3, \dots, d-1.$
The computation of the function $\Psi$ is carried out recursively. Hence, it is quicker to compute $\Psi$ for lower dimensional cones. 
As a result, the value of $\alpha(F,P)$ is easier to compute if $F$ is a higher dimensional face.

In general, the computation of $\Psi(C)$ is quite complicated. However, when $C$ is a unimodular cone 
computations are greatly simplified. 
In dimensions $2$ and $3$, with the help of Maple code provided by Berline and Vergne, simple closed expression for $\Psi$ of unimodular cones can be obtained \cite[Lemmas 3.9 and 3.10]{BValpha}. Applying these formulas to $\Pi_d,$ we obtain the second partial result towards Conjectures \ref{conj:alphas} and \ref{conj:posgp}:

\begin{thm}[Castillo-L.]\label{thm:34coeff}
  For any $d,$ and any face $F$ of $\Pi_d$ of codimension $2$ or $3,$ we have $\alpha(F, \Pi_d)$ is positive, where $\alpha$ is the BV-$\alpha$-valuation.

Hence, the third and fourth Ehrhart coefficients of any integral generalized permutohedron (including matroid base polytopes) are positive. 
\end{thm}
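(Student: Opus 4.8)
The plan is to compute the Berline--Vergne value $\alpha(F,\Pi_d)=\Psi([\fcone^p(F,\Pi_d)])$ directly, exploiting that the feasible cones relevant to codimensions $2$ and $3$ are low-dimensional and unimodular. Since $\Pi_d$ is a simple polytope whose normal fan is the Braid fan $\Br_d$, every cone of $\Br_d$ is a unimodular simplicial cone: modulo the lineality line spanned by the all-ones vector, its rays form part of a lattice basis of $\Z^{d+1}/\Z\mathbf 1$. By the polarity identity $\ncone(F,P)^\circ=\fcone^p(F,P)$ (Lemma~2.4 of \cite{BValpha}), the pointed feasible cone $\fcone^p(F,\Pi_d)$ at a face $F$ of codimension $j$ is therefore a $j$-dimensional unimodular simplicial cone, generated modulo $\lin(F)$ by the $j$ edge directions at a vertex of $F$ that do not lie along $F$ --- each such direction being a root $\be_a-\be_b$. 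Organizing the codimension-$2$ and codimension-$3$ faces $F$ by the block sizes of the ordered set partition of $[d+1]$ attached to $F$, and using that $\Psi$ is invariant under coordinate permutations and sign changes (property~(P3)), one reduces to computing $\Psi$ on a controlled collection of $2$- and $3$-dimensional unimodular cones, valid uniformly in $d$.

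Next I would invoke the explicit closed forms for $\Psi$ on unimodular cones in dimensions $2$ and $3$ obtained by Berline and Vergne (Lemmas~3.9 and~3.10 of \cite{BValpha}). Substituting each cone from the collection above into these formulas produces $\alpha(F,\Pi_d)$ as an explicit expression, which one then checks is positive. In dimension $2$ this should amount to a finite verification; in dimension $3$ the values occur in a family indexed by the block-size data, so positivity has to be established uniformly across that family.

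For the statement about generalized permutohedra: by Proposition~\ref{prop:coarser} the generalized permutohedra in $\R^{d+1}$ are precisely the members of $\Poly(\Br_d)$, and $\Pi_d$ has normal fan $\Br_d$. Hence Corollary~\ref{cor:reduction-gen} propagates $\alpha$-positivity for codimension-$2$ and codimension-$3$ faces from $\Pi_d$ to every integral generalized permutohedron $Q$, and \eqref{equ:coeff} converts this into positivity of the third and fourth Ehrhart coefficients of $Q$. Since matroid base polytopes are generalized permutohedra, they are covered too.

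The main obstacle is the uniform positivity check in the codimension-$3$ case: the Berline--Vergne formula for $\Psi$ of a $3$-dimensional unimodular cone is substantially more involved than in dimension $2$, so one must either establish that the list of cone shapes arising from the codimension-$3$ faces of $\Pi_d$ is genuinely finite (independent of $d$), or control the resulting closed-form expression over the infinite family of block-size parameters. Pinning down that enumeration, and checking that no cone shape has been overlooked, is where the real work lies.
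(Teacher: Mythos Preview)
Your proposal is correct and follows essentially the same approach as the paper: exploit that the pointed feasible cones of $\Pi_d$ at codimension-$j$ faces are $j$-dimensional unimodular cones, plug into the closed-form expressions for $\Psi$ on $2$- and $3$-dimensional unimodular cones (Lemmas~3.9 and~3.10 of \cite{BValpha}), verify positivity, and then invoke the Reduction Theorem (via Corollary~\ref{cor:reduction-gen}) to pass from $\Pi_d$ to all integral generalized permutohedra. The obstacle you flag in the codimension-$3$ case---whether the relevant cone shapes form a finite list or require a uniform argument over block-size parameters---is exactly the substantive work carried out in \cite{BValpha}; the present paper, being a survey, does not reproduce those details.
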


Finally, the last partial result presented in \cite{BValpha} is the following:
\begin{lem}[Castillo-L.] \label{lem:edge}
  For any $d \le 500,$ and any edge $E$ of $\Pi_d$,  we have $\alpha(E, \Pi_d)$ is positive, where $\alpha$ is the BV-$\alpha$-valuation.

Hence, the linear Ehrhart coefficient of any integral generalized permutohedron (including matroid base polytopes) of dimension at most $500$ is positive. 
\end{lem}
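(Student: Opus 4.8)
The plan is to reduce the assertion about all integral generalized permutohedra to a finite computation on the regular permutohedra $\Pi_d$ with $d\le 500$, and then to carry out that computation with the Berline--Vergne construction.

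First I would apply the reduction machinery of \S\ref{subsec:mcmullen}. By Proposition~\ref{prop:coarser} the normal fan of every integral generalized permutohedron $Q\subseteq\R^{d+1}$ is coarsened by the Braid fan $\Br_d$, and $\Br_d$ is the normal fan of $\Pi_d$. Hence Corollary~\ref{cor:reduction-gen}, applied with $\Sigma=\Br_d$, $P=\Pi_d$, $k=1$, and with $\alpha$ the BV-$\alpha$-valuation (which satisfies (P1), (P2), (P4)), shows that it is enough to prove that $\Pi_d$ is BV-$\alpha$-positive for edges, i.e. $\alpha(E,\Pi_d)=\Psi([\fcone^p(E,\Pi_d)])>0$ for every edge $E$ of $\Pi_d$; positivity of the linear Ehrhart coefficient of each integral $Q$ of dimension at most $500$ then follows from Lemma~\ref{lem:red1} and \eqref{equ:coeff}. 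This trades infinitely many polytopes for one polytope in each dimension $d\le 500$.

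Next I would use the symmetry property (P3) to cut the problem down to finitely many cone evaluations. Since the Braid fan is preserved by permuting coordinates and by $\x\mapsto-\x$, and since $\fcone^p(E,\Pi_d)=\ncone_V(E,\Pi_d)^\circ$, the value $\Psi([\fcone^p(E,\Pi_d)])$ depends only on the orbit of the normal cone under that group. An edge of $\Pi_d$ corresponds to a codimension-$1$ cone of $\Br_d$, i.e. to an ordered partition of $[d+1]$ with one block of size $2$ and the rest singletons; two such cones lie in the same orbit precisely when the position $p$ of the size-$2$ block agrees up to the reversal $p\leftrightarrow d+1-p$, giving $\lceil d/2\rceil$ edge-orbits in dimension $d$. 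A convenient representative is
\[
\ncone_V(E,\Pi_d)=\Bigl\{\,\x\in\R^{d+1}:\ \textstyle\sum_i x_i=0,\ x_1\le\cdots\le x_p=x_{p+1}\le\cdots\le x_{d+1}\,\Bigr\},
\]
whose polar $\fcone^p(E,\Pi_d)$ is a $(d-1)$-dimensional pointed simplicial cone --- in fact a unimodular one, since its rays are spanned by images of the simple roots $\be_i-\be_{i+1}$. Summing $\lceil d/2\rceil$ over $d\le 500$ leaves on the order of $6\times 10^4$ unimodular cones to treat.

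Finally I would compute $\Psi$ on each of these cones by running the Berline--Vergne recursive construction, for which code is available; as already exploited for Theorem~\ref{thm:34coeff}, the recursion simplifies considerably for unimodular cones, and one can speed it up further by noting that the representative above almost splits as a product over the coordinate blocks $\{1,\dots,p\}$ and $\{p+1,\dots,d+1\}$, coupled only through the quotient by the edge direction $\be_p-\be_{p+1}$. After computing all $\lceil d/2\rceil$ values for each $d\le 500$, one checks that every one of them is strictly positive, which gives BV-$\alpha$-positivity for edges of $\Pi_d$ in these dimensions and hence the stated conclusion. The hard part is not any individual step but the structural gap behind it: $\Psi$ of a unimodular simplicial cone in dimension $n$ has no known closed form for general $n$ --- only a recursion whose cost grows with $n$ --- so the argument is intrinsically a bounded-range verification, and the bound $500$ is simply where the computation was stopped. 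Upgrading to all $d$ (the edge case of Conjecture~\ref{conj:alphas}) would require a handle on the asymptotics of these $\Psi$-values, such as a generating function or a sign-preserving recursion, which is exactly what is missing.
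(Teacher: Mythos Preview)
Your reduction to $\Pi_d$ via Corollary~\ref{cor:reduction-gen} and your use of the symmetry (P3) to cut down to $\lceil d/2\rceil$ edge-orbits are correct and match the paper exactly. The divergence is in how you propose to obtain the $\lceil d/2\rceil$ numbers $\alpha(E,\Pi_d)$ themselves.

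You propose to evaluate $\Psi$ directly on each $(d-1)$-dimensional pointed feasible cone by running the Berline--Vergne recursion. The paper explicitly rejects this route: it notes that computing the $\Psi$-value of a $(d-1)$-dimensional cone via the BV algorithm is ``extremely difficult for large $d$,'' and that is precisely why a different strategy is used. The unimodularity simplification you invoke was what made the codimension-$2$ and codimension-$3$ computations of Theorem~\ref{thm:34coeff} tractable, but it does not tame the recursion in codimension $d-1$; the cost still blows up, and $d=500$ would not be reachable this way.

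What the paper does instead is turn \eqref{equ:coeff} around. For any integral generalized permutohedron $P$, the identity $\sum_{E}\alpha(E,P)\,\nvol(E)=[t^1]\,i(P,t)$ is a \emph{linear} equation in the unknown edge-$\alpha$'s of $\Pi_d$, because every such $\alpha(E,P)$ is a sum of some subset of those $\lceil d/2\rceil$ numbers. Choosing $\lceil d/2\rceil$ type-$\cY$ generalized permutohedra whose linear Ehrhart coefficients are known explicitly from Postnikov's formula (Theorem~\ref{thm:typey}) produces a $\lceil d/2\rceil\times\lceil d/2\rceil$ linear system; with the right choice it is triangular, so solving it is cheap. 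One then checks positivity of the solutions for each $d\le 500$. This bypasses the BV recursion entirely for the edge case --- the $\alpha$-values are recovered from Ehrhart data rather than computed from cones --- and that is what makes the bound $500$ attainable.
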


As we have discussed above, in order to compute the BV-$\alpha$-values for an edge of a $d$-dimensional polytope, we have to compute the $\Psi$-value of a $(d-1)$-dimensional cone, which is extremely difficult for large $d$ if we use Berline-Vergne's algorithm directly. 
Therefore, we use a completely different strategy. 
Recall Property (P3) of the BV-construction, which says that $\Psi$ is symmetric about coordinates. Note that the regular permutohedron $\Pi_d$ is a polytope with much symmetry. So a lot of BV-$\alpha$-values of $\Pi_d$ coincide. In particular, we can separate edges of $\Pi_d$ into to $\left\lceil \frac{d}{2} \right\rceil$ groups, where edges in each group share the same BV-$\alpha$-values.

\begin{proof}[Idea of Proof for Lemma \ref{lem:edge}]
	If we know the $\alpha$-values for a give polytope $P,$ Equation \eqref{equ:coeff} gives us a way to compute the Ehrhart coefficients. However, we can also use \eqref{equ:coeff} in the other direction: Suppose we know the linear coefficient of $i(P,t),$ Equation \eqref{equ:coeff} gives us an equation for $\alpha$-values arising from edges of $P:$
	\[ \sum_{E: \text{edge of $P$}} \alpha(E, P) \nvol(E) = [t^1] i(P,t).\]

The $\alpha$-values for the regular permutohedron also appear in other generalized permutohedra as all of them are in the family $\Poly(\Br_d).$ Therefore, if we can find $\left\lceil \frac{d}{2} \right\rceil$ ``independent'' generalized permutohedra for which we know their linear Ehrhart coefficients, then we can set up a $\left\lceil \frac{d}{2} \right\rceil \times \left\lceil \frac{d}{2} \right\rceil$ linear system for the $\left\lceil \frac{d}{2} \right\rceil$ $\alpha$-values arising from edges of $\Pi_d.$ Solving the system, we obtain all these $\alpha$-values.
See \cite[Example 3.15]{BValpha} for an example of how we can solve a linear system to find $\alpha$'s.

Recall that Postnikov gives explicit formulas for the Ehrhart polynomials of type-$\cY$ generalized permutohedra (see Theorem \ref{thm:typey}). Among all the non-trivial Ehrhart coefficients, the linear terms can be easily described. 
Using these, we were able to set up, for each $d,$ a desired linear system which is actually triangular. 
Solving the system for $d \le 500,$ we confirmed positivity of all $\left\lceil \frac{d}{2} \right\rceil$ $\alpha$'s arising from edges of $\Pi_d.$ 
\end{proof}




\subsection*{Equivalence Statements} In addition to the partial results discussed above, two equivalent statements to Conjecture \ref{conj:alphas} were discovered. The first states that Conjecture \ref{conj:alphas} holds if and only if the mixed lattice point valuation on hypersimplices is positive \cite[Corollary 5.6]{BValpha}.

The second equivalent statement is in terms of Todd classes. The BV-construction gives one way to write the Todd class of the permutohedral variety in terms of the toric invariant cycles. We can show that if there is \emph{any} way of writing such class as a positive combination of such cycles, then the BV-$\alpha$-valuation is one of them. (See \cite[Proposition 7.2]{bvpos-gp} or \cite{castillo-thesis}.)

\section{Negative Results} \label{sec:negative}

\commentout{In this section, we will discuss examples and constructions of polytopes with negative middle Ehrhart coefficients. 
In particular, as consequences of examples studied in \S \ref{subsec:smooth} and \S \ref{subsec:stanleyex}, we provide in \S \ref{subsec:nonpositive} a list of families of polytopes, in which each family gives a negative answer to Question \ref{ques:positive}.
On the other hand, constructions given in \S \ref{subsec:possible} were motivated by a refinement of Question \ref{ques:positive}, considering all possible sign patterns of the middle Ehrhart coefficients. Partial progress on the study of this question is included in \S \ref{subsec:possible}. Finally, 
}

In this section, we will discuss examples and constructions of polytopes with negative Ehrhart coefficients. 
We start in \S \ref{subsec:reeve} with the well-known Reeve tetrahedra, a family of $3$-dimensional polytopes with negative linear coefficients. 
Constructions given in \S \ref{subsec:possible} were motivated by a refinement of Question \ref{ques:positive}, considering all possible sign patterns of Ehrhart coefficients. 
Examples studied in \S \ref{subsec:smooth} and \S \ref{subsec:stanleyex} provide negative answers to Question \ref{ques:positive} for different families of polytopes (such as smooth polytopes and order polytopes), which will be summarized in \S \ref{subsec:nonpositive}. Finally in \S \ref{subsec:minkowski}, we give negative examples addressing the question of whether Minkowski summation preserves Ehrhart positivity.

As mentioned before, due to the fact that the first, second, and last Ehrhart coefficients are always positive, given a $d$-dimensional polytope $P,$ we need to ask the positivity question only for the coefficients of $t^{d-2}, t^{d-3},\dots, t^1$ in $i(P,t)$. We call these coefficients the \emph{middle Ehrhart coefficients} of $P.$

\subsection{Reeve tetrahedra} \label{subsec:reeve}
	For $d \le 2,$ there are no middle Ehrhart coefficients. Hence, possible examples with negative Ehrhart coefficients can appear only in dimension $3$ or higher. The first example comes in dimension $3:$ The \emph{Reeve tetrahedron} $T_m$ is the polytope with vertices $(0,0,0), (1,0,0), (0,1,0)$ and $(1,1,m)$, where $m$ is a positive integer. Its Ehrhart polynomial is
	\[ i(T_m,t)= \frac{m}{6} t^3+ t^2 + \frac{12-m}{6} t +1.\]
One sees that the linear coefficient is $0$ when $m=12$ and is \emph{negative} when ${m \ge 13}.$

\subsection{Possible sign patterns} \label{subsec:possible}
Motivated by the example of Reeve tetrahedra, Hibi, Higashitani, Tsuchiya and Yoshida study possible sign patterns of middle Ehrhart coefficients, and ask the following question:
\begin{ques}[Question 3.1 of \cite{HibHigTsuYos}] \label{ques:pattern}
	Given a positive integer $d \ge 3$ and integers $1 \le i_1 < \cdots < i_q \le d-2$, does there exist a $d$-dimensional integral polytope $P$ such that the coefficients of $t^{i_1}, \dots, t^{i_q}$ of $i(P,t)$ are negative, and the remaining coefficients are positive?
\end{ques}

The following is the main result in \cite{HibHigTsuYos} providing a partial answer to Question \ref{ques:pattern}.
\begin{thm}[Hibi-Higashitani-Tsuchiya-Yoshida] \label{thm:pattern}
Let $d \ge 3.$ The following statements are true.
\begin{enumerate}[(a)]
	\item \label{item:all} There exists an integral polytope $P$ of dimension $d$ such that all of its middle Ehrhart coefficients are negative.
	\item For each $1 \le k \le d-2,$ there exists an integral polytope $P$ of dimension $d$ such that $[t^k]i(P,t)$ is negative and all the remaining Ehrhart coefficients are positive.
\end{enumerate}
\end{thm}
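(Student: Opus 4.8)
The plan is to produce, for each target dimension $d$ and each target index $k \in \{1, \dots, d-2\}$, an explicit integral polytope whose Ehrhart polynomial we can compute in closed form and whose sign pattern we can control. The natural building blocks are the Reeve tetrahedra $T_m$ from \S\ref{subsec:reeve}, whose Ehrhart polynomial $i(T_m,t) = \frac{m}{6}t^3 + t^2 + \frac{12-m}{6}t + 1$ already exhibits one negative middle coefficient for $m \ge 13$, together with low-dimensional ingredients with known, very positive Ehrhart polynomials (unit cubes, standard simplices, standard reflexive simplices), which we can combine via free sums or Cartesian products. The key algebraic fact I would exploit is how Ehrhart polynomials behave under the \emph{free sum} (direct sum) $P \oplus Q$ of polytopes containing the origin in their interiors: there is a product-type formula for $h^*$-polynomials, $h^*_{P \oplus Q}(z) = h^*_P(z) \cdot h^*_Q(z)$, which then translates via \eqref{equ:h2e} into a controllable operation on Ehrhart polynomials. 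Taking a free sum with a standard reflexive simplex $\Delta_{(1,\1)}$, whose $h^*$-polynomial is $1 + z + \cdots + z^{e}$ (Example \ref{ex:refsimplex}), is a clean way to raise dimension while ``shifting'' a prescribed coefficient into a prescribed slot.

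The first step would be to establish a ``shift lemma'': starting from a $3$-polytope $T$ with $i(T,t) = c_3 t^3 + c_2 t^2 + c_1 t + 1$ where $c_1 < 0$ and $c_2, c_3 > 0$ (a Reeve tetrahedron with $m$ large), and forming $T \oplus \Delta_{(1,\1)}^{(e)}$ — the free sum with an $e$-dimensional reflexive simplex — compute the resulting Ehrhart polynomial in dimension $d = 3+e$ and show its sign pattern. Because free summing with $\Delta_{(1,\1)}$ multiplies $h^*$ by $1 + z + \cdots + z^e$, and because the Reeve tetrahedron's $h^*$-vector is easy to write down, this is a finite convolution computation; the outcome is a polynomial whose negative coefficient has moved to a higher index. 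By choosing $e$ appropriately and possibly iterating or also free-summing on the ``other side'' with another reflexive simplex to pad below, one can steer the single negative coefficient to land exactly at index $k$. This handles part (b) once we check that all other coefficients stay strictly positive — which is where the bulk of the estimation work lies, since convolution with an all-ones vector can in principle drag down neighboring coefficients.

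For part (a) — a $d$-polytope with \emph{all} middle coefficients negative — I would instead take high powers or free sums of Reeve tetrahedra with very large parameter $m$, or more efficiently a single ``iterated Reeve-type'' construction: a simplex-like polytope in $\R^d$ depending on a large integer parameter whose volume term dominates so strongly that, after subtracting it off, the remaining lower-order terms are governed by a polytope with tiny facet and face volumes, forcing all intermediate coefficients negative. Concretely, one can try $P = $ the pyramid-of-pyramids construction or a lattice simplex with vertices $0, \be_1, \dots, \be_{d-1}, (1,\dots,1,m)$ and compute its Ehrhart polynomial via the weak-composition / lattice-point count directly; the coefficients will be polynomials in $m$, and one shows the coefficient of $t^k$ for $1 \le k \le d-2$ has negative leading $m$-term. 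I expect the main obstacle to be precisely this last verification: controlling the \emph{signs of all middle coefficients simultaneously} as functions of the parameter, rather than just one of them — the volume and facet terms are forced positive, but ensuring every intermediate Ehrhart coefficient is negative for large $m$ requires a genuine asymptotic analysis of the lattice-point count (or of the relevant Todd-class / $h^*$ data), and pinning down the exact threshold on $m$ and $d$ is the delicate part. The free-sum bookkeeping for part (b) is more routine by comparison, essentially a generating-function identity plus positivity estimates on binomial convolutions.
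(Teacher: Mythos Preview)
Your instinct to start from a Reeve tetrahedron $T_m$ and raise dimension with an auxiliary polytope is exactly right, and you even list Cartesian products as an option before committing to free sums. The paper takes the Cartesian-product route, which is dramatically simpler: since $i(P\times Q,t)=i(P,t)\,i(Q,t)$, one just sets
\[
P_m^{(d)}:=T_m\times\underbrace{[0,d-3]\times\cdots\times[0,d-3]}_{d-3},
\qquad
i(P_m^{(d)},t)=\bigl((d-3)t+1\bigr)^{d-3}\!\left(\tfrac{m}{6}t^3+t^2+\tfrac{12-m}{6}t+1\right),
\]
and reads the coefficients off directly. For part~\eqref{item:all} the term $-\tfrac{m}{6}t$ in the Reeve factor, multiplied against the positive polynomial $((d-3)t+1)^{d-3}$, makes every middle coefficient a polynomial in $m$ with negative leading term, so all of them are negative once $m$ is large. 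No $h^*$-conversion, no convolution estimates, no asymptotics of a $d$-dimensional simplex are needed; the analysis is a few lines. Part~(b) (for which the paper only cites \cite{HibHigTsuYos}) is handled in the same spirit.

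Your free-sum plan has a genuine technical snag beyond being more laborious. The multiplicativity $h^*_{P\oplus Q}(z)=h^*_P(z)\,h^*_Q(z)$ (Braun's formula) requires, in its standard form, one summand to be reflexive and the other to contain the origin in its \emph{interior}. But $T_m$ has \emph{no} interior lattice points for any $m\ge 1$ (indeed $h^*_{T_m}(z)=1+(m-1)z^2$, so $h^*_3=0$), hence it cannot be translated to meet that hypothesis. You would need a more delicate free-sum identity or a different base polytope, and either way you then still have to push the resulting $h^*$-vector back through \eqref{equ:h2e} and control signs of binomial convolutions. None of that is necessary if you use the Cartesian product instead. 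Likewise, the proposed $d$-dimensional ``generalized Reeve'' simplex with apex $(1,\dots,1,m)$ would require computing its Ehrhart polynomial from scratch, whereas the product construction reuses the $3$-dimensional computation you already have.
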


The proof of both parts of the theorem is by construction. We will briefly discuss the construction for Theorem \ref{thm:pattern}/\eqref{item:all}, and refer interested readers to the original paper \cite{HibHigTsuYos} for the other construction.

\begin{proof}[Sketch of proof for Theorem \ref{thm:pattern}/\eqref{item:all}]
	Let $L_n :=[0,n],$ which is a $1$-dimensional polytope and its Ehrhart polynomial is $i(L_n,t) = nt +1.$ Define the polytope $P_m^{(d)}$ be the direct product of $(d-3)$ copies of $L_{d-3}$ and one copy of the Reeve tetrahedron $T_m.$ Then $P_m^{(d)}$ is a $d$-dimensional polytope with Ehrhart polynomial
	\[ i\left( P_m^{(d)}, t \right) = i(L_{d-3},t)^{d-3} \cdot i(T_m,t) = \left( (d-3)t +1 \right)^{d-3} \cdot \left( \frac{m}{6} t^3+ t^2 + \frac{12-m}{6} t +1 \right).\]
	The coefficients of the Ehrhart polynomial of $P_m^{(d)}$ can be explicitly described, from which one can show that all middle Ehrhart coefficients are negative for sufficiently large $m$.
\end{proof}

In addition to Theorem \ref{thm:pattern}, Hibi et al also show that answer to Question \ref{ques:pattern} is affirmative for $d \le 6$ \cite[Proposition 3.2]{HibHigTsuYos}.
Note that for $d \le 6,$ there are at most $3$ middle Ehrhart coefficients. 
Later, Tsuchiya (private communication) improved their result showing that any sign pattern with at most $3$ negatives is possible for the middle Ehrhart coefficients. Unfortunately, it is not currently clear how to extend the techniques used to prove this result to attack the question of whether any sign pattern with $4$ negatives can occur. So Question \ref{ques:pattern} is still wide open. 

\subsection{Smooth polytopes} \label{subsec:smooth}
A $d$-dimensional integral polytope $P$ is called \emph{smooth} (or \emph{Delzant}) if each vertex is contained in precisely $d$ edges, and the primitive edge directions form a lattice basis of $\Z^d$.
In \cite[Question 7.1]{bruns}, Bruns asked whether all smooth integral polytopes are Ehrhart positive. In \cite{alpha-cube}, Castillo, Nill, Paffenholz, and the author show the answer is false by presenting counterexamples in dimensions $3$ and higher. The main ideas we used was chiseling cubes and searching for negative BV-$\alpha$-values. 

\begin{figure}[h]
\begin{center}
\begin{tikzpicture}[scale=.7]

\draw (9,0)--(8,1)--(8,2)--(9,3)--(10,3)--(11,2)--(11,1)--(10,0)--cycle;
\draw (0,0)--(3,0)--(3,3)--(0,3)--cycle;

\node at (5.5,1.5){$\xrightarrow[\text{at distance $1$}]{\text{chisel $4$ vertices}}$};

\end{tikzpicture}
\end{center}
\caption{From $3\square_2$ to $Q_2(3,1)$.}
\label{fig:Qie}
\end{figure}

The first set of examples we construct is as follows: For positive integers $a > 2b,$ we let $Q_d(a,b)$ be the polytope obtained by chiseling \emph{all} vertices of $a \square_d$ at distance $b.$ (See Figure \ref{fig:Qie} for an example.) Using inclusion-exclusion and the fact that the BV-$\alpha$-values of cubes and standard simplices can be obtained easily due to property (P3), we obtain explicit formulas for all BV-$\alpha$-values arising from $P_d(a,b),$ which we use to search for negative BV-$\alpha$-values. The first negative values appear at $d=7$, suggesting that we might have a negative Ehrhart coefficient in $Q_7(a,b)$. By direct computation, we are able to show that for some choices of $(a,b)$, e.g., $(5,2),$ the polytope $Q_d(a,b)$ has a negative linear Ehrhart coefficient for any $d \ge 7.$ Therefore, we have the following result \cite[Proposition 1.3]{alpha-cube}:
\begin{prop}[Castillo-L.-Nill-Paffenholz]\label{prop:Q}
	Let $\cN_d$ be the normal fan of $Q_d(a,b).$ 
For $d \le 6$, any $d$-dimensional smooth integral polytope with normal fan $\cN_d$ is Ehrhart positive. For any $d \ge 7$, there exists a $d$-dimensional smooth integral polytope with normal fan $\cN_d$ whose linear Ehrhart coefficient is negative.
\end{prop}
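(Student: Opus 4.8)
The plan is to prove the two halves of the statement separately, both via McMullen's formula. I would first record the local geometry of $Q_d(a,b)$. For a vertex $v$ of $a\square_d$ with coordinate pattern $\epsilon\in\{0,a\}^d$, chiseling $v$ at distance $b$ is the intersection with the halfspace $\bigl\{\x:\sum_i|x_i-\epsilon_i|\ge b\bigr\}$, and the region it removes is the open corner simplex $R_v=\bigl\{\x\in a\square_d:\sum_i|x_i-\epsilon_i|<b\bigr\}$, a copy of $b\cdot\conv\{\0,\be_1,\dots,\be_d\}$ with one facet removed. Reading edge directions off this description, at every vertex of $Q_d(a,b)$ the edge directions lie among $\pm\be_i$ and $\be_i-\be_j$ and form a lattice basis, so $Q_d(a,b)$ is smooth; moreover, since $a>2b$ the pieces $R_v$ are pairwise disjoint and the combinatorial type together with the list of facet normals (the $2d$ vectors $\pm\be_i$ and the $2^d$ sign vectors in $\{\pm1\}^d$) does not depend on $(a,b)$. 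Thus the normal fan $\cN_d$ is well defined, is unimodular, and every integral polytope with normal fan $\cN_d$ is automatically smooth.

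For the negative statement, fix $d\ge 7$. Since $t\,Q_d(a,b)=Q_d(ta,tb)$ and $ta>2tb$, the $2^d$ removed corner simplices remain pairwise disjoint after dilation, so inclusion--exclusion together with the substitution $y_i=|x_i-t\epsilon_i|$ gives
\[
 i\bigl(Q_d(a,b),t\bigr)=(ta+1)^d-2^d\binom{tb+d-1}{d}.
\]
Reading off the linear coefficient yields $[t^1]\,i(Q_d(a,b),t)=da-\tfrac{2^d b}{d}$. For $(a,b)=(5,2)$ this equals $5d-\tfrac{2^{d+1}}{d}$, which is negative exactly when $5d^2<2^{d+1}$; that inequality holds at $d=7$ (since $245<256$) and hence for every $d\ge 7$, because $2^{d+1}/d^2$ is increasing in $d$ and already exceeds $5$ at $d=7$. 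Therefore $Q_d(5,2)$ is a smooth integral polytope with normal fan $\cN_d$ whose linear Ehrhart coefficient is negative.

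For the positive statement, fix $d\le 6$. By \eqref{equ:coeff}, for any integral polytope $P$ with normal fan $\cN_d$ and any $k$ one has $[t^k]\,i(P,t)=\sum_F\alpha(F,P)\,\nvol(F)$ over the $k$-faces $F$, where $\alpha(F,P)=\Psi([\fcone^p(F,P)])$ depends only on the normal cone of $P$ at $F$, hence only on $\cN_d$, while $\nvol(F)>0$. So Ehrhart positivity of the whole deformation family $\{P:\Sigma_V(P)=\cN_d\}$ (not merely of the $Q_d(a,b)$) follows once every BV-$\alpha$-value attached to a cone of $\cN_d$ is shown positive. Since $Q_d(a,b)$ is smooth, each pointed feasible cone $\fcone^p(F,Q_d(a,b))$ is a unimodular simplicial cone, and by the coordinate symmetry of $\Psi$ (property (P3)) there are, for each $d$, only finitely many of them up to signed permutation: the cones spanned by unimodular subsets of $\{\pm\be_i\}\cup\{\be_i-\be_j\}$ arising from the cube corners, the chiseled corners, and the faces straddling both regions. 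Evaluating $\Psi$ on this finite list with the Berline--Vergne recursion --- using property (P1) to decompose indicator functions by inclusion--exclusion and property (P2) to discard cones that contain a line --- one finds every such value positive for $d=1,\dots,6$, which by the displayed identity gives the first assertion.

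The main obstacle is precisely this last computation. For each fixed $d\le 6$ it is a finite check, but it requires enumerating all unimodular cone types in $\cN_d$ and carrying the recursive Berline--Vergne evaluation of $\Psi$ through dimension $6$, where the recursion is genuinely heavy; and the fact that all these $\alpha$-values come out positive --- with the first negative value occurring exactly at $d=7$, which is what makes the threshold sharp --- is not something one can foresee without running the calculation.
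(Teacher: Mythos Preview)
Your argument is essentially correct and follows the same architecture as the paper. The $d\ge 7$ half is exactly what the paper does: compute $i(Q_d(a,b),t)=(ta+1)^d-2^d\binom{tb+d-1}{d}$ by lattice-point inclusion--exclusion, read off the linear coefficient $da-2^db/d$, and plug in $(a,b)=(5,2)$.

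For the $d\le 6$ half, you also reduce to BV-$\alpha$-positivity, which is the paper's reduction. The difference is in how the $\alpha$-values are actually obtained. You propose running the Berline--Vergne recursion on the finite list of cone types and correctly flag this as heavy in dimension~$6$. The paper sidesteps the recursion entirely: because $[Q_d(a,b)]=[a\square_d]-\sum_v[R_v]$ as indicator functions and the $R_v$ are (up to signed coordinate permutation) standard simplices, property~(P1) lets one write every $\alpha(F,Q_d(a,b))$ as a signed combination of $\alpha$-values of faces of the cube and of the standard simplex. Those two polytopes are so symmetric that property~(P3) forces all their $\alpha$-values to be determined by the known Ehrhart coefficients via~\eqref{equ:coeff} --- no recursion needed. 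This yields closed formulas for the $\alpha$-values of $Q_d(a,b)$ in all dimensions, from which positivity for $d\le 6$ and the first negative value at $d=7$ are read off directly. Your plan would work, but the paper's shortcut is what makes the computation tractable and explains why the threshold is exactly~$7$.
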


\begin{rem} \label{rem:typeB}
	The polytope $Q_d(a,b)$ is not only a smooth polytope, but also a ``type-$B$ generalized permutohedron''. The generalized permutohedra considered in Section \ref{sec:mcmullen} are of type $A$ as the corresponding normal fan, $Br_d$, is constructed from the type $A$ root system. As a consequence, a polytope $P$ is a (type-$A$) generalized permutohedron if and only if each edge direction of $P$ is of the form of $\be_i -\be_j$ for some $i \neq j$. 
	Similarly, we can define a polytope $P$ in $\R^{d}$ is a \emph{type-$B$ generalized permutohedron} if each edge direction of $P$ is in the form of $\be_i\pm\be_j$ for some $i \neq j$ or of the form $\pm \be_i$ for some $i.$ It is then straightforward to verify that $Q_d(a,b)$ is a type-$B$ generalized permutohedron. 
\end{rem}

Using the idea of iterated chiseling cubes, we then improve the dimension range of our counterexamples from $d \ge 7$ to $d \ge 3.$ (See \cite[Section 2]{alpha-cube} for details.)
\begin{thm}[Castillo-L.-Nill-Paffenholz] \label{thm:smooth}
For any $d \ge 3$, there exists a $d$-dimensional smooth integral polytope $P$ such that all of its middle Ehrhart coefficients are negative.  
\end{thm}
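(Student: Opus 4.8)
The plan is to build the counterexamples by iterating the chiseling construction from Proposition~\ref{prop:Q}, pushing the dimension down from $d \ge 7$ to $d \ge 3$ while arranging that \emph{all} middle Ehrhart coefficients (not just the linear one) become negative. The starting point is the observation that $Q_d(a,b)$ is a type-$B$ generalized permutohedron (Remark~\ref{rem:typeB}), and more importantly that its normal fan is a coarsening of a fixed fan obtained from the cube; by Corollary~\ref{cor:reduction-gen} applied to the BV-construction (which satisfies (P1), (P2), (P4)), the sign of each Ehrhart coefficient of a polytope in $\Poly(\cN_d)$ is controlled by the BV-$\alpha$-values of a single reference polytope. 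So the first step is to set up, for each $d$, a ``master'' polytope --- a suitably chiseled cube --- whose $\alpha$-values we can compute in closed form via inclusion-exclusion, using that cubes and standard simplices have explicitly computable BV-$\alpha$-values thanks to the symmetry property (P3).

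First I would take a smooth polytope $P_0$ of dimension $7$ (say $Q_7(5,2)$) which is known to have a negative linear Ehrhart coefficient, and form the direct product $P_0 \times \square_{d-7}$ for $d \ge 7$; direct products of smooth polytopes are smooth, and $i(P_0 \times \square_{d-7}, t) = i(P_0,t)\,(t+1)^{d-7}$, which still has a negative coefficient (multiplying by a polynomial with positive coefficients that has a negative coefficient somewhere cannot kill all the sign changes, and one checks the relevant coefficient stays negative). This handles large $d$ with a negative coefficient, but not \emph{all} middle coefficients negative and not small $d$. For that, the key idea is \emph{iterated} chiseling: starting from $a\square_d$, chisel the vertices, then chisel the new vertices created, and so on, tracking the $\alpha$-values at each stage. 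Each chiseling step replaces a vertex cone by a union of cones of the vertices of a small simplex; by the valuation property the $\alpha$-value of the old vertex is the sum of the $\alpha$-values of the new ones, and by choosing the chiseling depths as free parameters we get a family of polytopes whose Ehrhart coefficients are explicit polynomial functions of these parameters. The final step is to show that the parameters can be tuned --- letting the innermost scale be large compared to the chiseling depths, as in the proof sketch of Theorem~\ref{thm:pattern}(a) --- so that every middle coefficient is dominated by a negative term. This is essentially a limiting/asymptotic argument in the chiseling parameters, analogous to the ``sufficiently large $m$'' step for $P_m^{(d)}$.

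The hard part will be the bookkeeping for the iterated chiseling in low dimensions: one must verify that the chiseled cube remains smooth at every stage (the primitive edge directions at each new vertex must stay a lattice basis, which constrains the admissible chiseling depths), and that the resulting formula for the middle Ehrhart coefficients genuinely has a negative leading term in the relevant parameter for \emph{each} $k$ simultaneously, rather than one $k$ at a time. In particular $d=3,4,5$ are delicate because there are very few chiseling steps available and very few middle coefficients, so the flexibility that makes the large-$d$ asymptotics work is nearly absent; one likely has to exhibit these small cases by explicit Ehrhart polynomial computation (e.g.\ direct products of a Reeve-type tetrahedron with chiseled lower-dimensional pieces) rather than by the general asymptotic argument. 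Once the small cases are pinned down and the smoothness of the iterated chiseling family is established, combining with the direct-product trick for $d \ge 7$ and the explicit middle-range cases closes the gap and yields the statement for all $d \ge 3$.
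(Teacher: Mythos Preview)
Your core approach---iterated chiseling of cubes with tunable depth parameters, tracking BV-$\alpha$-values via inclusion-exclusion and the valuation properties, then pushing a parameter to infinity so that a single dominant negative term forces every middle coefficient negative---is exactly what the paper uses (it refers the reader to \cite[Section~2]{alpha-cube} for the details but names the method as ``iterated chiseling cubes''). So the backbone of your plan matches.

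There is, however, a concrete error in your fallback for the small dimensions. You suggest handling $d=3,4,5$ by ``direct products of a Reeve-type tetrahedron with chiseled lower-dimensional pieces.'' This cannot work: the Reeve tetrahedron $T_m$ is \emph{not} smooth for $m>1$ (the primitive edge directions at the apex $(1,1,m)$ are $(1,1,m)$, $(0,1,m)$, $(1,0,m)$, and these do not form a lattice basis of $\Z^3$), and a direct product with a non-smooth factor is never smooth. So Reeve tetrahedra are unavailable here, and this is precisely why the problem is nontrivial in low dimensions---one really must stay inside the iterated-chiseling family (which is smooth at every stage by construction) and squeeze the negativity out of those parameters alone, even for $d=3$. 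Your direct-product trick $P_0\times\square_{d-7}$ is also a dead end for the stated theorem, as you yourself note: it preserves smoothness but does not force \emph{all} middle coefficients negative, so it contributes nothing to the actual argument and should be dropped. In short, the iterated chiseling has to do all the work uniformly in $d\ge 3$; there is no cheap shortcut for the small cases.
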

Note that the above theorem is a stronger version than part \eqref{item:all} of Theorem \ref{thm:pattern}.
Even though the original purpose of the paper \cite{alpha-cube} was to answer Bruns' question, 
in the process of searching for a counterexample, we obtained a separate result answering a different question. For positive integers $a >b,$ we let $P_d(a,b)$ be the polytope obtained by chiseling \emph{one} vertex off $a \square_d$ at distance $b.$ It is clear that $P_d(a,b)$ share the same BV-$\alpha$-values with $Q_d(a,b).$ Hence, it has negative BV-$\alpha$-values at $d \ge 7.$ However, it turns out any $d$-dimensional integral polytope $P$ that has the same normal fan as $P_d(a,b)$ is Ehrhart positive \cite[Lemma 3.9]{alpha-cube}. 
\begin{cor}[Castillo-L.-Nill-Paffenholz]
	For $d \ge 7,$ there exists a smooth projective fan $\Sigma$, such that its associated BV-$\alpha$-values contains negative values, but any smooth integral polytope in $\Poly(\Sigma)$ is Ehrhart positive. 

	Therefore, BV-$\alpha$-positivity is strictly stronger than Ehrhart-positivity.
\end{cor}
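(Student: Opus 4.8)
The plan is to take $\Sigma$ to be the normal fan of $P_d(a,b)$ for $d\ge 7$ and a suitable pair of positive integers $a>b$, and to verify the two halves of the statement separately. For the first half, note that every pointed feasible cone of $P_d(a,b)$ also occurs as a pointed feasible cone of $Q_d(a,b)$ --- chiseling one corner of $a\square_d$ is locally indistinguishable from chiseling all of them --- and since a BV-$\alpha$-value of a face depends only on its pointed feasible cone, $P_d(a,b)$ and $Q_d(a,b)$ have the same BV-$\alpha$-values. By Proposition \ref{prop:Q} and the search carried out in \cite{alpha-cube}, for $d\ge 7$ and an appropriate choice such as $(a,b)=(5,2)$ there is a face whose BV-$\alpha$-value is negative; the cone realizing it is of ``chisel type'', hence is a cone of $\Sigma$, so $\Sigma$ carries a negative associated BV-$\alpha$-value.

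For the second half I would classify the smooth integral polytopes in $\Poly(\Sigma)$ explicitly. The rays of $\Sigma$ are $\pm\be_1,\dots,\pm\be_d$ together with the outer normal $-(\be_1+\cdots+\be_d)$ of the chisel facet, so after a lattice translation putting the chiseled corner at the origin, a smooth integral polytope with normal fan equal to $\Sigma$ has the form
\[ P \;=\; \left\{ \x\in\R^d \ :\ 0\le x_i\le a_i \text{ for all } i\in[d], \ \textstyle\sum_{i=1}^d x_i \ge b \right\}, \]
with $a_1,\dots,a_d,b\in\P$ and $b<a_i$ for all $i$; smoothness imposes no further condition and integrality forces the $a_i$ and $b$ to be integers. (The only strict coarsening of $\Sigma$ that stays a complete polytopal fan erases the chisel ray, giving the normal fan of the box $\prod_i[0,a_i]$, which is Ehrhart positive, so these degenerate members of $\Poly(\Sigma)$ are harmless.) For $P$ as above, $tP$ is the box $\prod_i[0,ta_i]$ with the corner simplex $\{\x\ge\0:\sum_i x_i\le tb\}$ removed except along the facet $\sum_i x_i=tb$, so inclusion--exclusion on lattice points gives
\[ i(P,t) \;=\; \prod_{i=1}^d (a_i t+1) \;-\; \binom{bt+d-1}{d}. \]
The first term is a product of linear polynomials with positive coefficients, and $[t^k]\prod_i(a_it+1)$ is the $k$-th elementary symmetric polynomial $e_k(a_1,\dots,a_d)\ge\binom{d}{k}b^k$ since every $a_i>b$. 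The second term equals $\tfrac1{d!}\prod_{j=0}^{d-1}(bt+j)$, whose coefficient of $t^k$ is $\tfrac{c(d,k)}{d!}\,b^k$ with $c(d,k)$ the unsigned Stirling number of the first kind; using $c(d,0)=0$, $c(d,d)=1$, and $c(d,k)<d!$ for $1\le k<d$, one gets $[t^k]i(P,t)\ge\big(\binom{d}{k}-\tfrac{c(d,k)}{d!}\big)b^k>0$ for all $0\le k\le d$. Hence $P$ is Ehrhart positive.

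Combining the two halves, $\Sigma$ is a smooth projective fan carrying a negative BV-$\alpha$-value, yet every smooth integral polytope in $\Poly(\Sigma)$ is Ehrhart positive; in particular $P_d(a,b)$ itself is Ehrhart positive but not BV-$\alpha$-positive. Since BV-$\alpha$-positivity always implies Ehrhart positivity by Lemma \ref{lem:red1} (applied to the BV-$\alpha$-valuation, which solves McMullen's formula by property (P4)), this exhibits the implication as strict, which is the assertion of the corollary.

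The main obstacle is the first half: that $Q_d(a,b)$, equivalently $P_d(a,b)$, genuinely \emph{has} a negative BV-$\alpha$-value for $d\ge 7$ is not a formal matter but rests on explicitly computing BV-$\alpha$-values of chiseled cubes --- via inclusion--exclusion, the coordinate symmetry (P3), and the Berline--Vergne algorithm applied to cones of cubes and simplices --- followed by a search, as carried out in \cite{alpha-cube}. Once that computational input and the classification of the deformation cone of $\Sigma$ are granted, everything else is routine: the second half needs only the closed form for $i(P,t)$ and the elementary inequality $c(d,k)<d!$.
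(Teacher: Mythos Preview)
Your approach matches the paper's: take $\Sigma$ to be the normal fan of $P_d(a,b)$, observe that $P_d(a,b)$ and $Q_d(a,b)$ share BV-$\alpha$-values (so $\Sigma$ inherits negative $\alpha$-values for $d\ge 7$ from the $Q_d$ computation), and verify Ehrhart positivity directly. The paper defers that last step to \cite[Lemma~3.9]{alpha-cube}; your explicit formula $i(P,t)=\prod_i(a_it+1)-\binom{bt+d-1}{d}$ together with the bound $c(d,k)<d!$ is essentially what that lemma contains, and your argument for it is correct.

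One inaccuracy: the parenthetical claim that the only strict polytopal coarsening of $\Sigma$ is the box fan is false. Already for $d=2$, letting $a_1\to b$ collapses the edge lying on $x_2=0$, removes the ray $-\be_2$, and produces a smooth quadrilateral whose normal fan is neither $\Sigma$ nor the box fan; analogous degenerations with some $a_i=b$ exist in every dimension and give further smooth members of $\Poly(\Sigma)$. This does not actually break your proof, however: your lattice-point count remains valid verbatim whenever $a_i\ge b\ge 0$ (the constraints $x_i\le ta_i$ still do not bind on the removed corner simplex), and the inequality $e_k(a_1,\dots,a_d)\ge\binom{d}{k}b^k$ holds for $a_i\ge b$, so positivity survives for these degenerations as well. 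You should either extend your description of $\Poly(\Sigma)$ accordingly, or---since the paper itself only invokes the same-normal-fan case and the ``Therefore'' clause needs just the single polytope $P_d(5,2)$---simply drop the parenthetical.
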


Finally, 
we studied a weaker version of Brun's question by requiring the smooth polytopes to be reflexive. More precisely, we asked whether all smooth reflexive polytopes have positive Ehrhart coefficients. Unfortunately, the answer to this question is still negative.

In fixed dimension $d,$ there are only finitely many reflexive polytopes up to unimodular transformations. Because of their correspondence to toric Fano manifolds, smooth reflexive polytopes were completely classified up to dimension $9$ \cite{Oebro, LorPaf-Database}. 
We used \texttt{polymake} \cite{Polymake} to verify that up to dimension $8$ all of them are Ehrhart positive, but in dimension $9$ the following counterexample came up \cite{alpha-cube}: 
\begin{ex}\label{ex:smoothreflexive}
Let $P$ be the polytope in $\R^9$ defined by
\[ \begin{pmatrix}
\scriptstyle
1 & 0 & 0 & 0 & 0 & 0 & 0 & 0 & 0 \\
0 & 1 & 0 & 0 & 0 & 0 & 0 & 0 & 0 \\
0 & 0 & 1 & 0 & 0 & 0 & 0 & 0 & 0 \\
0 & 0 & 0 & 1 & 0 & 0 & 0 & 0 & 0 \\
0 & 0 & 0 & 0 & 1 & 0 & 0 & 0 & 0 \\
0 & 0 & 0 & 0 & 0 & 1 & 0 & 0 & 0 \\
0 & 0 & 0 & 0 & 0 & 0 & 1 & 0 & 0 \\
0 & 0 & 0 & 0 & 0 & 0 & 0 & 1 & 0 \\
0 & 0 & 0 & 0 & 0 & 0 & 0 & 0 & 1 \\
0 & 0 & 0 & 0 & 0 & 0 & 0 & 0 & -1 \\
-1 & -1 & -1 & -1 & 0 & 0 & 0 & 0 & 4 \\
0 & 0 & 0 & 0 & -1 & -1 & -1 & -1 & -4 
	\end{pmatrix} 
	\begin{pmatrix} x_1 \\ x_2 \\ x_3 \\ x_4 \\ x_5 \\ x_6 \\ x_7 \\ x_8 \\ x_9 \end{pmatrix}
	\le 
	\begin{pmatrix} 1 \\ 1 \\ 1 \\ 1 \\ 1 \\ 1 \\ 1 \\ 1 \\ 1 \end{pmatrix}
\]
Using \texttt{polymake} \cite{Polymake}, one can check that this polytope is smooth and reflexive with Ehrhart polynomial 
\begin{align*}
i(P,t) =& \ 12477727/18144 t^9 + 12477727/4032 t^8 + 9074291/1512 t^7 +  630095/96 t^6 \\
& \ + 19058687/4320 t^5 +  117857/64 t^4 + 3838711/9072 t^3 + 11915/1008 t^2 \\
& \ -6673/630  t + 1,
\end{align*}
which has a negative linear coefficient.
\end{ex}

\subsection{Stanley's example} \label{subsec:stanleyex}
In answering an Ehrhart positivity question posted on mathoverflow, Stanley gave the following example \cite{stanley-mathoverflow2015}:
\begin{ex}\label{ex:stanleyex}
	Let $\cQ_k$ be the poset with one minimal element covered by $k$ other elements. The Ehrhart polynomial of the order polytope $\cO(\cQ_k)$ is 
	\[ i(\cO(\cQ_k), t) = \sum_{i=1}^{t+1} i^k.\]
One can compute that
\[ [t^1] i(\cO(\cQ_{20}), t) = -168011/330 < 0.\]
Hence, the linear Ehrhart coefficient of $\cO(\cQ_k)$ is negative when $k=20.$ 
\end{ex}

Based on Stanley's example, the author and Tsuchiya studied the Ehrhart positivity question on polytopes $\cO(\cQ_k)$ for any $k$, and gave a complete description of which Ehrhart coefficients of $\cO(\cQ_k)$ are negative \cite{stanley-order}. The following theorem is an immediate consequence to this description.
\begin{thm}[L.-Tsuchiya]
	The order polytope $\cO(\cQ_k)$ (defined in Example \ref{ex:stanleyex}) is Ehrhart positive if and only if $k \le 19.$
\end{thm}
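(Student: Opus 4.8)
\emph{Plan of proof.} The idea is to extract a closed formula for the coefficients of $i(\cO(\cQ_k),t)$ in terms of Bernoulli numbers and then read off their signs. Write $p_k(t):=i(\cO(\cQ_k),t)=\sum_{i=1}^{t+1}i^k$, a polynomial in $t$ of degree $k+1$. Starting from Faulhaber's formula $\sum_{i=0}^{n-1}i^k=\frac{1}{k+1}\bigl(B_{k+1}(n)-B_{k+1}\bigr)$ --- where $B_m(x)=\sum_{j=0}^m\binom{m}{j}B_jx^{m-j}$ is the Bernoulli polynomial, $B_j$ the Bernoulli number, and $B_1=-\tfrac12$ --- and substituting $n=t+2$, I would rewrite $p_k(t)=\frac{1}{k+1}\bigl(B_{k+1}(t+2)-B_{k+1}\bigr)$ for $k\ge 1$.

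The first main step is to compute $[t^m]p_k(t)$. Expanding $B_{k+1}(t+2)$ in powers of $t$, using the identity $\binom{k+1}{j}\binom{k+1-j}{m}=\binom{k+1}{m}\binom{k+1-m}{j}$, and recognizing $\sum_{j}\binom{N}{j}B_j2^{N-j}=B_N(2)=B_N(1)+N$, one obtains for $1\le m\le k-1$
\[
[t^m]\,p_k(t)=\binom{k}{m}\left(1+\frac{B_{k+1-m}}{k+1-m}\right),
\]
while $[t^{k+1}]p_k(t)=\tfrac{1}{k+1}$, $[t^k]p_k(t)=\tfrac32$, and $[t^0]p_k(t)=1$ are manifestly positive. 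Since $B_\ell=0$ for every odd $\ell\ge3$, the coefficient $[t^m]p_k(t)$ simply equals $\binom{k}{m}>0$ whenever $k+1-m$ is odd and $\ge 3$; hence the only coefficients that can fail to be positive are those for which $\ell:=k+1-m$ is a positive even integer, and for such an $\ell$ positivity of $[t^{k+1-\ell}]p_k(t)$ is equivalent to the elementary inequality $\ell+B_\ell>0$.

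The second main step is to settle $\ell+B_\ell>0$ for even $\ell$. Recalling that $\operatorname{sgn}(B_{2n})=(-1)^{n+1}$, so that $B_\ell>0$ for $\ell\equiv2\pmod 4$ and $B_\ell<0$ for $\ell\equiv0\pmod 4$, the inequality is automatic unless $4\mid\ell$, in which case it reads $|B_\ell|<\ell$. The classical values $|B_4|=|B_8|=\tfrac1{30}$, $|B_{12}|=\tfrac{691}{2730}$, $|B_{16}|=\tfrac{3617}{510}\approx7.09$ all satisfy this, whereas $|B_{20}|=\tfrac{174611}{330}\approx529>20$. Consequently, if $k\le19$ then every relevant $\ell$ lies in $\{2,\dots,19\}$ and all coefficients of $p_k(t)$ are positive, so $\cO(\cQ_k)$ is Ehrhart positive; while for every $k\ge20$, taking $m=k-19\in\{1,\dots,k-1\}$ gives
\[
[t^{k-19}]\,p_k(t)=\binom{k}{19}\left(1+\frac{B_{20}}{20}\right)=\binom{k}{19}\cdot\left(-\frac{168011}{6600}\right)<0,
\]
so $\cO(\cQ_k)$ is not Ehrhart positive (and for $k=20$ this recovers the value $-168011/330$ of Example~\ref{ex:stanleyex}).

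I do not anticipate a genuine obstacle once the Bernoulli-number formula for $[t^m]p_k(t)$ is established: the rest reduces to the single inequality $|B_\ell|<\ell$ together with the finite list of values $B_2,\dots,B_{20}$ and the standard sign pattern of $B_{2n}$. The point that needs a little care is the bookkeeping in deriving the coefficient formula --- the index shift $n=t+2$, the conventions $B_1=-\tfrac12$ versus $B_1(1)=+\tfrac12$, and the separate treatment of $m\in\{0,k,k+1\}$ --- after which the argument is purely arithmetic.
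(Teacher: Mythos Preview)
Your argument is correct. The coefficient formula
\[
[t^m]\,p_k(t)=\binom{k}{m}\left(1+\frac{B_{k+1-m}}{k+1-m}\right)\qquad(1\le m\le k-1)
\]
follows cleanly from Faulhaber's formula as you outline, and the reduction to the single inequality $\ell+B_\ell>0$ for even $\ell$ is exactly the right reformulation. The finite check for $\ell\in\{4,8,12,16\}$ and the failure at $\ell=20$ finish both directions, and your exhibited negative coefficient $[t^{k-19}]p_k(t)$ for every $k\ge 20$ is valid since $k-19\in\{1,\dots,k-1\}$ in that range.

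As for comparison: the present survey does not actually give a proof of this theorem. It states the result as an immediate consequence of the ``complete description of which Ehrhart coefficients of $\cO(\cQ_k)$ are negative'' established in the cited paper \cite{stanley-order}. Your explicit Bernoulli-number formula for $[t^m]p_k(t)$ \emph{is} such a complete description, so your route is essentially the natural (and presumably the intended) one; in any case it is a self-contained and correct proof.
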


Stanley's example and its extension are very interesting as $\cO(\cQ_{k})$ belongs to a lot of different families of polytopes. First of all, it is an order polytope, and thus is a $(0,1)$-polytope. 

Recall that a Gorenstein polytope of codegree $s$ is an integral polytope such that $sP$ is reflexive. 
It follows from a result by Hibi \cite{hibi1987} that an order polytope is Gorenstein if and only if the underlying poset is \emph{pure}, i.e., all maximal chains have the same length. Clearly, $\cQ_{k}$ is pure. Thus, $\cO(\cQ_{k})$ is a Gorenstein polytope. 

Finally, 
M\'esz\'aros, Morales and Striker proved a result observed by Postnikov establishing a connection between flow polytopes of planar graphs and order polytopes \cite[Theorem 3.8]{MesMorStr}. 
Using this connection, Morales (private communication) observes that the order polytope $\cO(\cQ_{k})$ is unimodularly equivalent to the flow polytope $\cF_G(1, 0, \dots, 0, -1)$, 
where $G$ is the black graph on $k+1$ vertices in Figure \ref{fig:flow-order}. (Note the red part of the figure is $\cQ_{k}$.)
\begin{figure}[htp]
	\includegraphics[scale=0.7, trim = 0 680 100 50, clip]{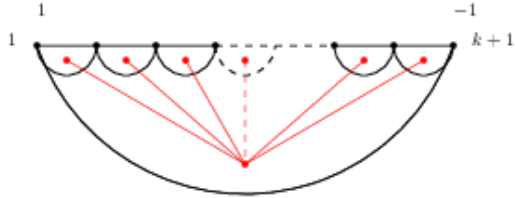}
	\caption{$\cQ_{k}$ and its corresponding planar graph $G$.}
 \label{fig:flow-order}
\end{figure}

\subsection{Non-Ehrhart-positive families}\label{subsec:nonpositive}
	For each of the families listed below, it is \emph{not} true that all the integral polytopes in the family are Ehrhart positive.
	\begin{multicols}{2}
		\begin{enumerate}[(i)]
			\item \label{item:smooth} Smooth polytopes. 
			\item \label{item:typeB} Type-$B$ generalized permutohedra.
			\item \label{item:01} $(0,1)$-polytopes. 
			\item \label{item:order} Order polytopes.
			\item \label{item:chain} Chain polytopes. 
			\item \label{item:flow} Flow polytopes.
			\item \label{item:gorenstein} Gorenstein polytopes. 
			\item \label{item:reflexive} Reflexive polytopes.
			\item \label{item:smoothreflexive} Smooth reflexive polytopes.
		\end{enumerate}
	\end{multicols}
	Furthermore, non-Ehrhart-positive examples were constructed for family \eqref{item:smooth} for each dimension $d \ge 3$, for family \eqref{item:typeB} for each dimension $d \ge 7,$ and for families \eqref{item:01}, \eqref{item:order}, \eqref{item:chain}, \eqref{item:flow}, \eqref{item:gorenstein} and \eqref{item:reflexive} for each dimension $d \ge 21.$ 
	\begin{proof}
		The conclusion for \eqref{item:smooth} and \eqref{item:typeB} follows from Theorem \ref{thm:smooth}, Proposition \ref{prop:Q} and Remark \ref{rem:typeB}.
		Notice that the order polytope $\cO(\cQ_k)$ considered in \S \ref{subsec:stanleyex} has dimension $k+1.$ Then the conclusion for \eqref{item:01}, \eqref{item:order}, \eqref{item:flow}, \eqref{item:gorenstein} follows directly from discussion in \S \ref{subsec:stanleyex}. 
		Next, \eqref{item:chain} follows from \eqref{item:order} and Remark \ref{rem:chain},	
		and \eqref{item:reflexive} follows from \eqref{item:gorenstein}, the connection between Gorenstein polytopes and reflexive polytopes and the fact that Ehrhart positivity is invariant under dilating operations. 
		Finally, \eqref{item:smoothreflexive} follows from Example \ref{ex:smoothreflexive}.
	\end{proof}

	\subsection{Minkowski sums}  \label{subsec:minkowski}
Recall that in \S \ref{subsubsec:typey}, we learned that the type-$\cY$ generalized permutohedra which are defined to be Minkowski sums of dilated standard simplices are Ehrhart positive. Noticing that standard simplices are Ehrhart positive (see \S \ref{simplex}), we asked the following question in the first version of this survey:
\begin{center}
	\emph{Is it true that if two integral polytopes $P$ and $Q$ are Ehrhart positive, \\ then their Minkowski sum $P + Q$ is Ehrhart positive?}
\end{center}
Tsuchiya (private communication) constructed a few examples, which gave a negative answer to the above question, shortly after it was posted. Below are two of his examples. 
\begin{ex}\label{ex:mink1}
Let $P$ be the $3$-dimensional simplex with vertices 
\[(0,0,0,0), (1,0,0,0), (0,1,0,0), (0,0,1,0),\] 
and $Q$ the $1$-dimensional polytope with vertices 
\[ (0,0,0,0), (1, 19, 19, 20).\]
It is easy to see that both $P$ and $Q$ are Ehrhart positive.
However, one can check that $P+Q$ is a $4$-dimensional polytope 
with Ehrhart polynomial
\[ i(P+Q,t) = 10/3t^4 + 7/6 t^3 - 1/3 t^2 + 17/6 t +1,\]
which has a negative quadratic coefficient.
\end{ex}
\begin{ex}\label{ex:mink2}
Let $P$ be the $5$-dimensional simplex with vertices 

\[(0,0,0,0,0), (1,0,0,0,0), (0,1,5,15,16), (0,0,1,0,0), (0,0,0,1,0), (0,0,0,0,1),\] 
and $Q$ the $5$-dimensional simplex with vertices 
\[(0,0,0,0,0), (1,0,0,0,0), (1,1,15,15,16), (0,0,1,0,0), (0,1,0,0,0), (0,0,0,0,1).\]
Since $P$ is unimodularly equivalent to the standard $5$-simplex, it is Ehrhart positive. Moreover, the Ehrhart polynomial of $Q$ is
\[ i(Q,t) = 1/8 t^5 + 5/12 t^4 + 17/24 t^3 + 19/12 t^2 + 13/6 t + 1,\]
which also has positive coefficients.

However, $P+Q$ is a $5$-dimensional polytope with Ehrhart polynomial  
\[ i(P+Q,t) = 3007/40 t^5 + 359/24 t^4 - 255/24 t^3 + 193/24 t^2 + 89/20 t + 1,\]
which has a negative coefficient.
\end{ex}

Notice that the polytopes given in Example \ref{ex:mink1} satisfy $\dim(P) + \dim(Q) = \dim(P+Q),$ and those in Example \ref{ex:mink2} satisfy $\dim(P)=\dim(Q)=\dim(P+Q).$
These are the two extreme situations in terms of dimensions. 
Therefore, even with some restrictions on the dimensions of $P$, $Q$ and $P+Q,$ the answer to the question above is false.

\commentout{
	In particular, the $d$-dimensional standard simplex is included. Therefore, we pose the following revised question: 
\begin{ques}
Is it true that if two integral polytopes $P$ and $Q$ are Ehrhart positive, then their Minkowski sum $P + Q$ is Ehrhart positive as long as either $\dim(P)$ or $\dim(Q)$ is equal to $\dim(P+Q)$?
\end{ques}

The answer to the above question, if true, would provide another explanation for the Ehrhart positivity of type-$\cY$ generalized permutohedra, as well as give a new way of attacking Ehrhart positivity problems. 
}
	
\section{Further discussion} \label{sec:future}

\subsection{Ehrhart positivity conjectures}

We list several families of polytopes that are conjectured to be Ehrhart positive.

\subsubsection{Base-$r$ simplices} Recall the definition of $\Delta_{(1, \bq)}$ given in \S \ref{subsubsec:delta1q}. For any positive integer $r \in \P,$ we let $\bq_r := (r-1, (r-1)r, (r-1)r^2, \dots, (r-1)r^{d-1}) \in \P^d,$ and then define the \emph{base-$r$ $d$-simplex} to be
\[ \cB_{(r,d)} := \Delta_{(1,\bq_r)}.\]
Note that if $r=1,$ we obtain a polytope that is unimodularly equivalent to the standard $d$-simplex. The family of base-$r$ $d$-simplices are introduced by Solus in his study of simplices for numeral systems \cite{solus}, in which he shows that the $h^*$-polynomial of $\cB_{(r,d)}$ is real-rooted and thus is unimodal. Based on computational evidence, Solus makes the following conjecture \cite[Section 5]{solus}:
\begin{conj}[Solus]
The base-$r$ $d$-simplex is Ehrhart positive. 
\end{conj}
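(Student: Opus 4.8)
The plan is to combine the Braun--Davis--Solus formula for $h^*$ (Theorem~\ref{thm:comph}) with the tools of \S\ref{subsec:explicit} and \S\ref{subsubsec:cross}. For $\cB_{(r,d)}=\Delta_{(1,\bq_r)}$ one has $1+q_1+\cdots+q_d=r^d$, and, using the classical identity $\sum_{m\ge1}\lfloor n/r^m\rfloor=(n-s_r(n))/(r-1)$ for the base-$r$ digit sum $s_r$, the exponent in Theorem~\ref{thm:comph} simplifies to $\omega(b)=s_r((r-1)b)/(r-1)$. Written this way, $\omega(b)$ is an Eulerian-type statistic on the base-$r$ digit string of $b$, computed by a finite automaton tracking the carry in the multiplication $b\mapsto(r-1)b$; the resulting transfer-matrix recursion for $h^*_{\cB_{(r,d)}}(z)=\sum_{b=0}^{r^d-1}z^{\omega(b)}$ underlies Solus's proof that this polynomial is real-rooted. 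In the base-$2$ case everything collapses: $h^*_{\cB_{(2,d)}}(z)=(1+z)^d$, so $\cB_{(2,d)}$ is reflexive and $h^*$-unit-circle-rooted, and Ehrhart positivity follows immediately from Theorem~\ref{thm:root} and Lemma~\ref{lem:neg} (equivalently, Corollary~\ref{cor:gorenstein}), exactly as for the cross-polytope.

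For $r\ge3$ this shortcut vanishes: already $h^*_{\cB_{(3,2)}}(z)=1+5z+3z^2$ has roots off the unit circle, so $\cB_{(r,d)}$ is in general neither Gorenstein nor $h^*$-unit-circle-rooted and Theorem~\ref{thm:root} does not apply, while real-rootedness of $h^*$ by itself carries no implication for Ehrhart positivity. My main approach would be to prove directly that $i(\cB_{(r,d)},t)$ is \emph{NRPR} and then invoke Lemma~\ref{lem:neg}. The small cases are encouraging: $i(\cB_{(3,2)},t)=\tfrac92t^2+\tfrac52t+1$ and $i(\cB_{(3,3)},t)=\tfrac92t^3+\tfrac72t^2+4t+1$ both pass the Routh--Hurwitz test. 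Concretely, I would carry the automaton recursion through the transform $h^*\mapsto i(\,\cdot\,,t)$: refine $h^*_{\cB_{(r,d)}}(z)$ into the $r$ ``partial'' polynomials indexed by the automaton state after the top digit, divide by $(1-z)^{d+1}$ to obtain the corresponding partial Ehrhart polynomials, show this family is compatible in the sense of interlacing with all of its roots in the open left half-plane, and close the induction on $d$ so that $i(\cB_{(r,d)},t)$, a positive combination of them, inherits NRPR.

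A second, more combinatorial route is to seek an explicit ``positive'' formula in the spirit of \S\ref{subsubsec:PitSta} and \S\ref{subsubsec:flow}. Since $i(\cB_{(r,d)},t)=\sum_{b=0}^{r^d-1}\binom{t-\omega(b)+d}{d}$ and
\[
\binom{t-\omega+d}{d}=\frac{1}{d!}\,\bigl[t(t-1)\cdots(t-\omega+1)\bigr]\bigl[(t+1)(t+2)\cdots(t+d-\omega)\bigr],
\]
the only obstruction to Lemma~\ref{lem:product} is the falling-factorial block, which acquires negative coefficients once $\omega\ge2$. The goal would be a sign-reversing involution on $\{0,1,\dots,r-1\}^d$ cancelling the bad contributions and regrouping the survivors into products of linear polynomials with positive coefficients, just as the $G$-draconian and Kostant-partition-function expansions do for type-$\cY$ generalized permutohedra and flow polytopes. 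One might also exploit that $\cB_{(r,d)}$ is the polytope of the (fake) weighted projective space $\P\bigl(1,\,r-1,\,(r-1)r,\dots,(r-1)r^{d-1}\bigr)$, whose weights form a geometric progression, which might render the Berline--Vergne or Pommersheim--Thomas Todd class, hence BV-$\alpha$-positivity of $\cB_{(r,d)}$, computable. (The Reduction Theorem of \S\ref{subsubsec:bvred} offers no simplification here; and the higher-integrality method of \S\ref{subsec:higher} does not apply either, since for odd $r$ the linear coefficient of $i(\cB_{(r,2)},t)$ is not an integer, so $\cB_{(r,2)}$ cannot be $1$-integral even up to unimodular transformation.)

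The hard part, common to all of these, is the passage from the well-understood combinatorics of $\omega$ --- equivalently, from the real-rooted $h^*$-polynomial --- to the Ehrhart coefficients: the linear map $z^j\mapsto\binom{t+d-j}{d}$ does not preserve positivity, NRPR of $i(\cB_{(r,d)},t)$ is strictly stronger than real-rootedness of $h^*_{\cB_{(r,d)}}(z)$ (the Ehrhart polynomial genuinely has complex roots), and one must control all $d$ roots of $i(\cB_{(r,d)},t)$ uniformly in both $r$ and $d$. The automaton recursion is the natural handle, but making the interlacing bookkeeping across the $r$ states and across dimensions actually close is, I expect, where the real difficulty lies.
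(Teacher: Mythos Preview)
This statement is listed in the paper as an \emph{open conjecture}; the paper offers no proof. Its only commentary is the remark immediately following the conjecture: since $\cB_{(r,d)}$ is (for $r\ge 3$) not reflexive, Corollary~\ref{cor:gorenstein} shows its $h^*$-polynomial is not unit-circle-rooted, and hence the method used for Payne's simplices cannot work. Your proposal is consistent with this: you too recognize that Theorem~\ref{thm:root} fails for $r\ge 3$, and you do not claim to have a proof---what you have written is a research plan, not an argument.

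What you have that goes beyond the paper is correct and worth noting. Your computation $\omega(b)=s_r((r-1)b)/(r-1)$ is valid, and your observation that $h^*_{\cB_{(2,d)}}(z)=(1+z)^d$ settles the case $r=2$ via Theorem~\ref{thm:root} and Lemma~\ref{lem:neg} is a genuine (if easy) result the paper does not state; in fact the paper's blanket assertion that ``$\cB_{(r,d)}$ is not reflexive'' is literally false at $r=2$, where Conrads' criterion is satisfied. Your small checks for $r=3$ are also correct.

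But for $r\ge 3$ there is no proof here. Each of your three routes---an interlacing/Routh--Hurwitz induction through the digit automaton, a sign-reversing involution producing a Lemma~\ref{lem:product}-type expansion, and a direct BV-$\alpha$ computation for the weighted projective space---is only sketched at the level of intent, and you yourself identify the gap: carrying the recursion on $h^*$ across the transform $z^j\mapsto\binom{t+d-j}{d}$ so as to control the half-plane location of \emph{all} roots of $i(\cB_{(r,d)},t)$, uniformly in $r$ and $d$. Nothing in the proposal closes that gap, so the conjecture remains open, exactly as in the paper.
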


We remark that this family of $\Delta_{(1,\bq)}$ is very different from the ones constructed by Payne discussed in Example \ref{ex:payne}. 
If $r > 1,$ the base-$r$ simplex $\cB_{(r,d)}$ always contains the origin as an interior point, and it follows from \eqref{equ:knownh} that the degree of $h^*$-polynomial of $\cB_{(r,d)}$ is $d.$
Since $\cB_{(r,d)}$ is not reflexive, by Corollary \ref{cor:gorenstein} the roots of its $h^*$-polynomial are not all on the unit circle in the complex plane. 
Therefore, the techniques used to prove Ehrhart positivity for Payne's construction would not work here. 

\subsubsection{Birkhoff polytopes}
The {\it Birkhoff polytope} $B_n$ is the convex polytope of $n \times n$ doubly-stochastic matrices; that is, the set of real nonnegative matrices with all row and column sums equal to one. Equivalently, $B_n$ can also be defined as the convex hull of all $n \times n$ permutation matrices. (See \cite[Chapters 5 and 6]{yemelichevKK} for a detailed introduction to $B_n.$)
There has been a lot of research on computing the volumes and Ehrhart polynomials of Birkhoff polytopes \cite{beckpixton, canfield-mckay2007b, birkhoff, pak2000}. 
The following conjecture was made by Stanley in a talk \cite{stanleyehrhart1}:
\begin{conj}[Stanley]
Birkhoff polytopes are Ehrhart positive.
\end{conj}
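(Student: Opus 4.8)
The plan is to attack Stanley's conjecture through McMullen's formula and the BV-$\alpha$-valuation, exploiting the large symmetry group of $B_n$. Recall from \eqref{equ:coeff} that
\[ [t^k] i(B_n,t) = \sum_{F:\ k\text{-dimensional face of }B_n} \alpha(F, B_n)\, \nvol(F), \]
so by Lemma \ref{lem:red1} it suffices to prove $\alpha(F, B_n) > 0$ for every face $F$, where $\alpha$ is the BV-$\alpha$-valuation. The first step is to organize the faces: a face of $B_n$ is determined by a support pattern, namely a bipartite graph $G$ on $[n]\sqcup[n]$ admitting a perfect matching, and the face $F_G$ (doubly stochastic matrices vanishing off $G$) is itself a transportation polytope. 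Modulo the action of $(\fS_n\times\fS_n)\rtimes\Z_2$ (row and column permutations and transpose) there are relatively few orbits of faces in each dimension, and --- crucially --- when $G$ disconnects into components on vertex parts $(A_1,B_1),\dots,(A_k,B_k)$ with $|A_\ell|=|B_\ell|$, the face $F_G$ splits as a direct product $F_{G_1}\times\cdots\times F_{G_k}$ of smaller Birkhoff-type polytopes. Since the BV-function $\Psi$ (and hence $\alpha$) is multiplicative on direct sums of cones, positivity of $\alpha$ propagates from ``indecomposable'' faces, which suggests an induction on $n$: one would reduce to understanding $\alpha(F_G, B_n)$ for connected (``elementary'') bipartite graphs $G$. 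Note the type-$A$ and type-$B$ generalized-permutohedron machinery of Section \ref{sec:mcmullen} does not apply directly, since the edge directions of $B_n$ are rank-one tensors $(\be_a-\be_b)\otimes(\be_c-\be_d)$ rather than roots; one genuinely needs the transportation structure.

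The second step would be to carry out the cone computations where they are tractable. The feasible cone $\fcone^p(F_G,B_n)$ is again a transportation-type cone (nonnegative matrices with some row/column sum constraints forced to zero and others free), inheriting the relevant $\fS\times\fS$-symmetry. Facets and ridges give positive $\alpha$ automatically; one can then attempt to run the codimension-$2$ and codimension-$3$ analysis of Theorem \ref{thm:34coeff} for these cones --- this requires care because $B_n$ is not simple, so the relevant feasible cones need not be simplicial, but if it goes through it yields that the third and fourth Ehrhart coefficients of \emph{every} Birkhoff polytope are positive. The linear coefficient could be handled in the spirit of Lemma \ref{lem:edge}: group the edge-$\alpha$-values of $B_n$ into the finitely many $\fS\times\fS$-orbits, and set up a linear system expressing the known linear Ehrhart coefficients of enough sub-families of transportation polytopes (for which lattice-point counts are available) in terms of these orbit values, then solve it. A complementary avenue is that $B_n$ is (unimodularly equivalent to) a transportation polytope and hence a flow polytope on a complete bipartite network, so Theorem \ref{thm:flow} gives a Kostant-partition-function expression for $i(B_n,t)$ via \eqref{equ:flow}; the obstruction is that the indegree hypothesis of Corollary \ref{cor:flow} fails (indegrees are $n-1\ge 1$, so the factors $\multiset{\,\cdot\,}{\,\cdot\,}$ are not products of positive linear polynomials), and one would have to prove the resulting alternating sum is positive by a finer analysis --- a concrete but hard sub-problem in its own right.

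The hard part will be the general coefficient, which requires controlling $\Psi$ on transportation cones of unbounded dimension. The Berline--Vergne construction is defined by a recursion over faces of cones with no closed form beyond dimension $3$, so pinning down the sign of $\alpha(F,B_n)$ for faces of large codimension as $n\to\infty$ is genuinely difficult; moreover there is no a priori reason the individual cone valuations must be positive --- for smooth polytopes such BV-$\alpha$-values can be negative (Proposition \ref{prop:Q}). Thus a naive face-by-face positivity proof may well fail, and one may instead have to prove positivity of the full sum $\sum_F \alpha(F,B_n)\,\nvol(F)$ by exploiting the cancellation imposed by the $(\fS_n\times\fS_n)\rtimes\Z_2$-symmetry, or to find an alternative positivity certificate --- e.g. a dissection of $tB_n$ into lattice-point-counting pieces each contributing a polynomial with positive coefficients, in the spirit of the half-open parallelepiped decompositions used for zonotopes or the explicit formulas of \S\ref{subsec:explicit}. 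Finally, the cheap routes appear closed and should be noted explicitly: $B_n$ is a Gorenstein polytope (of codegree $n$), so Lemma \ref{lem:gorensteinroot} only pins down the \emph{sum} of the roots of $i(B_n,t)$ to be $-n(n-1)^2/2$, which is far from Ehrhart positivity; the roots of $h^*_{B_n}(z)$ do not all lie on the unit circle, so Theorem \ref{thm:root} does not apply; and $i(B_n,t)$ is not expected to be negative-real-part-rooted for larger $n$, so Lemma \ref{lem:neg} is not available either. Any complete proof must therefore use the transportation structure of $B_n$ in an essential way.
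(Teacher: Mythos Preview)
This statement is a \emph{conjecture}, not a theorem: the paper does not prove it, and neither do you. What you have written is explicitly a research program (``The plan is to attack\dots'', ``The hard part will be\dots'', ``a naive face-by-face positivity proof may well fail''), not a proof. You yourself identify the central gap: the Berline--Vergne recursion has no closed form past dimension~$3$, there is no reason individual $\alpha(F,B_n)$ must be positive (indeed Proposition~\ref{prop:Q} shows BV-$\alpha$-values can be negative even for smooth polytopes), and falling back to positivity of the full sum $\sum_F \alpha(F,B_n)\,\nvol(F)$ is just restating the conjecture. The multiplicativity of $\Psi$ on direct sums that you invoke is not established in the paper, and even granting it, the inductive reduction to ``elementary'' bipartite graphs leaves the base case --- connected faces of arbitrary codimension --- completely open.

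One concrete point where your assessment diverges from the paper: you assert that ``$i(B_n,t)$ is not expected to be negative-real-part-rooted for larger $n$, so Lemma~\ref{lem:neg} is not available either.'' The paper reports the opposite evidence: the first nine $i(B_n,t)$ \emph{do} have all roots with negative real parts, and the root patterns observed in the literature suggest that the NRPR route via Lemma~\ref{lem:neg} is in fact a promising line of attack. You are correct that $B_n$ is not $h^*$-unit-circle-rooted (so Theorem~\ref{thm:root} does not apply), but that does not rule out NRPR by other means. If you want to dismiss this approach you need to exhibit an $n$ for which NRPR fails, or give a reason to expect it to; absent that, the paper's suggested avenue remains at least as viable as the BV-$\alpha$ program you outline.
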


By checking the available data \cite{beckpixton}, the first nine $i(B_n,t)$ have the property that all the roots have negative real parts. More importantly, Figure 6 in \cite{beck-deloera-et2005} suggests that the roots of $i(B_n,t)$ form a certain pattern. 
Hence, it could be promising to use Lemma \ref{lem:neg} to attack this conjecture. 

We also remark that $B_n$ is a Gorenstein polytope (up to lattice translation) of codegree $n$. However, with aforementioned data, one can see that $B_n$ is not $h^*$-unit-circle-rooted. Hence, we cannot apply Theorem \ref{thm:root} to show that all roots of $i(B_n,t)$ have negative real parts.   

\subsubsection{Tesler polytopes}
For any $n \times n$ upper triangular matrix $M=(m_{i,j})$, the $k^{th}$ \emph{hook sum} of $M$ is the sum of all the elements on the $k^{th}$ row minus the sum of all the elements on the $k^{th}$ column excluding the diagonal entry:
\[ (m_{k,k} + m_{k,k+1} + \cdots + m_{k,n}) - (m_{1,k} + m_{2,k} + \cdots + m_{k-1,k}).\]
For each $\ba =(a_1, \dots, a_n) \in \N^n$, M\'esz\'aros, Morales, and Rhoades \cite{Tesler} define the \emph{Tesler polytope}, denoted by $\tes_n(\ba),$ to be the set of all $n\times n$ upper triangular matrices $M$ with nonnegative entries and of \emph{hook sum} $\ba$, i.e., the $k^{th}$ hook sum of $M$ is $a_k.$
The lattice points in $\tes_n(\ba)$ are exactly \emph{Tesler matrices of hook sum $\ba$}. When $\ba=(1, 1,\dots,1),$ these are important objects in Haglund's work on diagonal harmonics \cite{haglund}. Therefore, we call $\tes_n(1, 1, \dots, 1)$ the \emph{Tesler matrix polytope} as Tesler matrices of hook sum $(1, 1, \dots, 1)$ were the original Tesler matrices defined by Haglund.

Another interesting example of a Tesler polytope is $\tes_n(1, 0, \dots, 0)$, which turns out to be the \emph{Chan-Robbins-Yuen} polytope or \emph{CRY} polytope, a face of the Birkhoff polytope. The \emph{CRY} polytope, denote by $\CRY_n,$ is the convex hull of all the $n \times n$ permutation matrices $M=(m_{i,j})$ such that $m_{i,j} = 0$ if $i \ge j+2,$ i.e., all entries below the sub-diagonal are zeros.
It was initially introduced by Chan, Robbins and Yuen in \cite{chanrobbinsyuen}, in which they made an intriguing conjecture on a formula for the volume of $\CRY_n$ as a product of Catalan numbers. It was since proved by Zeilberger \cite{zeilberger}, Baldoni-Vergne \cite{BalVer}, and M\'{e}sz\'aros \cite{meszaros, meszaros-prod}.

Using the Maple code provided by Baldoni, Beck, Cochet and Vergne \cite{BalBecCocVerMaple}, Morales computed the Ehrhart polynomials of both CRY polytopes and Tesler matrix polytopes for small $n$, and made the following conjecture \cite{flowehrhartweb}.

\begin{conj}[Morales]\label{conj:flow}
For each $n,$ the CRY polytope $\CRY_n = \tes_n(1, 0, \dots, 0)$ and the Tesler matrix polytope $\tes_n(1, 1, \dots, 1)$ are both Ehrhart positive.
\end{conj}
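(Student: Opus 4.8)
The plan is to route everything through the flow-polytope machinery of \S\ref{subsubsec:flow}. It is elementary (and goes back to work of M\'esz\'aros, Morales and Rhoades) that translating the hook-sum conditions into flow-conservation equations exhibits $\tes_n(\ba)$ as unimodularly equivalent to the flow polytope $\cF_{K_{n+1}}(\bar{\ba})$ of the complete graph on $[n+1]$, where $\bar{\ba}=(a_1,\dots,a_n,-\sum_i a_i)$; in particular both $\CRY_n=\tes_n(1,0,\dots,0)$ and the Tesler matrix polytope $\tes_n(1,1,\dots,1)$ are flow polytopes. The first step is then to apply Theorem \ref{thm:flow} with $G=K_{n+1}$ and these two netflow vectors, obtaining closed expressions \eqref{equ:flow0} and \eqref{equ:flow} for $i(\CRY_n,t)$ and $i(\tes_n(1,\dots,1),t)$ as finite sums of products of (multiset) binomial coefficients weighted by values of the Kostant partition function $\KP_{K_{n+1}}$.

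The obstruction is that $K_{n+1}$ violates the indegree hypothesis of Corollary \ref{cor:flow} badly --- here $\iin_k=k-2$, so for $k\ge 3$ the factor $\multiset{a_k t-\iin_k}{j_k}$ has roots at positive values of $t$ --- and consequently the summands of \eqref{equ:flow} carry coefficients of mixed sign, with heavy cancellation. One therefore cannot invoke Lemma \ref{lem:product}, and the real task is to resum the formula into a manifestly positive shape or to locate a structural property. I would pursue two complementary routes. The first is the root-location route of Lemma \ref{lem:neg}: prove that $i(\CRY_n,t)$ and $i(\tes_n(1,\dots,1),t)$ are NRPR --- the same strategy the survey floats for the Birkhoff polytope, which is natural here since $\CRY_n$ is a face of $B_n$. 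One should first compute the codegree of each polytope and decide whether it is Gorenstein; if so, Lemma \ref{lem:gorensteinroot} (equivalently Macdonald reciprocity) already forces the roots to be symmetric about a vertical line $\Re(z)=-s/2$, and what remains --- the crux --- is to confine them to the open left half-plane. Since neither polytope is $h^*$-unit-circle-rooted, Theorem \ref{thm:root} does not apply, so the confinement would have to come from a stability or interlacing argument. That points to the second route: use the Danilov--Karzanov--Koshevoy and Postnikov--Stanley polytopal subdivisions of flow polytopes to reduce $\cF_{K_{n+1}}(\bar{\ba})$ to flow polytopes on graphs with fewer edges, and argue by induction --- with the unit cubes and standard simplices of \S\ref{subsec:explicit} as base cases --- that either coefficientwise positivity or the NRPR property is preserved under each reduction move.

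The main obstacle, under either route, is controlling this cancellation. For the root route it amounts to a half-plane confinement for a sequence of polynomials admitting no known product formula (the number of Tesler matrices of hook sum $(1,\dots,1)$ is not hypergeometric), while for the inductive route it amounts to showing that splitting a vertex in a flow-polytope subdivision never pushes a root across the imaginary axis --- a statement about how $\KP_{K_{n+1}}$ redistributes under the reduction, which at present seems out of reach in general. A reasonable partial program is to settle $\CRY_n$ first, where the extra structure (a face of $B_n$, and a normalized volume equal to a product of Catalan numbers) makes an explicit positive formula for $i(\CRY_n,t)$ plausible, and to nail down the linear and the $t^{d-2}$ coefficients of both families for all $n$ by combining Theorem \ref{thm:flow} with the known closed forms for the lowest values of $\KP_{K_{n+1}}$.
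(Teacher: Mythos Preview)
The statement you are attempting is a \emph{conjecture}, not a theorem, and the paper contains no proof of it. It appears in \S5.1, ``Ehrhart positivity conjectures,'' as an open problem attributed to Morales on the basis of numerical computation. The paper's only further remark is precisely the obstruction you identified: Tesler polytopes are unimodularly equivalent to $\cF_{K_{n+1}}(\bar{\ba})$, but $K_{n+1}$ fails the indegree hypothesis of Corollary~\ref{cor:flow}, so the Lidskii--Baldoni--Vergne formula does not yield Ehrhart positivity directly. That is all the paper says.

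Your write-up is therefore not a proof but a research outline, and you are honest about this --- you call the confinement step ``the crux'' and say the inductive reduction ``seems out of reach in general.'' That assessment is accurate and matches the state of the art reflected in the survey. A couple of points of calibration: your NRPR route leans on hoping for Gorensteinness to get root symmetry, but you should check this rather than assume it --- for $\CRY_n$ the codegree is known and the polytope is not reflexive in general, so this step needs care. Your subdivision route is reasonable in spirit, but neither Ehrhart positivity nor NRPR is known to be preserved under the Postnikov--Stanley reduction moves, and there is no evidence in the paper or the cited literature that it should be; this is a genuine gap, not a routine induction. In short, you have correctly located why the conjecture is hard and sketched plausible attack lines, but none of them currently closes, and the paper does not claim otherwise.
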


\subsection*{Connection to flow polytopes}
M\'esz\'aros et al show in \cite[Lemma 1.2]{Tesler} that for any $\ba \in \N^n,$ the Tesler polytope $\tes_n(\ba)$ is unimodularly equivalent to the flow polytope $\cF_{K_{n+1}}(\bar{\ba})$, 
where $K_{n+1}$ is the complete graph on $[n+1]$ and $\bar{\ba}$ is defined as in \eqref{equ:bara}. Therefore, Tesler polytopes are flow polytopes associated to complete graphs. Note that the complete graph does not satisfy the hypothesis of Corollary \ref{cor:flow}. So Conjecture \ref{conj:flow} does not follow.

\subsubsection{Stretched Littlewood-Richardson coefficients}
The Schur functions $s_\lambda$ form a basis for the ring of symmetric functions. (See \cite[Chaprter 7]{stanleyec2} for background on symmetric functions.) Therefore, the product of two Schur functions $s_\lambda$ and $\s_\mu$ can be uniquely expressed as
\[ s_\lambda \cdot s_\mu = \sum_{\nu: |\nu| = |\lambda| + |\mu|} c_{\lambda, \mu}^{\nu} s_\nu.\]
We call the coefficients $c_{\lambda,\mu}^{\nu}$ in the above expression the \emph{Littlewood-Richardson coefficients} or \emph{LR coefficients}. 
There are many different ways of computing $c_{\lambda,\mu}^{\nu}.$ For example, it counts the number of semistandard Young tableaux $T$ of shape $\nu/\lambda$ with content $\mu$ such that the reading word of $T$ satisfies the ``Yamanouchi word condition'' \cite{macdonald-symmetric}.
One immediate consequence of these descriptions is that the LR coefficients are nonnegative integers. In this article, we use the hive model \cite{buch, KnuTao, KnuTaoWoo} to describe the LR coefficients.

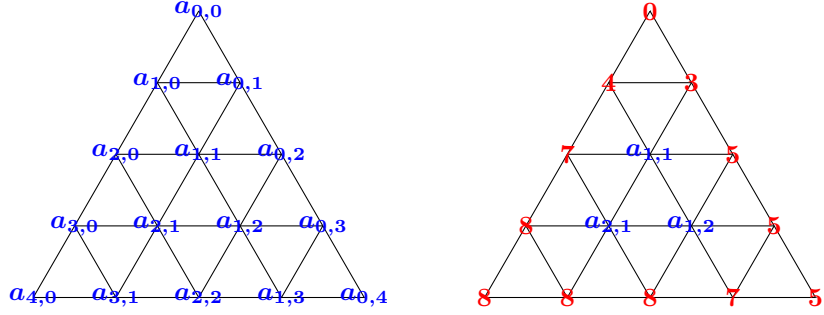
\begin{figure}
\newcommand*\rows{4}
\begin{tikzpicture}
	\begin{scope}[scale=1.1]
    \foreach \row in {0, 1, ...,\rows} {
	    \draw[thin] ($\row*(0.5, {0.5*sqrt(3)})$) -- ($(\rows,0)+\row*(-0.5, {0.5*sqrt(3)})$);
	    \draw[thin] ($\row*(1, 0)$) -- ($(\rows/2,{\rows/2*sqrt(3)})+\row*(0.5,{-0.5*sqrt(3)})$);
	    \draw[thin] ($\row*(1, 0)$) -- ($(0,0)+\row*(0.5,{0.5*sqrt(3)})$);
    }

    \node at (0, 0) {$\textcolor{blue}{\bm{a_{4,0}}}$};
    \node at (1, 0) {$\textcolor{blue}{\bm{a_{3,1}}}$};
    \node at (2, 0) {$\textcolor{blue}{\bm{a_{2,2}}}$};
    \node at (3, 0) {$\textcolor{blue}{\bm{a_{1,3}}}$};
    \node at (4, 0) {$\textcolor{blue}{\bm{a_{0,4}}}$};
    \node at (0.5, 0.866) {$\textcolor{blue}{\bm{a_{3,0}}}$};
    \node at (1.5, 0.866) {$\textcolor{blue}{\bm{a_{2,1}}}$};
    \node at (2.5, 0.866) {$\textcolor{blue}{\bm{a_{1,2}}}$};
    \node at (3.5, 0.866) {$\textcolor{blue}{\bm{a_{0,3}}}$};
    \node at (1, 1.732) {$\textcolor{blue}{\bm{a_{2,0}}}$};
    \node at (2, 1.732) {$\textcolor{blue}{\bm{a_{1,1}}}$};
    \node at (3, 1.732) {$\textcolor{blue}{\bm{a_{0,2}}}$};
    \node at (1.5, 2.598) {$\textcolor{blue}{\bm{a_{1,0}}}$};
    \node at (2.5, 2.598) {$\textcolor{blue}{\bm{a_{0,1}}}$};
    \node at (2, 3.464) {$\textcolor{blue}{\bm{a_{0,0}}}$};
    \end{scope}

    	\begin{scope}[xshift=6cm, scale=1.1]
    \foreach \row in {0, 1, ...,\rows} {
	    \draw[thin] ($\row*(0.5, {0.5*sqrt(3)})$) -- ($(\rows,0)+\row*(-0.5, {0.5*sqrt(3)})$);
	    \draw[thin] ($\row*(1, 0)$) -- ($(\rows/2,{\rows/2*sqrt(3)})+\row*(0.5,{-0.5*sqrt(3)})$);
	    \draw[thin] ($\row*(1, 0)$) -- ($(0,0)+\row*(0.5,{0.5*sqrt(3)})$);
    }

    \node at (0, 0) {$\textcolor{red}{\bm{8}}$};
    \node at (1, 0) {$\textcolor{red}{\bm{8}}$};
    \node at (2, 0) {$\textcolor{red}{\bm{8}}$};
    \node at (3, 0) {$\textcolor{red}{\bm{7}}$};
    \node at (4, 0) {$\textcolor{red}{\bm{5}}$};
    \node at (0.5, 0.866) {$\textcolor{red}{\bm{8}}$};
    \node at (1.5, 0.866) {$\textcolor{blue}{\bm{a_{2,1}}}$};
    \node at (2.5, 0.866) {$\textcolor{blue}{\bm{a_{1,2}}}$};
    \node at (3.5, 0.866) {$\textcolor{red}{\bm{5}}$};
    \node at (1, 1.732) {$\textcolor{red}{\bm{7}}$};
    \node at (2, 1.732) {$\textcolor{blue}{\bm{a_{1,1}}}$};
    \node at (3, 1.732) {$\textcolor{red}{\bm{5}}$};
    \node at (1.5, 2.598) {$\textcolor{red}{\bm{4}}$};
    \node at (2.5, 2.598) {$\textcolor{red}{\bm{3}}$};
    \node at (2, 3.464) {$\textcolor{red}{\bm{0}}$};
    \end{scope}
\end{tikzpicture}
\caption{A hive of size $4$.}
\label{fig:hive}
\end{figure}
A \emph{hive of size $n$} 
is a triangular array of numbers $a_{i,j}$ with $0 \le i, j, i+j \le n$ arranged on a triangular grid consisting of $n^2$ small equilateral triangles. See the left side of Figure \ref{fig:hive} for how a hive of size $4$ should look like.
For any adjacent triangles $\{a,b,c\}$ and $\{b,c,d\}$ in the hive, they form a rhombus $\{a,b,c,d\}.$ The \emph{hive condition} for this rhombus is 
\begin{equation}\tag{HC}
	b + c \ge a + d. \label{equ:hc}
\end{equation}
Suppose $|\nu| = |\lambda| + |\mu|$ with $l(\nu), l(\lambda), l(\mu) \le n.$ A \emph{Littlewood-Richardson-hive} or \emph{LR-hive} of \emph{type $(\nu, \lambda, \mu)$} is a hive $\{a_{i,j} \in \N \ : \ 0 \le i,j, i+j\le n\}$ with nonnegative integer entries satisfying the hive condition \eqref{equ:hc} for all of its rhombi, with border entries determined by partitions $\nu, \lambda, \mu$ in the following way: 
$a_{0,0}=0$ and for each $j=1,2,\dots,n$,
\[ a_{j,0}-a_{j-1,0} = \nu_j, \quad a_{0,j}-a_{0,j-1} = \lambda_j, \quad  a_{j,n-j} - a_{j-1,n-j+1} = \mu_k.\]
With this definition, the LR-coefficient $c_{\lambda, \mu}^\nu$ counts the number of LR-hives of type $(\nu, \lambda, \mu).$ (Note that this is independent from $n$ as long as $l(\nu), l(\lambda), l(\mu) \le n.$)

For example, if $\nu=(4,3,1)$, $\lambda=(3,2)$ and $\mu=(2,1),$ then the border of a corresponding LR-hive of size $4$ is shown on the right side of Figure \ref{fig:hive}. In fact, the hive condition will force $a_{2,1}=8$ and $a_{1,2}=7$. So it will be reduced to a hive of size $3.$ Finally, it follows from the hive condition that $6 \le a_{1,1} \le 7.$ Thus, we have two LR-hives of this type, and we conclude that $c_{(3,2),(2,1)}^{(4,3,1)}=2.$ 

From the above description, it is not hard to see that $c_{\lambda, \mu}^{\nu}$ counts the number of lattice points inside a polytope $P_{\lambda, \mu}^{\nu}$ determined by the border condition and the hive condition. Furthermore, for any positive integer $t,$ the LR-coefficient $c_{t\lambda, t\mu}^{t\nu}$ counts the number of lattice points inside the $t^{th}$ dilation of $P_{\lambda, \mu}^{\nu}:$
\[ c_{t \lambda, t\mu}^{t\nu} = | t P_{\lambda, \mu}^{\nu} \cap \Z^D|.\]

King, Tollu and Toumazet studied $c_{t \lambda, t\mu}^{t\nu}$, which they call the \emph{stretched Littlewood-Richardson coefficients}, and made the following conjecture \cite[Conjecture 3.1]{KinTolTou}:
\begin{conj}[King-Tollu-Toumazet]\label{conj:stretch}
	For all partitions $\lambda, \mu, \nu$ such that $c_{\lambda, \mu}^{\nu} >0$, there exists a polynomial $f(t) = f_{\lambda,\mu}^{\nu}(t)$ in $t$ such that $f(0)=1$ and $f(t) = c_{t \lambda, t\mu}^{t\nu}$ for all positive integers $t.$

	Furthermore, all the coefficients of $f(t)$ are positive.
\end{conj}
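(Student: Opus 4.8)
The plan is to recast Conjecture \ref{conj:stretch} as an Ehrhart-positivity statement and then bring to bear the machinery of Sections \ref{sec:examples} and \ref{sec:mcmullen}. Fix $\lambda,\mu,\nu$ with $c_{\lambda,\mu}^\nu>0$, and let $P := P_{\lambda,\mu}^{\nu}$ be the hive polytope: the solution set of the border equations together with the rhombus inequalities \eqref{equ:hc}. Since replacing the border data $(\nu,\lambda,\mu)$ by $(t\nu,t\lambda,t\mu)$ simply dilates $P$ by the factor $t$, we have $c_{t\lambda,t\mu}^{t\nu} = |tP \cap \Z^D|$, so $f(t)$ is the Ehrhart function of $P$. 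The first step is to establish that $f$ is an honest polynomial with $f(0)=1$; both facts are by now understood — polynomiality of stretched Littlewood-Richardson coefficients is a theorem of Derksen-Weyman (and others), and the normalization $f(0)=1$ then follows from Ehrhart-Macdonald reciprocity together with structural properties of the Littlewood-Richardson cone such as the Knutson-Tao saturation theorem, which guarantees that the lattice structure of the faces of $P$ behaves well under dilation. With these two inputs in hand, what remains of the conjecture is exactly: the Ehrhart polynomial of every nonempty hive polytope has positive coefficients.

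For this positivity statement I would try two complementary routes. The first is to search for a manifestly positive formula along the lines of Theorem \ref{thm:flow}: via the Berenstein-Zelevinsky realization of Littlewood-Richardson coefficients as Kostant partition function values, and the honeycomb/hive dictionary, one can express $c_{t\lambda,t\mu}^{t\nu}$ as a lattice-point count in a flow polytope. If the attached network could be chosen so that Formula \eqref{equ:flow} produces a sum of products of linear polynomials in $t$ with positive coefficients, then Lemma \ref{lem:product}/\eqref{item:sum} finishes the argument. The obstruction is precisely the one flagged after Corollary \ref{cor:flow}: the graphs that arise violate the indegree hypothesis, so one would need a sharper residue analysis (in the style of Baldoni-Vergne) showing that the resulting alternating combination of binomials nonetheless expands positively. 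The second route is to exploit the fact that, as $(\nu,\lambda,\mu)$ ranges over a fixed chamber of the Littlewood-Richardson cone, the hive polytopes $P_{\lambda,\mu}^{\nu}$ all share a single normal fan $\Sigma$; this places the family inside $\Poly(\Sigma)$, and Corollary \ref{cor:reduction-gen} would then reduce the whole conjecture (within that chamber) to $\alpha$-positivity of one generic hive polytope, after which Lemma \ref{lem:red1} yields Ehrhart positivity of all the others.

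The main obstacle, common to both routes, is that hive polytopes are generally not lattice polytopes — their vertices are rational with denominators that grow with $n$ — so McMullen's formula \eqref{equ:exterior} and the Berline-Vergne $\alpha$-construction do not apply verbatim; one would first need a version of \eqref{equ:coeff} valid for rational polytopes whose Ehrhart function happens to be polynomial, in which the weights depend not only on the normal cone of a face but also on the local lattice along that face. Even with such a framework in place, we would be left asking for $\alpha$-positivity of a specific, highly asymmetric polytope, with none of the symmetry that makes Conjecture \ref{conj:alphas} tractable; so the decisive input will have to come from the combinatorics of the rhombus inequalities themselves — for instance, a degeneration of $P_{\lambda,\mu}^{\nu}$ onto a ``lattice-face''-type polytope in the sense of \S \ref{subsubsec:kintegral}, or a subdivision of the hive polytope into cells on which lattice points can be counted in closed form, in the spirit of the proofs of Theorem \ref{thm:zonotope} and Theorem \ref{thm:cyclic}. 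Constructing such a degeneration or subdivision uniformly in $\lambda,\mu,\nu$ is where I expect the real difficulty to lie.
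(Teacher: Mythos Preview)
This statement is a \emph{conjecture}, and the paper does not attempt to prove it. Immediately after stating it, the paper records that the polynomiality assertion was established by Derksen--Weyman and by Rassart, and then says explicitly that ``the positivity assertion in the conjecture (which is still open) is exactly an Ehrhart-positivity question.'' So there is no ``paper's own proof'' to compare your proposal against.

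Your write-up is accordingly not a proof but a research plan, and you seem to recognize this: both of your two routes end in acknowledged obstructions, and your last paragraph says outright that constructing the needed degeneration or subdivision ``is where I expect the real difficulty to lie.'' That is an honest assessment of the state of affairs and is consistent with the paper's own position. A couple of specific comments on your plan: (i) your first paragraph matches the paper on polynomiality (Derksen--Weyman), though the paper does not spell out the $f(0)=1$ argument you sketch via reciprocity and saturation; (ii) your flow-polytope route runs into exactly the indegree obstruction flagged after Corollary~\ref{cor:flow}, as you note, and there is no known sharpening that rescues it; (iii) your Reduction-Theorem route is blocked not only by rationality of hive vertices but also by the fact that, even were a rational analogue of \eqref{equ:coeff} available, there is no reason to expect $\alpha$-positivity for a generic hive polytope --- nothing in the paper suggests this holds, and the negative results of Section~\ref{sec:negative} show $\alpha$-positivity can fail even in well-behaved families. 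In short: your proposal does not constitute a proof, but neither does the paper claim one; the positivity half of Conjecture~\ref{conj:stretch} is open.
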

One notices that if $P_{\lambda, \mu}^{\nu}$ is an integral polytope, then the polynomiality part of the above conjecture follows from Ehrhart's theorem. 
However, in general, $P_{\lambda, \mu}^{\nu}$ is a rational polytope, which only implies that $c_{t \lambda, t\mu}^{t\nu}$ is a quasi-polynomial with some period.
Nevertheless, the assertion of polynomiality in the above conjecture was established first by Derksen and Weyman \cite{DerWey} using semi-invariants of quivers, and then by Rassart \cite{rassart} using Steinberg's formula \cite{steinberg} and hyperplane arrangements. Hence, the polynomial asserted in Conjecture \ref{conj:stretch} can be considered to be an Ehrhart polynomial, and positivity assertion in the conjecture (which is still open) is exactly an Ehrhart-positivity question. 



\subsection{Other questions}
Many questions related to Ehrhart positivity remain open. We include a few below.

\subsubsection{Modified Bruns question}
As we have discussed in \S \ref{subsec:smooth}, the answer to Bruns' question of whether all smooth polytopes are Ehrhart positive is negative, where counterexamples are constructed for each dimension $d \ge 3.$ 
Furthermore, we verify, with the help of \texttt{polymake} \cite{Polymake}, that all smooth reflexive polytopes of dimension up to $8$ are Ehrhart positive, and that there exists a non-Ehrhart-positive smooth reflexive polytopes of dimension $9$. However, we did not investigate smooth reflexive polytopes of higher dimensions. 
Therefore, one can ask:

\begin{ques} 
	Does there exist a smooth reflexive polytope of dimension $d$ with negative Ehrhart coefficients, for any $d \ge 10$?
\label{ques:smoothreflexive}
\end{ques}

Bruns' question can be rephrased using the language of fans: For any smooth projective fan $\Sigma,$ is it true that \emph{any} polytope with normal fan $\Sigma$ is Ehrhart positive. Since the answer is false, a weaker version of this question can be asked: 
\begin{ques}
	Is it true that for any smooth projective fan $\Sigma,$ there exists \emph{one} integral polytope $P$ with normal fan $\Sigma$ that is Ehrhart positive? 
\end{ques}

\subsubsection{$h^*$-vector for $3$-dimensional polytopes}
Here instead of studying Ehrhart positivity question for families of polytopes in which dimensions vary, we focus on polytopes with a fixed dimension, and ask the following question:
\begin{ques}\label{ques:likely}
For each $d,$ how likely is an integral polytope Ehrhart positive?	
\end{ques}
Since integral polytopes of dimension at most $2$ are always Ehrhart positive, dimension $3$ is a natural starting point. 

We have mentioned in the introduction that various inequalities for $h^*$-vectors have been found. So we may use Formula \eqref{equ:h2e} which gives a connection between the $h^*$-vector and Ehrhart coefficients together with known inequalities to study Question \ref{ques:likely}. Note that in dimension $3,$ only the linear Ehrhart coefficient could be negative. Applying \eqref{equ:h2e}, we obtain that $P$ is Ehrhart positive (equivalently the linear Ehrhart coefficient of $P$ is positive) if and only if the $h^*$-vector $(h_0^*, h_1^*, h_2^*, h_3^*)$ of $P$ satisfies 
\begin{equation}
	11 h_0^* + 2 h_1^* - h_2^* + 2h_3^* > 0.
	\label{equ:ineq}
\end{equation}
In \cite{BalKas}, Balletti and Kasprzyk give classifications for $3$-dimensional polytopes with $1$ or $2$ interior lattice points, using which they extract all possible $h^*$-vectors. Assume the number of interior lattice points is fixed to be $1$ or $2.$ Applying \eqref{equ:knownh}, we obtain $h_0^*=1$ and $h_3^*= 1$ or $2$. Hence, only $h_1^*$ and $h_2^*$ change. Balletti and Kasprzyk then plot all occurring pairs of $(h_1^*, h_2^*)$ in \cite[Figure 5]{BalKas}. 
The black part of Figure \ref{fig:hplot} is their figure,
which we modify to include a red line representing the inequality \eqref{equ:ineq}, where points below the red line arise from polytopes with the Ehrhart positivity property.
\begin{figure}
\centering
\hfill
\subfigure[Polytopes with one interior point]{
\begin{tikzpicture}[trim axis left, trim axis right]
\begin{axis}[xmin=0,xmax=42,ymin=0,ymax=75,xlabel=$\h_1$,ylabel=$\h_2$,width=14cm,axis lines=middle,axis equal image,grid=both]

\addplot[black,mark=none,domain=0.5:38, thick]
{x} node[below,pos=0.5,yshift=22pt]{};

\addplot[black,dashed,mark=none,domain=0:38, thick]
{-x+70} node[below,pos=0.87]{};

\addplot[red,mark=none,domain=0.5:19, thick]
{2*1+2*x+11} node{};

\draw[black, thick] (axis cs:1,0) -- (axis cs:1,71) node [below,pos=0.94,xshift=9]{};
\pgfplotstableread{3_1.txt}\mydata;
\addplot [only marks,mark=*,mark options={scale=0.8,fill=white}] table [] {\mydata};

\pgfplotstableread{3s_1.txt}\mydata;
\addplot [color=black,only marks,mark=*,mark options={scale=0.8}] table [] {\mydata};
\end{axis}
\end{tikzpicture}
}
\hfill
\subfigure[Polytopes with two interior points]{
\begin{tikzpicture}[trim axis left, trim axis right]
\begin{axis}[xmin=0,xmax=65,ymin=0,ymax=111,xlabel=$\h_1$,ylabel=$\h_2$,width=14cm,axis lines=middle,axis equal image,grid=both]

\addplot[black,mark=none,domain=0:58, thick]
{x} node[below,pos=0.5,yshift=15pt]{};



\addplot[black,dashed,mark=none,domain=0:58, thick]
{-x+105} node[below,pos=0.87]{};

\addplot[red,mark=none,domain=0.5:30, thick]
{2*2+2*x+11} node{};


\draw[black, thick] (axis cs:2,0) -- (axis cs:2,105) node [below,pos=0.94,xshift=9]{};
\pgfplotstableread{3_2.txt}\mydata;
\addplot [only marks,mark=*,mark options={scale=0.5,fill=white}] table [] {\mydata};
\pgfplotstableread{3s_2.txt}\mydata;
\addplot [color=black,only marks,mark=*,mark options={scale=0.5}] table [] {\mydata};
\end{axis}
\end{tikzpicture}
}
\hfill\phantom{.}
     \caption{The plot of $(h_1^*, h_2^*)$ of $3$-dimensional polytopes with $1$ or $2$ interior lattice points.}
 \label{fig:hplot}
\end{figure}
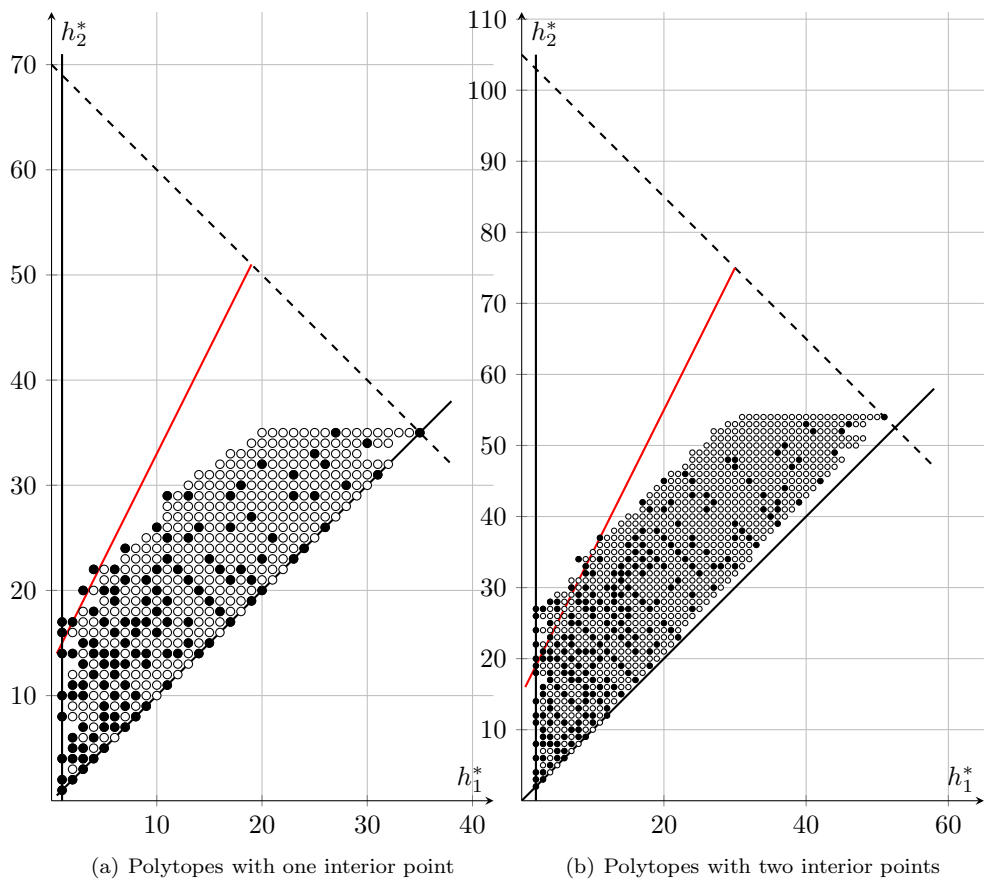
 Note that in each part of the figure, the big triangular area is bounded by three known inequalities for $h^*$-vectors. It is clear from the figure that these inequalities are far from optimal. Comparing the red line with the plotted data, one sees that a very high percentage of data points correspond to Ehrhart positive polytopes. However, if we only look at the triangular region (without the data points), then the area below the red line has a much lower percentage of the region. Therefore, improving the inequality bounds for $h^*$-vectors will be helpful in understanding the Ehrhart positivity problem, in particular, in giving a more accurate answer to Question \ref{ques:likely}.
 
 \bibliographystyle{amsplain}
\bibliography{gen}

\end{document}